\tikzstyle{my help lines}=[gray,
\tikzstyle{vertex}=[circle, draw, inner sep=0pt, minimum size=4pt]
\tikzstyle{vtx}=[circle, draw, inner sep=0pt, minimum size=8pt]
\newcommand{\Ratb}{\beta}
\newcommand{\Rata}{\alpha}
\theoremstyle{definition}
\newtheorem{theorem}{Theorem}[section]
\newtheorem{lemma}[theorem]{Lemma}
\newtheorem{proposition}[theorem]{Proposition}
\newtheorem{thm+def}[theorem]{Theorem and Definition}
\newtheorem{problem}[theorem]{Problem}
\newcommand{\tim}{\mathbf{s}}
\newcommand{\timtwo}{\mathbf{t}}
\newcommand{\C}{\mathbf{C}}
\newcommand{\R}{\mathbf{R}}
\newcommand{\Prob}{\mathbb{P}}
\newcommand{\E}{\mathbb{E}}
\newcommand{\D}{\mathbf{D}}
\newcommand{\Q}{\mathbf{Q}}
\newcommand{\CS}{$S_c^\ast$}
\newcommand{\CSB}{$S_c^B$}
\newcommand{\RSA}{$S_r^\ast$}
\newcommand{\RSB}{$S_r^B$}
\title{Tipsy cop and Tipsy robber: collisions of biased random walks on graphs}
\author{Pamela E.\ Harris}
\address[P.\ E.\ Harris]{Department of Mathematics, University of Wisconsin - Milwaukee, United States}
\email{\textcolor{blue}{\href{mailto:peharris@uwm.edu}{peharris@uwm.edu}}}
\thanks{P.~E.~Harris was supported by a Karen Uhlenbeck EDGE Fellowship.}
\author{Erik Insko}
\address[E.\ Insko]{Department of Mathematics, Florida Gulf Coast University, United States}
\email{\textcolor{blue}{\href{mailto:einsko@fgcu.edu}{einsko@fgcu.edu}}}
\author{Florian Lehner}
\address[F.\ Lehner]{Department of Mathematics, The University of Auckland, New Zealand}
\email{\textcolor{blue}{\href{mailto:mail@florian-lehner.net}{mail@florian-lehner.net}}}
\date{\today}
\begin{document}

\begin{abstract}
Introduced by Harris, Insko, Prieto Langarica, Stoisavljevic, and Sullivan, the \emph{tipsy cop and drunken robber} is a variant of the cop and robber game on graphs in which the robber simply moves randomly along the graph, while the cop moves directed towards the robber some fixed proportion of the time and randomly the remainder. 
In this article, we adopt a slightly different interpretation of tipsiness of the cop and robber where we assume that in any round of the game there are four possible outcomes: a sober cop move, a sober robber move, a tipsy (uniformly random) move by the cop, and a tipsy (uniformly random) move by the robber. 
We study this tipsy cop and tipsy robber game on the infinite grid graph and on certain families of infinite trees including $\delta$-regular trees  
and $\delta$-regular trees rooted to a $\Delta$-regular tree, where $\Delta \geq \delta$.   
Our main results analyze strategies for the cop and robber on these graphs. We conclude with some directions for further study.
\end{abstract}

\maketitle

\section{Introduction}

A \emph{random walk} on a graph $G$ is a stochastic process with random variables $(X_i)_{i \in \mathbb N}$ where $X_{i+1}$ is chosen among the neighbors of $X_i$ according to some predefined probability distribution. The study of random walks is a classical topic in discrete probability. 
For surveys on the study of random walks on graphs we refer the reader to \cite{survey1, survey2}. 
One question of interest is whether a given random walk is \emph{recurrent} (almost surely $X_i = X_0$ for infinitely many $i$) or \emph{transient} (almost surely  $X_i = X_0$ for finitely many $i$).
One of the first and most famous results in the study of recurrence and transience is P\'olya's theorem, stating that the simple random walk on $\mathbb Z^d$ is recurrent for $d \leq 2$, but transient for $d \geq 3$, or as Shizuo Kakutani succinctly summarized it in a talk: ``A drunk man will always find his way home, but a drunk bird may not.''\footnote{A joke by Shizuo Kakutani at a UCLA colloquium talk as attributed in Rick Durrett's book Probability: Theory and Examples.}

P\'olya describes his motivation for this result as follows, see \cite{Polya84}. He was taking a stroll in the woods when a young couple was walking in the same woods. They happened to cross paths so often that he felt embarrassed as he suspected they thought he was snooping, of which, he assures the reader that he was not.  
This incident caused him to ask how likely it is that two independent random walks meet infinitely often. In $\mathbb Z^d$ this reduces to the problem of recurrence or transience of a single random walk, which is precisely the outcome of P\'olya's theorem.

While recurrence and transience of random walks are certainly interesting properties to study, the story above suggests a different notion. Let us say that a random walk has the \emph{finite collision property} if two independent copies of it almost surely meet only finitely many times. As noted in \cite{KP04}, for simple random walks on vertex transitive graphs such as $\mathbb Z^d$ this notion is equivalent to transience, but there are examples of graphs where this is not the case. First examples of recurrent simple random walks with the finite collision property were studied by Krishnapur and Peres \cite{KP04}, for further results on the finite collision property of recurrent graphs see for instance \cite{BPS12,cwz08,chen2010,chen11,dgp18,hp15}.

There are also examples of transient random walks where two  or even arbitrarily many copies meet infinitely often \cite{chen16, dgp19}. The simplest such example is perhaps a biased random walk on $\mathbb Z$: the differences of two such walks yield a simple random walk on $\mathbb Z$ which is recurrent by P\'olya's theorem and thus returns to~$0$ infinitely often. One key property of these examples is that two independent copies almost surely `go to the same point at infinity' and move away from the starting vertex `at the same speed.'

In the examples mentioned above it was always assumed that the two random walks are independent and that their distributions are identical; in the present paper we drop both of these assumptions and study collisions between two biased random walks, where (roughly speaking) the first walk has a bias to reduce the distance between the positions of the two walks, whereas the second walk has a bias to increase this distance. 

As the title suggests, this situation can be modeled by a pursuit-evasion game similar to the cop-and-robber game, a two-player game introduced independently by Quilliot \cite{Q86} and by Nowakowski and Winkler \cite{NW83} in the 1980s. In this game, two players (called cop and robber) alternate in moving their playing pieces from vertex to vertex along the edges of a given graph. The cop wins if after finitely many steps both the cop and robber are at the same vertex. The monograph \cite{BN11} by Bonato and Nowakowski offers an extensive treatment of this game and numerous variants thereof.
 
A variant which is relevant to this paper is the cop and drunken robber game introduced by 
Kehagias and Pra\l at \cite{KP2012}, see also \cite{KW14}.
% and recently studied on the $n$-dimensional grid graph \cite{JKST21}. 
In this variant, the cop may follow some strategy, while the robber moves `drunkenly' according to a random walk on the graph. 
More recently a variant called the \emph{tipsy cop and drunken robber} was introduced \cite{BCI21,HIPSS20}, where in addition to the robber moving randomly, the cop only follows a strategy on some steps and performs random moves otherwise.

In the current work, we consider the case in which both players can follow a strategy on some (randomly selected) moves and make uniform random moves otherwise. We call this game the \emph{tipsy cop and tipsy robber} game. Roughly speaking, the tipsiness of each player governs the proportion of random moves of this player; we make this precise in Section \ref{section:background}. We note that if both players have tipsiness $1$, then all moves are random and thus the question of who wins the game is equivalent to the question whether two independent random walks almost surely collide. The other extreme case, when both players have tipsiness $0$, resembles the classical cop and robber game, but the two are not quite equivalent since in our game we choose randomly which player gets to move, and hence the moves of the two players do not necessarily alternate.

As in all two-player games, one question of interest is whether one of the two players has a winning strategy. Let us call a cop strategy winning if it almost surely leads to a win for the cop, and let us call a robber strategy winning if it leads to a win for robber with positive probability. The reason for this asymmetry in the definition of winning strategies is that (due to the randomness involved) there is always a positive probability that the cop wins the game before either of the two players gets to make a non-random move. It is worth noting that this observation also shows that on a finite graph, the cop will almost surely win the game, which is why we focus our attention on infinite graphs.

In Section~\ref{section:grid_strategy}, we study the game on the infinite Cartesian grid $\mathbb Z^2$. We show that the robber has a winning strategy if and only if the robber is less tipsy than the cop. 
In fact, if the robber is less tipsy, then it is not hard to show that the precise strategy does not matter so long as the distance between the robber and the cop increases on every non-random robber move.  
Perhaps surprisingly, the cop's winning strategies are more sensitive: there are strategies where each non-random cop move decreases the distance between the cop and the robber that still allow the robber to win with positive probability despite being more tipsy than the cop.

In Section~\ref{section:tree_strategy}, we analyze the game on certain families of infinite trees.
It is not hard to see that for regular trees, the question of who has a winning strategy boils down to a simple application of the gambler's ruin problem, but there are also trees exhibiting more interesting behavior.  
We consider the game on a family of infinite trees, $ \{ X(\Delta,\delta) \} $, where the tree $X(\Delta, \delta)$ is created by starting with a $\Delta$-regular base tree and attaching a copy of a $\delta$-regular tree to each node in this base tree.
These trees provide examples where the robber's optimal strategy does not necessarily increase the distance between the players: under certain conditions the robber's optimal strategy may be to backtrack toward the cop in order to reach the base tree, where the number of possible escape routes is higher.   
Interestingly, these conditions not only depend on the values $\Delta$ and $\delta$, but also on the tipsiness parameters.

We conclude the article in Section \ref{section:future} by providing some possible directions for further study.

\section{Background and general set up}\label{section:background}
\subsection{Markov chains and random walks}

A Markov chain with state space $V$ is a random process $(X_{\tim})_{\tim \in \mathbb N}$ where each $X_{\tim}$ takes values in some space $V$ such that
\[
\Prob[X_{\tim+1} = x \mid X_0 = x_0, X_1=x_1, \dots, X_{\tim} = x_{\tim}] = \Prob[X_{\tim+1} = x \mid X_{\tim} = x_{\tim}]
\]
whenever the event we condition on has positive probability. For a thorough introduction to the topic of denumerable Markov chains see \cite{Woess09}; in this section we briefly recall some well known facts. Let us say that the Markov chain \emph{is at $x$ at time step $\tim$} if $X_{\tim} = x$, and let us denote the \emph{transition probability} $\Prob[X_{\tim+1} = y \mid X_{\tim} = x]$ by $p_{xy;\tim}$, or sometimes (for readability reasons) by $p_{x,y; \tim}$. We also write $p_{xy,\tim}^n$ for the $n$-step transition probabilities, that is, $p_{xy,\tim}^n = \Prob[X_{\tim+n} = y \mid X_{\tim} = x]$. 
In order to make a Markov chain well defined, we also need to specify a distribution of the first random variable $X_0$ in the process. For the most part, we will assume that this distribution is concentrated on a single element of $V$. As usual, for an event $A$ we will denote by $\Prob_v[A] = \Prob[A \mid X_0=v]$, and  by $\E_v[A] = \E[A \mid X_0=v]$.

A Markov chain is called \emph{time homogeneous} if $p_{xy;\tim}$ does not depend on $\tim$; in this case we write $p_{xy}$ instead of $p_{xy;\tim}$, and $p_{xy}^n$ instead of $p_{xy;\tim}^n$. Most Markov chains that appear in this paper have this property. A Markov chain is called \emph{irreducible} if for every pair $x,y$ of elements of $V$ and every $\tim \in \mathbb N$ there is some $n$ such that the probability $p_{xy;\tim}^n$ that $\Prob[X_{\tim+n}=y\mid X_{\tim}=x] > 0$.

Let $G$ be a graph with vertex set $V(G)$ and edge set $E(G)$. A random walk on $G$ is a Markov chain with state space $V = V(G)$, where $p_{xy,\tim}$ is only allowed to be non-zero if $xy$ is an edge of $G$. When speaking about random walks on $\mathbb N$ or $\mathbb Z$, we always assume that edges are between elements whose difference is $1$. If $p_{xy} = \frac{1}{\deg x}$ for every pair $x,y$ of adjacent vertices, then we speak of a simple random walk; in this case the random walk is equally likely to transition from $x$ to any of its neighbors. Another well-established way to define transition probabilities for a random walk is by assigning a weight $w(e) >0$ to each edge $e$ of the graph. The transition probability from $x$ to $y$ in this case is given by 
\begin{align}\label{weights}
p_{xy} = \frac{w(xy)}{\sum_{z\sim x} w(xz)},
\end{align}
in other words, the probability of moving from $x$ to any of its neighbors is proportional to the weight of the corresponding edge. The simple random walk is a special case of this, where all edge weights are equal. We note that any random walk on a connected graph defined by edge weights as in \eqref{weights} is time homogeneous and irreducible.

For every $v \in V$, let $T_{v}^+ = \inf\{\tim > 0 \mid X_{\tim}=v\}$ be the random variable counting the number of steps it takes the Markov chain to reach $v$ (or return to $v$ if $X_0 = v$). We call $v \in V$ \emph{recurrent}, if $\Prob_{v}[T_{v}^+< \infty] = 1$, and \emph{transient} otherwise. If $v$ is recurrent, then we call it \emph{positive recurrent} if $\E_v[T_{v}^+]< \infty$, and \emph{null recurrent} if $\E_v[T_{v}^+]= \infty$. We call a Markov chain positive recurrent, null recurrent, or transient if all of its states have this property.

The following theorem which is \cite[Theorems 3.2 and 3.9]{Woess09} implies that a time homogeneous, irreducible Markov chain is either positive recurrent, or null recurrent, or transient. 

\begin{theorem}
\label{thm:mc-recurrent-transient}
Let $X_{\tim}$ be a time homogeneous, irreducible Markov chain with state space $V$, and let $v,x,y \in V$. The following (mutually exclusive) statements hold:
\begin{enumerate}
    \item The state $v$ is recurrent if and only if $\Prob_{x}[T_{y}^+< \infty] = 1$, which is the case if and only if $\Prob_{x}[X_{\tim} = y \text{ for infinitely many }\tim] = 1$. Moreover,
    \begin{enumerate}
        \item $v$ is positive recurrent if and only if $\E_x[T_{y}^+]< \infty$, and
        \item $v$ is null recurrent if and only if $\E_x[T_{y}^+]= \infty$.
    \end{enumerate}
    \item The state $v$ is transient if and only if $\Prob_x[T_{y}^+< \infty] < 1$, which is the case if and only if $\Prob_{x}[X_{\tim} = y \text{ for infinitely many }\tim] = 0$.
\end{enumerate}
\end{theorem}

In the remainder of this section we provide some well-known results about random walks and Markov chains. Most of these results can be deduced from results contained in textbook on these topics, see for instance \cite{LyonsPeres, woess08,Woess09}.

The first result we mention is Pólya's theorem on recurrence and transience of simple random walks on~$\mathbb Z^d$.

\begin{theorem}
The simple random walk on $\mathbb Z^d$ is null recurrent if $d \in \{1,2\}$ and transient if $d \geq 3$.
\end{theorem}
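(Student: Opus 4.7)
The plan is to use the standard Green-function criterion for recurrence of an irreducible Markov chain: the chain is recurrent if and only if $\sum_{n \geq 0} p^{(n)}_{oo} = \infty$, where $p^{(n)}_{oo}$ is the $n$-step return probability to the origin. This reduces both assertions to asymptotic estimates of the return probabilities $p^{(2n)}_{oo}$ in each dimension (return to $0$ can only occur after an even number of steps).

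First I would handle $d=1$. A walk returns to the origin at time $2n$ precisely when the number of left and right steps are equal, giving $p^{(2n)}_{oo} = \binom{2n}{n} 2^{-2n}$. By Stirling's formula this is asymptotic to $(\pi n)^{-1/2}$, so the series diverges and the walk is recurrent.

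For $d=2$, the cleanest route is the well-known $45^\circ$-rotation trick: the pair $(U_n,V_n) = (X_n+Y_n, X_n-Y_n)$, where $(X_n,Y_n)$ is the simple random walk on $\mathbb Z^2$, consists of two independent simple random walks on $\mathbb Z$, and $(X_n,Y_n)=(0,0)$ iff $U_n=V_n=0$. Hence $p^{(2n)}_{oo} = \left(\binom{2n}{n} 2^{-2n}\right)^2 \sim (\pi n)^{-1}$, so the series again diverges. For $d \geq 3$, I would estimate $p^{(2n)}_{oo}$ by expanding the multinomial expression
\[
p^{(2n)}_{oo} = (2d)^{-2n} \sum_{k_1 + \dots + k_d = n} \binom{2n}{k_1,k_1,\dots,k_d,k_d},
\]
bound the inner sum using the largest multinomial coefficient (which corresponds to equal $k_i$'s, giving a factor $\sim n^{-(d-1)/2}$ after Stirling) against $\binom{2n}{n}$, and conclude $p^{(2n)}_{oo} = O(n^{-d/2})$. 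Since $\sum n^{-d/2}$ converges for $d \geq 3$, the walk is transient.

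Finally, to upgrade recurrence to null recurrence in dimensions $1$ and $2$, I would invoke the fact that a positive recurrent irreducible Markov chain admits a stationary probability distribution. The simple random walk on $\mathbb Z^d$ is vertex-transitive, so any stationary measure must be constant on vertices; on the infinite vertex set $\mathbb Z^d$ such a measure cannot be a probability distribution. Hence $\mathbb E[T_{oo}] = \infty$, completing the proof. I expect the main technical nuisance to be the multinomial estimate in the $d \geq 3$ case; the rest is asymptotic bookkeeping with Stirling's formula together with the elementary rotation argument for $d=2$.
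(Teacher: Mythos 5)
The paper states this as classical background (P\'olya's theorem) and gives no proof, citing standard textbooks, so there is no in-paper argument to compare against; judged on its own, your proposal is correct and is essentially the canonical combinatorial proof. The Green-function criterion, the Stirling estimate $\binom{2n}{n}2^{-2n}\sim(\pi n)^{-1/2}$ for $d=1$, the $45^\circ$-rotation giving two independent one-dimensional walks for $d=2$, and the multinomial bound $p^{(2n)}_{oo}=O(n^{-d/2})$ via $\binom{2n}{k_1,k_1,\dots,k_d,k_d}=\binom{2n}{n}\binom{n}{k_1,\dots,k_d}^2$ together with the maximal multinomial coefficient are all sound. One small point to tighten in the null-recurrence step: vertex-transitivity by itself does not force a stationary measure to be constant (translation-covariance only forces it to be a multiplicative exponential); the cleaner route is to observe that the counting measure is stationary by reversibility, that an irreducible recurrent chain has a stationary measure unique up to scaling, and that the counting measure on the infinite set $\mathbb Z^d$ is not normalizable, whence no stationary probability distribution exists and the chain cannot be positive recurrent. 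With that adjustment the argument is complete.
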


The next result is \cite[Exercise 2.1 (f)]{LyonsPeres}.

\begin{theorem}
\label{thm:rw-edgeweights}
A random walk on a graph defined by edge weights as in \eqref{weights} is positive recurrent if and only if the sum of the edge weights is finite.
\end{theorem}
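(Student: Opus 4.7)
The plan is to exploit the reversibility of the random walk defined by edge weights and appeal to the standard equivalence between positive recurrence and the existence of a stationary probability distribution for irreducible Markov chains. Concretely, I would first verify that the measure $\pi$ on $V(G)$ defined by
\[
\pi(a) = \sum_{b \sim a} w(ab)
\]
is stationary, which amounts to checking the detailed balance equations $\pi(a) p_{ab} = \pi(b) p_{ba}$; both sides equal $w(ab)$ by \eqref{weights}, so this is immediate, and summing the detailed balance identity over $a$ gives $\pi = \pi P$.

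Next I would invoke the standard structure theorem for irreducible, time homogeneous Markov chains (noted in the excerpt as the setting we are in whenever the graph is connected): such a chain is positive recurrent if and only if it admits a stationary probability measure, and in that case the stationary probability measure is unique and is a scalar multiple of any stationary measure. Since $\pi$ above is a stationary measure, the chain is positive recurrent exactly when $\pi$ has finite total mass and can therefore be normalized.

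The final step is the bookkeeping computation
\[
\sum_{a \in V(G)} \pi(a) = \sum_{a \in V(G)} \sum_{b \sim a} w(ab) = 2 \sum_{e \in E(G)} w(e),
\]
where each edge is counted once from each endpoint. Thus $\pi(V(G)) < \infty$ if and only if $\sum_{e \in E(G)} w(e) < \infty$, which combined with the previous step gives the claimed equivalence.

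The only mildly delicate point is the uniqueness half of the structure theorem: one needs to rule out the possibility that a different, unrelated stationary probability measure exists while $\pi$ itself has infinite mass. This is handled by the standard fact that for irreducible chains all stationary measures are proportional, so if any stationary probability measure exists then $\pi$ must have finite mass. Since the paper explicitly allows citing results from \cite{LyonsPeres, woess08, Woess09}, this can be used as a black box rather than reproved, making the remaining argument essentially the two-line computation above.
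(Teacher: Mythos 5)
Your proof is correct and is the standard argument via reversibility; the paper itself does not prove this statement but simply cites it as \cite[Exercise 2.1 (f)]{LyonsPeres}, so there is no in-paper proof to compare against. The one point I would tighten is your appeal to proportionality of stationary measures: for a general irreducible chain this uniqueness can fail in the transient case (a biased walk on $\mathbb Z$ has non-proportional stationary measures), and the correct statement is that stationary measures are unique up to scaling for irreducible \emph{recurrent} chains. This does not create a gap in your argument, because the existence of a stationary probability measure already forces positive (hence ordinary) recurrence, after which uniqueness up to scaling applies and shows that $\pi(a)=\sum_{b\sim a}w(ab)$ has finite total mass $2\sum_{e}w(e)$, exactly as you conclude.
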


The next result is a direct consequence of \cite[Theorem 2.16]{LyonsPeres}.

\begin{theorem}%[Consequence of Rayleigh's monotonicity principle] 
\label{thm:RMP}
Let $w$ and $w'$ be weight functions on $E(G)$ and assume that there are constants $c_1>0$ and $c_2$ such that $c_1 w(e) \leq w'(e) \leq c_2 w(e)$ for every $e \in E(G)$. The random walk defined by the weights $w$ is recurrent if and only if the simple random walk $w'$ on $G$ is recurrent.
\end{theorem}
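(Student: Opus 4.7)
The plan is to use the electrical network interpretation of the weighted random walks in \eqref{weights}. Assigning conductance $c(e) = w(e)$ to each edge of $G$ produces an electrical network whose associated random walk coincides with the walk defined by $w$, while the simple random walk on $G$ corresponds to constant conductance $1$. A classical criterion says such a walk is recurrent if and only if the effective resistance $\mathcal R(o \leftrightarrow \infty)$ from the starting vertex $o$ to infinity is infinite, so the theorem reduces to showing that the effective resistances associated to $w$ and to the constant-$1$ weighting are simultaneously infinite.

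Setting $r_w(e) = 1/w(e)$ and $r_{w'}(e) = 1/w'(e)$, the hypothesis $c_1 w(e) \leq w'(e) \leq c_2 w(e)$ translates to $\tfrac{1}{c_2}\, r_w(e) \leq r_{w'}(e) \leq \tfrac{1}{c_1}\, r_w(e)$. The main step is then to invoke Rayleigh's monotonicity principle, according to which effective resistance is monotone in edge resistances and scales linearly under a uniform multiplicative rescaling of them. Applying this on each stage of a finite exhaustion $G_1 \subset G_2 \subset \cdots$ of $G$ and passing to the limit yields
\[
\tfrac{1}{c_2}\, \mathcal R_w(o \leftrightarrow \infty) \;\leq\; \mathcal R_{w'}(o \leftrightarrow \infty) \;\leq\; \tfrac{1}{c_1}\, \mathcal R_w(o \leftrightarrow \infty),
\]
so one effective resistance is infinite exactly when the other is, and the criterion recalled above closes the proof.

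The main obstacle is not conceptual: once the electrical-network dictionary and Rayleigh monotonicity are in hand, the whole argument reduces to the sandwich above. The only point requiring care is making sure the finite-graph comparison survives the limit procedure defining effective resistance to infinity, which is exactly the technical content of \cite[Theorem 2.16]{LyonsPeres} cited in the statement; accordingly, the theorem is in effect a direct corollary of that result combined with the standard recurrence criterion.
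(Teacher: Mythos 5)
Your proposal is correct and matches the paper's treatment: the paper gives no independent argument but derives the statement as a direct consequence of \cite[Theorem 2.16]{LyonsPeres}, and your effective-resistance/Rayleigh-monotonicity sandwich is exactly the standard proof of that cited comparison theorem. The only cosmetic point is that your sandwich establishes recurrence of the $w$-walk if and only if recurrence of the $w'$-walk, which is the intended content; the reference to the simple random walk in the statement's conclusion is recovered by taking one of the two weight functions to be bounded above and below by constants times the constant function $1$.
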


The last few results concern biased random walks on $\mathbb N$. These are random walks where the transition probability $p_{n,n+1}$ is $p$, the transition probability $p_{n,n-1}$ is $q \leq 1-p$,  the transition probability $p_{n,n}$ is $1-p-q$ for every $n >0$, and $p_{0,1}=1$. The case $q=1-p$ where $p_{n,n}=0$ is of particular interest. Variants of the following well-known result can be found in many textbooks on probability theory, the case $p + q=1$ which (by a coupling argument) implies the general case can for instance be found in \cite[Theorem 5.9, Example~5.10]{Woess09}.

\begin{theorem}[Gambler's ruin]
\label{thm:gamblersruin}
Let $X_{\tim}$ be a random walk on $\mathbb N$ with transition probabilities $p_{0,1}>0$, $p_{0,0} = 1-p_{0,1}$, and $p_{n,n+1} = p > 0$, $p_{n,n-1}=q>0$ and  $p_{n,n}=1-p-q$ for $n > 0$.
\begin{itemize}
    \item If $p> q$, then the random walk is transient.
    \item If $p < q$, then the random walk is positive recurrent.
    \item If $p=q$, then the random walk is null recurrent.
\end{itemize}
\end{theorem}

\begin{proposition}
\label{prp:occupationmeasure}
\label{prp:zerobeforeincrease}
Let $X_{\tim}$ be a random walk on $\mathbb N$ with the same transition probabilities as in Theorem \ref{thm:gamblersruin}, and assume that $p<q$, and that $p_{0,0}>0$. 
\begin{enumerate}
    \item There is a probability measure $\nu$ on $\mathbb Z$ such that $\lim_{\tim \to \infty} \Prob[X_{\tim} = x] = \nu(x)$ for every $x$.
    \item Almost surely $\lim_{\tim \to \infty} \frac{1}{\tim} |\{\timtwo < \tim \mid X_{\timtwo} = x\} = \nu(x)$.
    \item There is an absolute constant $c$ such that $\Prob_n[T_0^+ < T_{n}^+]>c$.
\end{enumerate}
\end{proposition}
\begin{proof}
The first part follows from \cite[Theorem 3.48]{Woess09}, the second part from \cite[Theorem 3.55]{Woess09}.
For the third part, note that a random walk $Y_{\tim}$ with transition probabilities $p_{n,n+1} = q$, $p_{n,n-1}=p$ is transient and therefore has a positive probability of never returning to $n$. In particular, for such a random walk $\Prob_n[T_{2n}^+ < T_{n}^+] \geq \Prob_n[T_{n}^+ = \infty] =: c$, and the result follows by the obvious coupling of $X_{\tim}$ and $Y_{\tim}$, that is, the coupling where $X_{\tim}-X_{\tim-1}= Y_{\tim-1} - Y_{\tim}$ until $X_{\tim} =0$ for the first time, and the two processes are independent afterwards.
\end{proof}

Theorem \ref{thm:gamblersruin} also holds for reasonably well-behaved inhomogeneous random walks.

\begin{proposition}
\label{prp:inhomogeneous_recurrent}
Let $X_{\tim}$ be a random walk on $\mathbb N$ whose transition probabilities are $p_{n,n+1;\tim} = p_{\tim}$, $p_{n,n-1;\tim} = q_{\tim}$ and  $p_{n,n;\tim} = 1-p_\tim-q_{\tim}$ for all $n > 0$, and $p_{0,1;\tim} > 0$. Let $Y_{\tim}$ be a random walk on $\mathbb N$ with constant transition probabilities $p_{n,n+1;\tim} = p$, $p_{n,n-1;\tim} = q$, and $p_{n,n;\tim}=1-p-q$ for all $n > 0$, and $p_{0,1} = 1$. Assume that $X_0$ and $Y_0$ have the same distribution.
\begin{enumerate}
    \item If $p_{\tim} \leq p$ and $q_{\tim} \geq q$, for all $\tim$, then there is a coupling of $X_\tim$ and $Y_\tim$ for which $X_\tim \leq Y_\tim+1$. In particular $\Prob_x[X_\tim \leq y+1] \geq \Prob_x[Y_\tim \leq y]$ for all $x,y \in \mathbb N$ and all $\tim$, and if $Y_{\tim}$ is (positive) recurrent, then $X_{\tim}$ is also (positive) recurrent.
    \item If $p_{\tim} \geq p$, $q_{\tim} \leq q$ and $p_{0,1;\tim} =1$ for all $\tim$, then there is a coupling of $X_\tim$ and $Y_\tim$ for which $X_\tim \geq Y_\tim-1$. In particular $\Prob_x[X_\tim \leq y-1] \leq \Prob_x[Y_\tim \leq y]$ for all $x,y \in \mathbb N$ and all $\tim$. If $Y_{\tim}$ is transient, then $X_{\tim}$ is also transient.
\end{enumerate}
%If $p_{\tim}\leq q_{\tim}$ for all $\tim \in \mathbb N$, then almost surely $X_{\tim} = 0$ for infinitely many $\tim$. Conversely, if there is some $\epsilon > 0$ such that $p_{\tim} \geq q_{\tim}+\epsilon$ for all $\tim \in \mathbb N$, then there is a positive probability that there is no $\tim > 0$ such that $X_{\tim} = 0$.
\end{proposition}
\begin{proof}
    Note that given $X_0$, we can view $X_\tim$ as a function a sequence $U_\tim$ of i.i.d\ uniform random variables on $[0,1)$ as follows. If $X_\tim = n$, then
    \[
     X_{\tim+1} =
     \begin{cases}
         n+1 &\text{if }U_{\tim} < p_{n,n+1;\tim}\\
         n &\text{if } p_{n,n+1;\tim} \leq U_{\tim;\tim} < 1- p_{n,n-1;\tim}\\
         n-1 &\text{otherwise,}
     \end{cases}
    \]
    where we set $p_{0,-1;\tim} = 0$.
    Similarly, given $Y_0$, we can view $Y_\tim$ as a function a sequence $U_\tim$ of i.i.d\ uniform random variables on $[0,1)$.

    Consider a coupling of $X_\tim$ and $Y_\tim$ with $X_0=Y_0$ in which $X_\tim$ and $Y_\tim$ are functions of the same sequence $U_\tim$. It can be seen by induction on $\tim$ that this coupling satisfies the claimed properties in both cases. 
\end{proof}

Finally, we note that more precise statements about the behavior of the random walk from Theorem \ref{thm:gamblersruin} can be made in both the positive recurrent and the transient case. 
Note that random walks on $\mathbb Z$ can be interpreted as sums of random variables each taking values in $\{-1,0,1\}$. If these random variables are i.i.d., then their sum is well-understood. 

We will need the following concentration result.

\begin{theorem}
\label{thm:concentration}
Let $X_i$ be a sequence of i.i.d.\ random variables each of which is concentrated on a finite interval. Let $S_n = \sum_{i \leq n} X_i$, let $M_n = \min_{k\leq n} S_k$, and let $\mu = \E[X_i]$. There is a constant $c > 0$  such that  for every $\epsilon > 0$ 
\[
    \Prob[|S_n-n\mu| >  n \epsilon] <  e^{-c \epsilon^2 n}.
\]
If $\mu < 0$, then there are constants $a,b > 0$ such that for every $\epsilon$
\[
    \Prob[M_n <  (\mu - \epsilon)n] <  a e^{-b \epsilon^2 n}.
\]
\end{theorem}

\begin{proof}
The first part follows from the Azuma-Hoeffding inequality \cite[Theorem 13.2]{LyonsPeres} applied to the two random variables $X_i - \mu$ and $\mu - X_i$.

For the second part note that
\[
\Prob[\min_{k\leq n} S_k < (\mu - \epsilon)n] \leq \sum_{k=0}^n \Prob[S_k - \mu k < \mu (n - k) - n\epsilon] \leq \sum_{k=0}^n \Prob[|S_k - \mu k| > n\epsilon].
\]
Applying the first part to the summands gives
\[
\Prob[\min_{k\leq n} S_k < (\mu - \epsilon)n] < \sum_{k=0}^n e^{-ck (n\epsilon / k)^2} \leq 
\sum_{k=0}^n e^{-c n \epsilon^2} = n e^{-cn\epsilon^2},
\]
and the desired result follows by taking $b<c$ and $a$ large enough.
\end{proof}

\begin{lemma}
\label{lem:transientonN}
Let $X_{\tim}$ be a random walk on $\mathbb N$ with $p_{0,0}<1$, $p_{0,1}= 1-p_{0,0}$, and  $p_{n,n+1} = p$,  $p_{n,n-1} = q < p$, and $p_{n,n} = 1- p-q$ for every $n > 0$. 

Then there are positive constants $k$, $a$, and $b$ such that $\Prob_n[X_{\tim}<n+k\tim] < a \cdot e^{-b\tim}$ for every $n \geq 0$. Moreover, $\Prob_n[\exists \tim_0 \colon \forall \tim \geq \tim_0 \colon X_{\tim} \geq n+k\tim] =1$, and for every $\epsilon> 0$ we can find $n_0 \in \mathbb N$ such that $\Prob_n[\forall \tim \geq 0 \colon X_{\tim} \geq n-n_0+k\tim] \geq 1-\epsilon$.
\end{lemma}

\begin{proof}
Let $Y_{\tim}$ be a random walk on $\mathbb Z$ with transition probabilities $p_{n,n+1} = p$ and $p_{n,n-1} = q$. Consider a coupling of $X_{\tim}$ and $Y_{\tim}$ such that $Y_0 = X_0$ and $Y_{\tim+1}-Y_{\tim} = X_{\tim+1} -  X_{\tim}$, whenever $X_{\tim} >0$. Let $R_{\tim} = |\{\timtwo \leq \tim \colon X_{\timtwo} = 0\}|$, and let $R_{\infty} = |\{\timtwo \in \mathbb N \colon X_{\timtwo} = 0\}|$.

Clearly, $Y_{\tim} - 2 R_{\tim} \leq X_{\tim} $. The differences $Y_{\tim} - Y_{\tim-1}$ are i.i.d.\ and Theorem \ref{thm:concentration} implies that there is some $k_0$ and $c$ such that $\Prob_n[Y_{\tim}<k_0\tim] < e^{-c\tim}$. Further note that 
%$\Prob_1[\forall \tim > 0\colon X_{\tim} > 1] >0$ by Theorem \ref{thm:gamblersruin}. Hence 
$X_{\tim}$ is transient and thus $\Prob_0[T_0^+ < \infty] = p < 1$. This implies that there is some constant $c'$ such that $\Prob_0[R_{\infty} \geq x] =(1-p)^x \leq e^{-c'x}$ for every $x$.

Hence for every $\tim > \tim_0$ we have
\begin{align*}
  \Prob_n[X_{\tim}<n+k_0\tim/3] 
  &\leq \Prob_n[Y_{\tim}<k_0\tim] + \Prob_n[R_{\tim}>k_0\tim/3]\\
  &\leq \Prob_n[Y_{\tim}<k_0\tim] + \Prob_0[R_{\infty}>k_0\tim/3]\\
  &\leq e^{-c\tim} + e^{-c'\tim/3}.
\end{align*}
Taking $k = k_0/3$, $b = \min (c,c'/3)$, and $a$ such that $a\cdot e^{-b\tim} > 1$ for $\tim < \tim_0$ yields the desired result.

The statement $\Prob_n[\exists \tim_0 \colon \forall \tim \geq \tim_0 \colon X_{\tim} \geq n+k\tim] =1$ follows from the Borel--Cantelli lemma because $\sum_{\tim \geq 0} a \cdot e^{-b\tim} < \infty$. Finally note that $\Prob_n[\forall \tim \geq 0 \colon X_{\tim} \geq n-n_0+k\tim] \geq \Prob_n[\forall \tim \geq n_0 \colon X_{\tim} \geq n+k\tim]$, and the latter probability tends to 1 as $n_0$ tends to infinity (because the probability of the complementary event is bounded above by the tail of a convergent series).
\end{proof}

On the other hand, in the positive recurrent case, one can show that the random walk will spend a large proportion of steps `close to $0$'. 

\begin{lemma}
\label{lem:positiverecurrentonN}
Let $X_{\tim}$ be a random walk on $\mathbb N$ with $p_{0,0}<1$, $p_{0,1}= 1-p_{0,0}$, and  $p_{n,n+1} = p$,  $p_{n,n-1} = q > p$, and $p_{n,n} = 1- p-q$ for every $n > 0$. Let $U_{\tim} = |\{\timtwo \leq \tim \colon X_{\timtwo} = X_{\timtwo - 1} = 0\}$.

Then for every $\epsilon > 0$ there are positive constants $a$ and $b$ such that
\[
\Prob_n\left[\left(\frac{(q-p)(1-p_{0,1})}{p_{0,1}+q-p} -\epsilon\right)\tim < U_{\tim} < \left(\frac{(q-p)(1-p_{0,1})}{p_{0,1}+q-p} +\epsilon\right)\tim\right]> 1-ae^{-b\tim}.
\]
\end{lemma}

\begin{proof}
Let $Y_{\tim}$ be a random walk with $\Prob[Y_0=0]=1$ and transition probabilities $p_{n,n}^Y = p_{0,0}$ and  $p_{n,n+1}^Y= p_{0,1}$, and let  $Z_{\tim}$ a random walk with $\Prob[Z_0=0]=1$ and transition probabilities $p_{n,n+1}^Z = p$,  $p_{n,n-1}^Z= q$, and  $p_{n,n}^Z= 1-p-q$. 

Consider the following coupling of $X_{\tim}$ with $Y_{\tim}$ and $Z_{\tim}$. Denote by $f_y(\tim) = |\{\timtwo < \tim \colon X_{\timtwo}=0\}|$ and by $f_z(\tim) = |\{\timtwo < \tim \colon X_{\timtwo}>0\}|$. If $X_{\tim} = 0$, then $X_{\tim+1}-X_{\tim} = Y_{f_y(\tim)+1}-Y_{f_y(\tim)}$; if $X_{\tim} > 0$, then $X_{\tim+1}-X_{\tim} = Z_{f_z(\tim)+1}-Z_{f_z(\tim)}$.

We note that by Theorem \ref{thm:concentration}, for every $\epsilon >0$ there are positive constants $a,b$ such that the following holds:
\begin{align}
    \Prob[(p_{0,1}-\epsilon)f_y(\tim) <Y_{f_y(\tim)} < (p_{0,1}+\epsilon)f_y(\tim)] &> 1-ae^{-bf_y(\tim)},\label{eq:boundy}\\
    \Prob[(p-q-\epsilon)f_z(\tim) <Z_{f_z(\tim)} < (p-q+\epsilon)f_z(\tim)] &> 1-ae^{-bf_z(\tim)},\label{eq:boundz}\\
    \Prob[(p-q-\epsilon)f_z(\tim) < \min_{\timtwo < \tim}Z_{f_z(\timtwo)}] &> 1-ae^{-bf_z(\tim)}.\label{eq:boundminz}
\end{align}

We further note that $Y_{f_z(\tim)}$ reaches a new maximum when $X_{\tim-1}=0$ and $X_{\tim}=1$, and $Z_{f_z(\tim)}$ reaches a new minimum when $X_{\tim-1}=1$ and $X_{\tim}=0$. Hence
\begin{equation}
\min_{\timtwo \leq \tim }Z_{f_z(\timtwo)} + X_0 = - Y_{f_y(\tim)}\leq  Z_{f_z(\tim)} + X_0. \label{eq:yandz}
\end{equation}
Thus for every $\epsilon > 0$ there is some positive constant $c$ (depending on $\epsilon$) and $\tim_0$ (depending on $X_0$ and $\epsilon$) such that for every $\tim > \tim_0$
\begin{equation*}
\Prob\left[\left(\frac{q-p}{p_{0,1}} -\epsilon\right)f_z(\tim) < f_y(\tim) < \left(\frac{q-p}{p_{0,1}} +\epsilon\right)f_z(\tim)\right] > 1-ae^{-bf_y(\tim)} -ae^{-bf_z(\tim)},
\end{equation*}
and since $\tim = f_y(\tim) + f_z(\tim)$ this is equivalent to
\begin{equation}
\Prob\left[\left(\frac{q-p}{p_{0,1}+q-p} -\epsilon\right)\tim < f_y(\tim) < \left(\frac{q-p}{p_{0,1}+q-p} +\epsilon\right)\tim\right] > 1-ae^{-bf_y(\tim)} -ae^{-b(\tim - f_y(\tim))}.\label{eq:boundfy}
\end{equation}

We would like to bound the right side of this inequality by something that only depends on $\tim$. To this end, note that $f_z(\tim) \geq - \min_{\timtwo < \tim}Z_{f_z(\timtwo)}$. Combining this with \eqref{eq:boundy} and \eqref{eq:yandz}, for  suitable positive constants $a$ and $b$ we have that
\begin{align*}
   \Prob\left[ f_z(\tim) - X_0 > \frac{p_{0,1} - \epsilon}{2} \tim \mid f_y(\tim) \geq \frac{\tim}2\right] \geq 
    \Prob\left[-\min_{\timtwo < \tim}Z_{f_z(\timtwo)} - X_0 > \frac{p_{0,1} - \epsilon}{2} \tim \mid f_y(\tim) \geq \frac{\tim}2\right] > 1 - ae^{-b\frac{\tim}2}.
\end{align*}

Similarly, note that ${f_y(\tim)}\geq Y_{f_y(\tim)}$. Together with \eqref{eq:boundz} and \eqref{eq:yandz} we thus obtain
\begin{align*}
    \Prob\left[{f_y(\tim)} + X_0 > \frac{(q-p+\epsilon)}{2} \tim \mid f_z(\tim) \geq \frac{\tim}2\right] \geq 
    \Prob\left[-Y_{f_y(\tim)} - X_0 < \frac{(p-q-\epsilon)}{2} \tim \mid f_z(\tim) \geq \frac{\tim}2\right] > 1 - ae^{-b\frac{\tim}2}.
\end{align*}

Since $\tim = f_y(\tim) + f_z(\tim)$ one of the two must be at least $\frac{\tim}{2}$; we conclude that there are positive constants $a$, $b$, and $\alpha$ such that 
\begin{equation*}
    \Prob[\min (f_y(\tim),f_z(\tim)) \leq \alpha \tim] <  ae^{-b\tim}.
\end{equation*}
Combining this with \eqref{eq:boundfy}, we obtain that there are positive constants $a$ and $b$ (depending on $X_0$) such that for every $\tim$
\begin{equation*}
\Prob\left[\left(\frac{q-p}{p_{0,1}+q-p} -\epsilon\right)\tim < f_y(\tim) < \left(\frac{q-p}{p_{0,1}+q-p} +\epsilon\right)\tim\right]> 1-ae^{-b\tim}.
\end{equation*}
Equation \eqref{eq:boundy} together with the observation that $U_\tim = f_y(\tim) - Y_{f_y(\tim)}$ yields the desired result.
\end{proof}

\subsection{The tipsy cop and robber game}

In this paper, we will study pairs $\C_{\tim},\R_{\tim}$ of random walks, where the transition probabilities in each step depend on the current relative positions of the random walks. Roughly speaking, the random walk $\C_{\tim}$ should be biased to maximize the probability that the two walks meet, whereas the random walk $\R_{\tim}$ should aim to minimize the meeting probability.

More precisely, let $G$ be a connected, locally finite graph. We have four parameters $p_c^s$, $p_c^t$, $p_r^s$, and $p_r^t$ satisfying the condition $ p_c^s+p_c^t+p_r^s+p_r^t =1$, and a pair of \emph{strategy functions}  $S_c$ and $S_r$, which map each pair in $V(G) \times V(G)$ to a probability distribution where $S_c(u,v)$ is concentrated on the neighborhood of $u$, and $S_r(u,v)$ is concentrated on the neighborhood of $v$. 
%By a slight abuse of notation, we write $S_c(u,v)$ and $S_r(u,v)$ for a random vertex chosen according to these distributions; this is justified by the fact that 
We note that most strategy functions we consider are deterministic, that is, they assign probability 1 to one vertex and probability 0 to all other vertices. 

Given the above parameters, we define transition probabilities
\[
    p_{(u,v),(u',v')} =
    \begin{cases}
        p_c^s \cdot S_c(u,v) (u') + p_c^t \cdot \frac{1}{\deg u} & \text{if }u' \in N(u) \text{ and } v'=v, \\
        p_r^s \cdot S_r(u,v) (v') + p_c^t \cdot \frac{1}{\deg v} & \text{if }u' =u \text{ and } v' \in N(v), \\
        0 & \text{otherwise,}
    \end{cases}
\]
where $S_c(u,v)(x)$ and $S_r(u,v)(x)$ denote the probability of the vertex $x$ with respect to the distributions $S_c(u,v)$ and $S_r(u,v)$, respectively. 
We note that this defines a Markov chain with state space $V(G) \times V(G)$ whose transition probabilities depend on $p_c^s$, $p_c^t$, $p_r^s$, and $p_r^t$, as well as on the strategy functions. We will be interested in the probability of the event that there is some time step $\tim\in \mathbb N$ for which $\C_{\tim}$ and $\R_{\tim}$ coincide.

It will be convenient to consider $(\C_{\tim},\R_{\tim})$ as a function of $(\C_{0},\R_{0})$ together with an i.i.d.\ process. Let $M_{\tim}$ and $X_{\tim}$ be i.i.d.\ random variables, where $M_{\tim}$ is chosen from $\{\mathrm{cs} ,\mathrm{rs} ,\mathrm{ct} ,\mathrm{rt} \}$ with $\Prob[M_{\tim} = xy] = p_x^y$, and $X_{\tim}$ is uniform on the unit interval $[0,1)$. Given an initial position $\Q_{0}$ we define $\Q_{\tim}$ as functions of $M_{\tim}$ and $X_{\tim}$ as follows.
Let $\Q_{\tim-1}=(u,v)$, and fix an enumeration $u_1, \dots , u_{\deg u}$ of the neighbors of $u$ and $v_1, \dots, v_{\deg v}$ of the neighbors of $v$. We note that we allow these enumerations to depend on the pair $(u,v)$. In particular, the order of the neighbors of $u$ with respect to the pair $(u,v)$ might not be the same as the order with respect to the pair $(u,v')$. For each $i$ let 
$x_{c,i} = \sum_{j=1}^i S_c(u,v)(u_i)$, and let 
$x_{r,i} = \sum_{j=1}^i S_r(u,v)(v_i)$. With these definitions we can define
\[
    \Q_{\tim} = 
    \begin{cases}
        (u_i,v) & \text{if } (M_{\tim} = \mathrm{cs} \text { and } x_{c,i-1} \leq X_{\tim} < x_{c,i}) \text{ or }
        (M_{\tim} = \mathrm{ct}  \text { and } \frac{i-1}{\deg u} \leq X_{\tim} < \frac{i}{\deg u}),
        \\
        (u,v_i) & \text{if } (M_{\tim} = \mathrm{rs}  \text { and } x_{r,i-1} \leq X_{\tim} < x_{r,i}) \text{ or }
        (M_{\tim} = \mathrm{rt}  \text { and } \frac{i-1}{\deg v} \leq X_{\tim} < \frac{i}{\deg v}). 
    \end{cases}
\]
Clearly $\Prob[\Q_{\tim} = (u',v')\mid \Q_{\tim-1} = (u,v)] = p_{(u,v),(u',v')}$, where $p_{(u,v),(u',v')}$ is the transition probability defined above, and $\Prob[\Q_{\tim} = q_{\tim}\mid \Q_{\tim-1} = q_{\tim-1}] = \Prob[\Q_{\tim} = q_{\tim}\mid \Q_{\tim-1} = q_{\tim-1},  \dots, \Q_{0} = q_0]$. Hence if $\Q_0$ and $(\C_0,\R_0)$ have the same distribution, then so do the random processes $\Q_{\tim}$ and $(\C_{\tim},\R_{\tim})$. 

Throughout the rest of this paper we will identify $(\C_{\tim},\R_{\tim})$ with the process $\Q_{\tim}$ defined above.
Besides being able to use the auxiliary random variables $M_{\tim}$ and $X_{\tim}$ in our proofs, this has the additional advantage of giving a natural coupling of the processes $(\C_{\tim},\R_{\tim})$ and $(\C_{\tim}',\R_{\tim}')$ on the same graph for which the parameters $p_c^s$, $p_c^t$, $p_r^s$, and $p_r^t$ coincide, but the strategy functions differ. Simply consider both of them as functions of the same sequence of hidden variables $M_{\tim}$ and $X_{\tim}$ with the same initial positions. We will refer to this as the \emph{standard coupling} of  $(\C_{\tim},\R_{\tim})$ and $(\C_{\tim}',\R_{\tim}')$.

We will think of the process defined above as a pursuit-evasion game on a graph which we call the tipsy cop and robber game. This game is played between two players, the cop and the robber, each of whom controls a playing piece. A move by either player consists of taking their respective playing piece and moving it to an adjacent vertex. 
In every round, $M_{\tim}$ can be thought of as the outcome of a spinner wheel with four possibilities (or a biased 4-sided die) to determine which kind of move will happen in this round. The four outcomes of $M_{\tim}$ coincide with the four transition options above: (1) the cop can make a move of their choosing, potentially involving some randomness, (2) the robber can make a move of their choosing, potentially involving some randomness, (3) the cop has to make a move chosen uniformly at random, (4) the robber has to make a move chosen uniformly at random.  An example of one such spinner is depicted in Figure~\ref{fig:spinner}. 
\begin{figure}
\begin{tabular}{cc}
\begin{tikzpicture}[scale = 1.65]
\pie[
  color = {yellow, green, cyan, red},radius = .75,
  /tikz/nodes={text opacity=1,overlay}
] {25/$p_r^s$,30/$p_c^s$,20/$p_c^t$,25/$p_r^t$}
\draw [ultra thick][->] (0,0) -- (-.1,.4743);
\end{tikzpicture}
\end{tabular}
\caption{Probability of each move based on a spinner model.} \label{fig:spinner}
\end{figure}
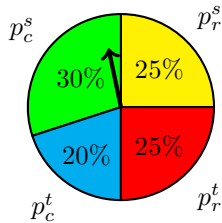
Similarly, $X_{\tim}$ can be thought of some means (such as a second spinner) of making the random decision that may be involved in the different types of moves.
%We refer to $p_c^t$ and $p_r^t$  as the \emph{tipsiness parameter} of the cop and robber, respectively.  
%The family of functions $c_{\tim}\colon V(G) \times V(G) \to V(G)$ is called a \emph{cop strategy}; $c_{\tim}(u,v) = w$ denotes the fact that if at time $\tim$ the cop is at $u$, the robber is at $v$, and the cop gets to decide where to move, then the cop will move to $w$. 
%Similarly, the family of functions $r_{\tim}\colon V(G) \times V(G) \to V(G)$ is called a \emph{robber strategy}. 
The functions $S_c$ and $S_r$ are called the \emph{cop strategy} and the \emph{robber strategy}, respectively.
We refer to moves in which the cop or robber get to employ their respective strategies as \emph{sober} cop or robber moves, and to moves where a random neighbor is selected as \emph{tipsy} cop or robber moves. We note that once both players have decided on a strategy and the initial position is fixed, the outcome of the game will only depend on the `spinner variables' $M_{\tim}$ and $X_{\tim}$.

We call $(\C_{\tim},\R_{\tim})$ the \emph{game state} at time $\tim$, and refer to $\C_{\tim}$ and $\R_{\tim}$ as the positions of the playing pieces. We say that the cop wins the game, if at some point the positions of the playing pieces coincide, the robber wins the game if this never happens. 
We say that the cop strategy $S_c$ is \emph{winning} against the robber strategy $S_r$ if there almost surely is a time step $\tim$ such that $\C_{\tim} = \R_{\tim}$ in the random process defined above. Conversely, we say that a robber strategy $S_r$ is winning against a cop strategy $S_c$ if $S_c$ is not winning against $S_r$. We note that in general this may depend on the distribution of starting positions.  However, if the graph we play on has at least one vertex of degree $\geq 3$, then from every starting position $(c_0,r_0)$ we can reach any possible game state $(c,r)$ by some finite sequence of moves before the cop wins the game (unless $c_0 = r_0$ or $c=r$). Hence, if the robber has a positive probability of winning the game from some starting position $(c_0,r_0)$, then they have a positive probability of winning from any starting position $(c_0',r_0')$, since there is a positive  probability that the game state will reach $(c_0,r_0)$ before either player makes a sober move or the cop catches the robber. In particular, in what follows we will always assume that the starting position is fixed, that is, there are vertices $c_0\neq r_0$ such that $\Prob[(\C_0, \R_0) = (c_0,r_0)] = 1$.

%Note that if the Markov chain $(\C_{\tim}, \R_{\tim})$ is recurrent, then the cop strategy $c_{\tim}$ is winning: in this case the probability of reaching any state in finite time is $1$, and in particular we will almost surely reach any state with $\C_{\tim}= \R_{\tim}$ in finite time.

We call a cop strategy $S_c$ superior to a cop strategy $S'_c$ against a robber strategy $S_r$ if  the corresponding random processes $(\C_{\tim},\R_{\tim})$ and $(\C'_{\tim},\R'_{\tim})$ satisfy
\[
    \forall (c,r) \in V\times V\colon \forall N\in \mathbb N  \colon \Prob_{(c,r)}[\exists\, \tim < N\colon \C_{\tim} = \R_{\tim}] \geq
    \Prob_{(c,r)}[\exists\, \tim < N\colon \C'_{\tim} = \R'_{\tim}].
\]

A cop strategy is called best possible in a class $\mathcal S$ of strategies against a given robber strategy $S_r$, if it is superior to any other strategy in $\mathcal S$ against $S_r$. 
Superior and best possible robber strategies can be defined analogously (with the converse inequality). 
We note that if $S_c$ is superior to a strategy $S'_c$ against $S_r$ and $S_c'$ is winning against $S_r$, then so is $S_c$. 
Thus there is no point in choosing $S'_c$ over $S_c$ in case the robber plays~$S_r$.

\section{Optimal and less than optimal strategies on the infinite grid}\label{section:grid_strategy}

In this section we describe best possible strategies for each player when playing on the infinite grid graph $\mathbb Z^2$. We also analyze for which values of the parameters $p_c^s$, $p_c^t$, $p_r^s$, and $p_r^t$ each of these strategies is winning.

Following P\'olya's approach, we start by simplifying the random process $(\C_{\tim},\R_{\tim})$ defined in the last section: instead of keeping track of both random walks, it suffices to keep track of the difference $\D_{\tim}:=\R_{\tim}-\C_{\tim}$.
In other words, we always think of the cop's position as being translated to the origin $(0,0)$, and  the current game state is uniquely determined by the robber's position $(x,y)$ in $\mathbb{Z}^2$.
 
We note that conditional on $\D_{\tim-1}$, tipsy cop and robber moves give the same probability distribution for $\D_{\tim}$.
Hence, to further simplify our notation, we combine $p_r^t$ and $p_c^t$ into one parameter which we call $p_t$, and drop the $s$ in $p_c^s$ and $p_r^s$
We now have the following parameters:
\begin{itemize}
\item $p_c = p_c^s$ is the probability of a sober move by the cop,
\item $p_r = p_r^s$  is the probability of a sober move by the robber,
\item $p_t= p_r^t+p_c^t$ is the probability of a tipsy move by either player,
\item $\D_{\tim}\in \mathbb Z^2$ is the difference between the cop's and robber's positions.
\end{itemize}

The optimal strategies and some other strategies we consider satisfy certain symmetry assumptions.
Firstly, they will only depend on  $\D_{\tim}$, and not on the specific positions $\C_{\tim}$ and $\R_{\tim}$. We adapt our definition of strategy functions accordingly, and let $\bar S_c(x,y)(x',y')$, and $\bar S_r(x,y)(x',y')$ denote the corresponding probability distributions.
Secondly, they will be (to some extent) symmetric under certain reflections of the grid; more precisely:
\[
    \bar S_c(x,y)(x',y') =
    \begin{cases}
        \bar S_c(-x,y)(-x',y') & \text{for } 0 < x \leq y,\\
        \bar S_c(y,x)(y',x') & \text{for } -x \leq y < x,\\
        \bar S_c(-x,-y)(-x',-y') & \text{for } y < -x.\\
    \end{cases}
\]
Note that under these symmetry conditions it is sufficient to define the strategies $\bar S_c$ and $\bar S_r$ for $y \geq x \geq 0$. We will consider the following strategies which (by the above symmetry assumptions) we only define for $y \geq x \geq 0$.
\begin{itemize}
    \item Robber strategy $\bar S_r^\uparrow$ always increases the $y$-coordinate, that is  $\bar S_r^\uparrow(x,y)(x,y+1) =1$.
    \item Cop strategy $\bar S_c^\downarrow$ always decreases the $y$-coordinate, that is  $\bar S_c^\downarrow(x,y)(x,y-1)=1$. %Note that if $x=y$ then by symmetry this is equivalent to decreasing the $x$-coordinate.
    \item Cop strategy $\bar S_c^{\downarrow} (p)$ always decreases the $y$-coordinate, unless $x=y$ in which case $\bar S_c^{\downarrow} (p)$ decreases the $y$-coordinate with probability $p$, and increases it with probability $1-p$. We note that $\bar S_c^{\downarrow} = \bar S_c^{\downarrow} (1)$.
    \item Cop strategy $\bar S_c^{\leftarrow}$ decreases the $x$-coordinate, unless the $x$-coordinate is 0, in which case $\bar S_c^{\leftarrow}$ decreases the $y$-coordinate.
\end{itemize}

%TO DO: for x=0, y=0, x=y etc. this imposes some restrictions on what is allowed (definition of strategies below do not adhere to these restrictions for x=y)

Throughout the rest of this section, we will let $\tilde \D_{\tim}$ denote an image of $\D_{\tim}$ in the quadrant $\{(x,y)\in \mathbb Z^2 \mid y \geq |x|\}$ under an appropriate sequence of reflections. More precisely, $\tilde \D_{\tim} = \phi(\D_{\tim} )$, where
\[
\phi(x,y) =
\begin{cases}
    (x,y) & \text{if } y \geq |x|,\\
    (y,x) & \text{if } x > |y|,\\
    (-y,-x) & \text{if } x < -|y|,\\
    (-x,-y) & \text{if } y \leq -|x|.
\end{cases}
\]
The reason we do not restrict $\tilde \D_{\tim}$ to the octant $\{(x,y)\in \mathbb Z^2 \mid y \geq x \geq 0\}$ is that in some proofs it will be easier to have only diagonal boundaries for the region of $\mathbb Z^2$ in which  $\tilde \D_{\tim}$ takes its values. We note that if the strategies of both players satisfy the above symmetry assumptions, then $\tilde \D_{\tim}$ is a Markov chain, but if they don't then this is not necessarily the case.
%
%Intuitively, it is clear that the cop should try to decrease the distance between the playing pieces while the robber should try to increase this distance. 
%Note that generally (unless $x=0$ or $y=0$) there will be two different ways of increasing the distance and two different ways of decreasing it. Thus there are several somewhat sensible strategies for both the cop and the robber. 
%The main result of this section shows that not all of these strategies are equally good.
%
%As mentioned above, throughout the game, the game state $\D_{\tim}$ can be described by a random walk on $\mathbb Z^2$, where the transition probabilities depend on the strategies employed by the cop and the robber, as well as on the probabilities $c$ and $r$ of a sober move by either player. 
%We note that this random walk is irreducible and time homogeneous.
We note that if $\tilde \D_{\tim}$ is a Markov chain, and this Markov chain is recurrent, then the cop almost surely wins; if it is positive recurrent then the expected time until this happens is finite. 
Conversely, if this Markov chain is transient, then the robber has a positive probability of winning.

Our main result in this section, Theorem \ref{thm:main on grid}, is as follows.
\begin{theorem}\label{thm:main on grid}
With parameters as defined above:
\begin{enumerate}
    \item If $p_r>p_c$, then $\bar S_r^\uparrow$ is winning against every cop strategy.
    \item If $p_c\geq p_r$, then $\bar S_c^\downarrow$ is winning against every robber strategy. If $p_c>p_r$, then the expected time until the game ends is finite.
    \item There is $\epsilon > 0$ depending on $p_t$ such that if $p_r < p_c < p_r + \epsilon$, then the cop strategy $\bar S_c^\leftarrow$ is not winning against $\bar S_r^\uparrow$.
\end{enumerate}
\end{theorem}

We remark that while $\bar S_r^\uparrow$ and $\bar S_c^\downarrow$ satisfy the above symmetry conditions, we do not assume that other strategies do. In other words, the first part of the above theorem implies that if $p_r>p_c$, then $\bar S_r^\uparrow$ is also winning against cop strategies which are not necessarily symmetric. The second part implies that if $p_c\geq p_r$, then $\bar S_c^\downarrow$ is also winning against robber strategies which do not satisfy the symmetry assumptions used in the definitions of $\bar S_r^\uparrow$ and $\bar S_c^\downarrow$.

The first part of Theorem~\ref{thm:main on grid} follows from Lemma \ref{lem:robber_wins} below. The second and third part will follow from results in Sections \ref{subsection:smart cop} and \ref{subsection:foolish cop}, respectively.

\begin{lemma}\label{lem:robber_wins} 
If $p_r>p_c$, then any robber strategy such that the distance between the position of the cop and the robber increases on every sober robber move is winning against any cop strategy. In particular, $\bar S_r^\uparrow$ is winning against any cop strategy.
\end{lemma}
\begin{proof}
Recall that we consider $\C_{\tim}$ and $\R_{\tim}$ as functions of i.i.d.\ processes $M_{\tim},X_{\tim}$ and that the construction depends on an enumeration of neighbors of $u$ and $v$ for every pair $(u,v)$ of vertices. Assume that this enumeration is so that $u_1$ and $u_2$ lie further away from $v$ than $u$, and $v_1$ and $v_2$ lie further away from $u$ than $v$. 

We define an auxiliary random process by
\[
    Y_{\tim} =
    \begin{cases}
        1 &\text{if $M_{\tim} = \mathrm{rs} $, or $M_{\tim} \in \{\mathrm{ct} ,\mathrm{rt} \}$ and $X_{\tim} \leq \frac 12$,}\\
        -1 &\text{otherwise,}
    \end{cases}
\]
and note that if $Y_{\tim} = 1$, then $\|\D_{\tim}\|_1 = \|\D_{\tim-1}\|_1 + 1$ where $\|.\|_1$ denotes (as usual) the sum of absolute values of the coordinates. In particular, $\|\D_{0}\|_1 + \sum_{i=1}^{\tim}Y_{i}\leq \|\D_{\tim}\|_1$ for all $\tim$. 

The random variables $Y_{\tim}$ are i.i.d.\, and $\Prob[Y_{\tim} = 1] = p_r + p_t/2 > 1/2$. Hence the probability that $\sum_{i=1}^{\tim}Y_{i}$ is non-negative for all $\tim$ is positive. Since the starting positions $c_0$ and $r_0$  are assumed to be distinct we know that $\|\D_{0}\|_1 > 0$, which implies that the probability that $\|\D_{\tim}\|_1>0$ for all $\tim$ is positive, and hence the robber wins the game with positive probability.
%
%Since the robber always increases the distance between themselves and the cop, we know that the probability of increasing the distance between the cop and robber in any round is at least $p_r+p_t/2$ (it may be larger depending on the current game state and the cop's strategy), and consequently the probability of the distance decreasing is at most $p_c + p_t/2$.
%Since $p_r < p_c$, we conclude that $p_r+p_t/2 > 1/2$ and thus the robber wins with positive probability by Proposition \ref{prp:inhomogeneous_recurrent}.
\end{proof}

In the remainder of this section, we focus on the case $p_c \geq p_r$. 
As one might expect, in this case the cop has a winning strategy, and in case $p_c > p_r$ the cop can ensure that the game has finite expected length. 
This is shown in Subsection~\ref{subsection:smart cop}. In contrast to Lemma~\ref{lem:robber_wins}, this depends on the strategy chosen by the cop; even if $p_c>p_r$ there are cop strategies always decreasing the distance between the cop and the robber which do not almost surely lead to a cop win. 
One such strategy is $\bar S_c^\leftarrow$ as discussed in Subsection \ref{subsection:foolish cop}.

\subsection{Cop playing an optimal strategy}\label{subsection:smart cop}

In this section, we show that as long as $p_c \geq p_r$ the cop has a winning strategy. The section is organized as follows: we start by showing in Lemma~\ref{lemma:robber_strategy} that $\bar S_r^\uparrow$ is the best possible robber strategy against $\bar S_c^\downarrow$, in particular, if   $S_r^\uparrow$ is not winning against $\bar S_c^\downarrow$, then no robber strategy is.
After that, we show that $\bar S_c^\downarrow$ is almost surely winning against $\bar S_r^\uparrow$ given that $p_c \geq p_r$. 
The proof of this result is divided into two lemmas: we begin by showing that $\bar S_c^\downarrow$ is the best possible strategy against $\bar S_r^\uparrow$ in the family $\mathcal S := \{\bar S_c^\downarrow(p)\mid 0\leq p \leq 1\}$, and then we give a strategy in the family $\mathcal S$ which is winning against $\bar S_r^\uparrow$.

%Throughout the rest of this section we use the following model to determine tipsy moves in the game. Let $\tilde \D_{\tim}$ be defined as above.  Let $X_\tim$ be chosen uniformly at random from the set of canonical unit vectors $\{(1,0),(-1,0),(0,1),(0,-1)\}$, and pick $\D_{\tim+1} = \D_{\tim} + \varphi^{-1} (X_{\tim})$, where $\varphi$ is an automorphisms of $\mathbb Z^2$ fixing the origin $(0,0)$, and mapping $\D_{\tim}$ to $\tilde \D_{\tim}$. In order to avoid ambiguity, we arbitrarily fix one such map $\varphi$ for each vertex where the map is not unique (for instance, we can let $\varphi$ be the branch of the function $\phi$ which maps $\D_{\tim}$ to $\tilde \D_{\tim}$).

%Let $S_c$ and $S_c'$ be cop strategies, let $S_r$  and $S_r'$ be robber strategies, and denote by $\D_{\tim}$ and $\D_{\tim}'$ the respective random sequences of game states. We define the \emph{standard coupling} of $\D_{\tim}$ and $\D_{\tim}'$ as the coupling where both games use the same sequence $M_{\tim}$ of spinner outcomes (that is, if a sober cop or robber move happens in the one game, then also in the other), and both games use the same sequence of random directions $X_{\tim}$ in their respective tipsy moves.

\begin{lemma} \label{lemma:robber_strategy}
The robber strategy $\bar S_r^\uparrow$ is superior to any other robber strategy $S_r$ against $\bar S_c^\downarrow$.
%In particular, if the cop plays $\bar S_c^\downarrow$, then the robber has a symmetric strategy which is winning with positive probability if and only if $\bar S_r^\uparrow$ is winning with positive probability.
\end{lemma}

\begin{proof}
Consider the standard coupling of $\D_{\tim}$ and $\D_{\tim}'$, where $\D_{\tim}$ is the sequence of game states with respect to the strategies $\bar S_c^\downarrow$ and $\bar S_r^\uparrow$, and $\D_{\tim}'$ is the sequence of game states with respect to the strategies $\bar S_c^\downarrow$ and $S_r$, for an arbitrary robber strategy $S_r$. Define $\tilde \D_{\tim}$ and $\tilde \D_{\tim}'$ as before. Note that by definition neither player is allowed to pass in either of the two games. Hence $\|\D_{\tim}- \D_{\tim}'\|_1$ and thus also $\|\tilde \D_{\tim}-\tilde \D_{\tim}'\|_1$ is even at all times.

To ensure that the effects of tipsy moves in the two games are compatible, we define orderings of the neighbors of all vertices (slightly different to those used in the proof of Lemma \ref{lem:robber_wins}).
For $v-u \in \{(x,y) \in \mathbb R^2 \mid y \geq |x|\}$ define $u_1, \dots, u_4$ by
$u-u_1 = (0,-1)$, $u-u_2 = (-1,0)$, $u-u_3 = (1,0)$, and $u-u_4 = (0,1)$. Enumerations for the remaining vertices are determined by diagonal symmetries so that tipsy cop moves have the same effect on $\tilde \D_{\tim}$, no matter whether $\tilde \D_{\tim} = \D_{\tim}$, or $\tilde \D_{\tim} \neq \D_{\tim}$.
Moreover, we define $v_1, \dots, v_4$ by $u-u_i = v_i-v$ (so that the effect of tipsy cop and robber moves on $\D_{\tim}$ only depends on $X_{\tim}$)

We prove by induction on $\tim$ that if $\tilde \D_{\timtwo} ' \neq (0,0)$ for all $\timtwo \leq \tim$, then $\tilde \D_{\tim}-\tilde \D_{\tim}' \in \{(a,b) \in \mathbb Z^2 \mid b \geq |a|\}$. In particular, the robber cannot lose the game playing strategy $S_r^{\uparrow}$ without first losing the coupled game where they play strategy $S_r$. For the base case, we note that this is true for $\tim =0$ by the fact that starting positions in the standard coupling are the same. 

For the induction step, let $\tilde \D_{\tim} = (x,y)$ and $\tilde \D_{\tim}' = (x',y')$ and assume that $y-y' \geq |x'-x|$. 
We first note that if $y-y' > |x'-x|$, then  $y-y' \geq |x'-x| + 2$ because $\|\tilde \D_{\tim}-\tilde \D_{\tim}'\|_1$ is even. Since $\tilde \D_{\tim}$ and $\tilde \D_{\tim+1}$ differ in at most one coordinate and the difference is at most 1, and the same is true for $\tilde \D_{\tim}'$ and $\tilde \D_{\tim+1}'$, this implies that in this case the inductive claim remains valid at step $\tim+1$.

We may thus assume without loss of generality that  $y-y' = |x'-x|$. We distinguish cases according to the different types of moves that can occur at time $\tim$.

If we have a sober robber move at time $\tim$, then the $y$-coordinate of $\tilde \D_{\tim+1}$ is larger than that of $\tilde \D_{\tim}$, and thus $\tilde \D_{\tim+1}-\tilde \D_{\tim}' \in \{(a,b) \mid b \geq|a|+1\}$. Since $\tilde \D_{\tim}'$ and $\tilde \D_{\tim+1}'$ differ in at most one coordinate and the difference is at most 1, this implies that the inductive claim remains valid at step $\tim+1$.

Next consider the case when we have a sober cop move. If $y' \neq |x'|$, then $\D_{\tim +1}' = (x',y'-1)$, and thus $\tilde \D_{\tim+1}-\tilde \D_{\tim}' \in \{(a,b) \mid b \geq |a|+1\}$. The same argument as above shows that the inductive claim remains valid at step $\tim+1$. If $y' = x'$ then either $y=x$, or $y = x' + y' - x = 2 y' -x$ due to our assumption $y-y' = |x'-x|$. In the first case $\tilde \D_{\tim+1}' = (x'-1,y')$ and $\tilde \D_{\tim+1} = (x-1,y)$, and the inductive claim remains valid at step $\tim+1$ since $(y-y') = |x'-x| = |(x'-1)-(x-1)|$. In the second case $\tilde \D_{\tim+1}' = (x'-1,y')$ and $\tilde \D_{\tim+1} = (x,y-1)$, and the inductive claim remains valid at step $\tim+1$ since $(y-1-y') = y'-1-x = x'-1-x$.
The case $y=-x$ follows by a symmetric argument.

Finally consider the case when we have a tipsy move by either player. We further distinguish cases based on $X_s$. If $X_s \in [0,\frac 14)$, then the exact same argument as in the case of sober robber moves applies. If $X_s \in [\frac 34,1)$, then the exact same argument as in the case of sober cop moves applies. If $X_s \in [\frac 14, \frac 12)$ and $x' \neq y'$, then $x \neq y$, and thus $\tilde \D_{\tim+1}' = (x'+1,y')$ and $\tilde \D_{\tim+1} = (x+1,y)$ and it is easy to check that the inductive claim remains valid. If $x' = y'$, then either $y=x$, or $y = x' + y' - x = 2 y' -x$ as above.
In the first case $\tilde \D_{\tim+1}' = (x',y'+1)$ and $\tilde \D_{\tim+1} = (x,y+1)$, and the inductive claim remains valid at step $\tim+1$ since $(y+1)-(y'+1) = y-y' |x'-x|$. In the second case $\tilde \D_{\tim+1}' = (x',y'+1)$ and $\tilde \D_{\tim+1} = (x+1,y)$, and the inductive claim remains valid at step $\tim+1$ since $(y-y'-1) = y'-x-1 = x'-(x+1)$. The case $X_s \in [\frac 12, \frac 34)$ again follows by a symmetric argument.

This finishes the inductive step and thus also the proof.
\end{proof}

\begin{lemma}\label{lemma:cop_strategy}
The cop strategy $S_c^\uparrow$ is superior to any strategy of type $S_c^\uparrow(p)$ against $S_r^\downarrow$.
\end{lemma}

\begin{proof} 

As in the proof of Lemma \ref{lemma:robber_strategy}, consider the standard coupling of $\D_{\tim}$ and $\D_{\tim}'$, where $\D_{\tim}$ is the sequence of game states with respect to the strategies $\bar S_c^\downarrow$ and $\bar S_r^\uparrow$, and $\D_{\tim}'$ is the sequence of game states with respect to the strategies $\bar S_c^\downarrow(p)$ and $\bar S_r^\uparrow$. 
Define $\tilde \D_{\tim}$ and $\tilde \D_{\tim}'$ as above. An inductive argument based on a case-by-case analysis like in the proof of Lemma \ref{lemma:robber_strategy} shows that if $\tilde \D_{\timtwo} ' \neq (0,0)$ for all $\timtwo \leq \tim$, then $\tilde \D_{\tim}' - \tilde \D_{\tim} \in \{(a,b) \in \mathbb Z^2 \mid b \geq |a|\}$. 
We leave the details to the reader.
\end{proof}

\begin{lemma} \label{lemma:positive_recurrent} 
Assume that $p_r < p_c$. For $p = \frac{p_t(p_c-p_r)}{2p_c(2p_r+p_t)}$, the cop strategy $S_c^\downarrow(p)$ is winning against the robber strategy $S_r^\uparrow$, and the expected duration of the game is finite.
\end{lemma}

\begin{proof}
We first note that $0 \leq p \leq 1$, so the strategy $S_c^\downarrow(p)$ is well defined. Let $\D_{\tim}$ be the sequence of game states with respect to strategies $S_c^\downarrow(p)$ and $S_r^\uparrow$, and let $\tilde \D_{\tim}$ be defined as above. Due to our symmetry assumptions, $\tilde \D_{\tim}$ is a random walk on the induced subgraph of the integer grid with vertex set $\{(x,y) \in \mathbb Z^2 \mid y \geq x \geq 0\}$. 

We start by computing transition probabilities of this random walk. The only possible transition in this graph starting at $(0,0)$ is to $(0,1)$, so $p_{(0,0),(0,1)}=1$.
For $(x,y) \neq (0,0)$ the transition probabilities are given by
\[
    p_{(x,y),(x',y')} =
    \begin{cases}
        p_r+ p_t/4& \text{if } |x| \neq y \text{ and } (x',y') = (x,y+1)\\
        p_c+p_t/4 & \text{if } |x| \neq y \text{ and } (x',y') = (x,y-1)\\
         p_t/2 &\text{if } |x| \neq y \text{ and } (x',y') = (x\pm 1,y)\\
        p \cdot p_c+ p_t/2 &\text{if } x = \pm y \text{ and } (x',y') = (x \mp 1,y)\\
        (1-p) p_c+p_r+ p_t/2 &\text{if } |x| = y \text{ and } (x',y') = (x,y+1).
    \end{cases}
\]

We claim that these are precisely the transition probabilities of a random walk given by edge weights, that is, there is a function $w \colon E(G) \to \mathbb R_+$ such that
\[p_{ab} = \frac{w(ab)}{\sum_{b' \sim a} w(ab')}.\] 
We will show that this function is given by
\begin{align*}
    w((x,y),(x,y+1)) &= \left( \frac{p_r+p_t/4}{p_c+p_t/4}\right)^y \text{ and}\\
    w((x,y),(x+1,y)) &= \frac{p_t/4}{p_c+p_t/4}\left( \frac{p_r+p_t/4}{p_c+p_t/4}\right)^{y-1}.
\end{align*}

To prove this claim, first consider a vertex $a =(x,y)$ with $y \neq |x|$. Then 
\[
\sum_{b' \sim a} w(ab') = \left( \frac{p_r+p_t/4}{p_c+p_t/4}\right)^y + \left( \frac{p_r+p_t/4}{p_c+p_t/4}\right)^{y-1} + 2 \cdot \frac{p_t/4}{p_c+p_t/4}\left( \frac{p_r+p_t/4}{p_c+p_t/4}\right)^{y-1} = \frac{(p_r+p_t/4)^{y-1}}{(p_c+p_t/4)^y}
\]
and it is easy to verify that $p_{ab} = {w(ab)}/{\sum_{b' \sim a} w(ab')}$ for every neighbor $b$ of $a$. Next consider a vertex $a = (x,x)$; the case $a = (x,-x)$ is analogous. We only have two incident edges and thus obtain
%\[\sum_{b' \sim a} w(ab') = \left( \frac{r+t/4}{c+t/4}\right)^y +  \frac{t/4}{c+t/4}\left( \frac{r+t/4}{c+t/4}\right)^{y-1} = \frac{(r+t/4)^{y-1}}{(c+t/4)^y} \cdot \left(r + \frac t2\right). \]
\[
\sum_{b' \sim a} w(ab') = \left( \frac{p_r+p_t/4}{p_c+p_t/4}\right)^y +  \frac{p_t/4}{p_c+p_t/4}\left( \frac{p_r+p_t/4}{p_c+p_t/4}\right)^{y-1} = \frac{(p_r+p_t/4)^{y-1}}{(p_c+p_t/4)^y} \cdot \left(p_r + \frac{p_t}{2}\right).
\]

Letting $b = (x-1,y)$, we obtain
\begin{align*}
    \frac{w(ab)}{\sum_{b' \sim a} w(ab')} 
    = \frac{p_t/4}{p_r+p_t/2}
    %= \frac{p_t (p_c+p_r+p_t)}{2(2 p_r+p_t)}
    %= \frac{p_t (p_c-p_r+2p_r+p_t)}{2(2 p_r+p_t)}
    = \frac{p_t (p_c-p_r)}{2(2 p_r+p_t)} + \frac{p_t (2p_r+p_t)}{2(2 p_r+p_t)}
    = p\cdot p_c+p_t/2
\end{align*}
as claimed. Since $a$ has only two neighbors, and transition probabilities must add up to $1$, this concludes the proof of our claim.

By Theorem \ref{thm:rw-edgeweights}, this random walk is positive recurrent if and only if
$\sum_{a,b} w(a,b) < \infty$.
Hence it suffices to show that 
\[\sum_{y \geq 0}\left(\sum_{-y\leq x\leq y}w((x,y),(x,y+1))\right)+
\sum_{y \geq 0}\left(\sum_{-y\leq x<y}w((x,y),(x+1,y))\right)
<\infty.\]
Letting $\Ratb=\frac{p_r+p_t/4}{p_c+p_t/4}$ and $\Rata = \frac{p_t/4}{p_c+p_t/4}$ and noting that $\Ratb < 1$, we obtain
\begin{align*}
    \sum_{y \geq 0}\left(\sum_{-y\leq x\leq y}w((x,y),(x,y+1))\right)
    =
    \sum_{y\geq 0}\left(\sum_{-y \leq x \leq y} \Ratb 
    ^y\right)= \sum_{y \geq 0} (2y+1) \Ratb^y =\frac{\Ratb+1}{(\Ratb-1)^2} <\infty,
    % &=2\sum_{y\geq 0}\sum_{0\leq x \leq y} \Ratb 
    % ^y- \sum_{y\geq 0}\Ratb^y=
\end{align*}
and 
\begin{align*}
\sum_{y \geq 0}\left(\sum_{-y\leq x< y}w((x,y),(x+1,y))\right) =\sum_{y\geq 1}\left(\sum_{-y\leq x<y}\Rata\Ratb^{y-1} \right) =\sum_{y\geq 1}2y\Rata \Ratb^{y-1}=\frac{2\Rata}{(\Ratb-1)^2}<\infty.
\end{align*}
Thus, $\tilde \D_{\tim}$ is positive recurrent and in particular the expected number of steps until $\tilde \D_{\tim}=(0,0)$ is finite; in other words, the cop wins the game almost surely and the expected time until this happens is finite.
\end{proof}

Finally, let us consider the case $p_r=p_c$; note that Lemmas \ref{lemma:robber_strategy} and  \ref{lemma:cop_strategy} did not assume anything about the value of $p_c$ and $p_r$, so they still apply in this case.

\begin{lemma}
If $0 \leq p_r=p_c \leq 1/2$, then $S_c^\downarrow$ is winning against $S_r^\uparrow$, but the expected length of the game is infinite.
\end{lemma}
\begin{proof}
If $p_t = 0$, then $p_r=p_c=1/2$, and thus $\D_{\tim}$ is a simple random walk on the $y$-axis of $\mathbb{Z}^2$ which is null-recurrent.  

If $p_t > 0$, let $\tilde \D_{\tim}'$ be the sequence of game states with respect to the strategies $S_c^\downarrow(0)$  and $S_r^\uparrow$. The same computations as in Lemma \ref{lemma:positive_recurrent} show that $\tilde \D_{\tim}'$ is a random walk defined by edge weights 
\[
    w((x,y),(x+1,y))=\left( \dfrac{p_t/4}{p_c+p_t/4}\right)\quad\mbox{and}\quad w((x,y),(x,y+1)) =1.
\]
Note that there are only two different edge weights both of which are non-zero. Hence, if $\tilde \D_{\tim}'$ was transient, then the random walk with all edge weights equal to $1$ would also be transient by Theorem~\ref{thm:RMP}, but we know that a simple random walk on $\mathbb Z^2$ and thus also the simple random walk on any subgraph of $\mathbb Z^2$ is recurrent. 
So $S_c^\downarrow(0)$  is winning against $S_r^\uparrow$. By Lemmas~\ref{lemma:robber_strategy}, and~\ref{lemma:cop_strategy}, $S_c^\downarrow$ is a winning strategy for the cop against any robber strategy.

To show that the expected duration of the game is infinite, let $\D_{\tim}$ be the sequence of game states and let $\tilde \D_{\tim}$ be defined as above. Consider the random process $Y_{\tim}$ on the integers with initial position $Y_0 =\|\D_{0}\|_1$ and the following transition rules. 
If $\tilde \D_{\tim} = (0,y)$ for some $y$, $M_{\tim} \in \{\mathrm{ct} ,\mathrm{rt} \}$, and $\tilde \D_{\tim+1} - \tilde \D_{\tim} = (0,1)$ we let $Y_{\tim+1} = Y_{\tim}-1$. In all other cases, we let $Y_{\tim+1} = Y_{\tim} + \|\tilde \D_{\tim+1}\|_1 - \|\tilde \D_{\tim}\|_1$. 

Note that by choosing appropriate enumerations of neighbours, we can ensure that the increments $Y_{\tim+1} - Y_{\tim}$ only depend on $M_\tim$ and $X_\tim$ and hence they are i.i.d\ uniform on $\{1,-1\}$. It follows that $Y_{\tim}$ is a simple random walk on the integers. Further note that $Y_{\tim} \leq \|\tilde \D_{\tim}\|_1$. Thus, if the expected duration of the game was finite, then the expected time until $Y_{\tim}$ reaches $0$ would be finite as well, which contradicts Theorem \ref{thm:gamblersruin}.
\end{proof}

\subsection{Cop playing foolish strategy}\label{subsection:foolish cop}
The aim of this subsection is to show that the cop should not deviate too far from strategy $S_c^\downarrow$ if they want to maximize their chances of winning.
More precisely, if the cop plays the strategy $S_c^\leftarrow$, and the robber plays strategy $S_r^\uparrow$, then the robber has positive probability of winning for some values $p_c > p_r$.

\begin{theorem}
If $p_c < \left(1+\frac{p_t}{4+p_t}\right) p_r$, then $S_r^\uparrow$ is winning against $S_c^\leftarrow$.
\end{theorem}
\begin{proof}
As in previous proofs, we start by defining an enumeration of the neighbors of $u$ and $v$ with respect to the pair $(u,v)$. As usual, write $\|(x,y)\|_{\infty}$ for $\max (|x|,|y|)$.  Let $u_1$ be a neighbor of $u$ minimising $\|v-u_i\|_{\infty}$, let $u_4$ be a neighbor of $u$ maximising $\|v-u_i\|_{\infty}$, let $u_2$ be a neighbor such that $\|v-u_i\|_{\infty} =\|v-u\|_{\infty}$, and let $u_3$ be the remaining neighbor. Note that choosing such an enumeration is always possible, but it is not necessarily unique. In order to ensure that random moves by either player have the same effect in the game, let $v_i$ be the neighbor of $v$ satisfying $u-u_i = v_i - v$.

Recursively define random variables $Y_n$ and $t_n$ depending on the sequences $M_{\tim}$ and $X_{\tim}$ as follows. Let $t_0 = 0$ and $Y_0 = 1$. For $n \geq 0$, we distinguish the following cases:
\begin{itemize}
    \item If $M_{t_n} = \mathrm{rs} $, or $M_{t_n} \in \{\mathrm{ct} , \mathrm{rt} \}$ and $X_{t_n} < \frac 14$, then we set $Y_{n+1}= Y_{n}+1$ and $t_{n+1} = t_n+1$.
    \item If $M_{t_n} = \mathrm{cs} $, or $M_{t_n} \in \{\mathrm{ct} , \mathrm{rt} \}$ and $X_{t_n} \geq \frac 34$, then we set $Y_{n+1}= Y_{n}-1$ and $t_{n+1} = t_n+1$.
    \item If neither of the above cases applies, then we set $t_{n+1} = t_n+2$ and further distinguish cases for $Y_{n+1}$.
    \begin{itemize}
        \item If $M_{t_{n}+1} = \mathrm{rs} $, or $M_{t_{n}+1} \in \{\mathrm{ct} , \mathrm{rt} \}$ and $X_{t_{n}+1} < \frac 14$, then we set $Y_{n+1}= Y_{n}+1$.
        \item If $\frac 14 \leq X_{t_{n}} < \frac 12$ and $M_{t_{n}+1} = \mathrm{rs} $, or $M_{t_{n}+1} \in \{\mathrm{ct} , \mathrm{rt} \}$ and $\frac 14 \leq X_{t_{n+1}} < \frac 12$, then we set $Y_{n+1}= Y_{n}-1$.
        \item Otherwise, we set $Y_{n+1}= Y_{n}$.
    \end{itemize} 
\end{itemize}

We note that $\|\D_{t_{n+1}}-\D_{t_n}\|_{\infty} \leq 1$ due to the definition of $t_n$ and the ordering of the neighbors defined above. Moreover, if $\|\D_{t_{n+1}}\|_{\infty}-\|\D_{t_n}\|_{\infty} = -1$, then also $Y_{n+1} - Y_n = -1$. Hence $Y_n \leq \|\D_{t_n}\|_{\infty}$. We further note that $Y_n$ is in fact a random walk on $\mathbb Z$ with transition probabilities
\begin{align*}
    p_{n,n+1} = \Prob[Y_{n+1} = y+1\mid Y_{n} = y] &= p_r + \frac {p_t}4 + \frac {p_t}2 \left(p_r+\frac {p_t}4 \right),\\ 
    p_{n,n-1} =\Prob[Y_{n+1} = y-1\mid Y_{n} = y] &= p_c + \frac {p_t}4   + \frac {p_t}4 p_c + \frac {p_t}2  \cdot \frac {p_t}4 .
\end{align*}
Note that these do not add up to $1$ since $\Prob[Y_{n+1} = y\mid Y_{n} = y]$  is non-zero.

If $p_c < \left(1+\frac{p_t}{4+p_t}\right) p_r$, then $p_{n,n+1} > p_{n,n-1}$, and Theorem \ref{thm:gamblersruin} implies that $\Prob[\forall n > 0\colon Y_n >1] >0$. This implies that the probability that  $\|\D_{t_n}\|_{\infty} \geq 2$ for every $n>0$ is positive, and therefore the robber wins the game with positive probability.
\end{proof}

\section{Game on Trees}\label{section:tree_strategy}
We revert back to parameters $p_c^s$, $p_r^s$, $p_c^t$, and $p_r^t$. 
Throughout this section we further assume that $p_c^s+p_c^t = p_r^s + p_r^t= 1/2$, or in other words, no player gets to make significantly more moves than the other; the only imbalance in the game comes from the tipsiness parameters of the two players. In particular, the parameters $p_c^s$ and $p_r^s$ are determined by $p_c^t$ and $p_r^t$.

We start by considering infinite $\delta$-regular trees. Consider the cop strategy $S_c^\ast$ in which every sober cop move decreases the distance between the cop and the robber, and the robber strategy $S_r^\ast$ in which every sober robber move increases this distance. We note that there are many different ways for the robber to increase the distance, but they are all equivalent in the sense that any pair of vertices at distance $d$ in a $\delta$-regular trees can be mapped to any other such pair of vertices by an automorphism of the tree.

If the cop plays strategy $S_c^\ast$, then the probability that the distance between the two players decreases in any given step is bounded below by $p_c^s + (p_c^t+p_r^t)\frac{1}{\delta}$.
If the robber plays strategy $S_r^\ast$, then the probability that the distance between the two players increases in any given step is bounded below by $p_r^s + (p_c^t+p_r^t)\frac{\delta-1}{\delta}$.
Substituting $p_c^s = 1/2-p_c^t$ and $p_r^s=1/2-p_r^t$ and applying Proposition \ref{prp:inhomogeneous_recurrent} we arrive at the following conclusion.
\begin{lemma} \label{lemma:delta}
Assume that $p_c^s+p_c^t = p_r^s + p_r^t= 1/2$ for a game on a $\delta$-regular tree.
If $p_r^t \geq p_c^t (\delta-1)$, then $S_c^\ast$ is winning against every robber strategy. Otherwise $S_r^\ast$ is winning against every cop strategy. 
\end{lemma}

 \begin{comment}
For instance, if $p_r^t=1/2$ so $r=0$ and $c+p_c^t=1/2$ then
 $ (1/2-c+1/2)\frac{\delta-1}{\delta}$ is the probability that they move further away in a given round and 
 $c + (1/2-c+1/2)\frac{1}{\delta}$ is the probability that they move closer. 
 Setting these probabilities equal we have
 $$ (1/2-c+1/2)\frac{\delta-1}{\delta}= c + (1/2-c+1/2)\frac{1}{\delta}.$$
 Solving for $c$ we
find that $c=\frac{\delta-2}{2(\delta-1)}$, and $$\lim_{\delta \to \infty} c =\lim_{\delta \to \infty} \frac{\delta-2}{2(\delta-1)}  =\lim_{\delta \to \infty} \frac{1-\frac{2}{\delta} }{2(1-\frac{1}{\delta})} = \frac{1}{2}  $$
\end{comment} 
 \begin{comment}
 Solving for $c$ we find that
 \begin{align*}
  (1-c)\frac{\delta-1}{\delta} & = c + (1-c)\frac{1}{\delta} \\
 (1-c)(\delta-1) & = (\delta -1) c + 1  \\
 (1-2c)& =  \frac{1}{\delta-1}  \\
 2c& = - \frac{1}{\delta-1} +  \frac{\delta-1}{\delta-1} \\
 c& =  \frac{\delta-2}{2(\delta-1)} \\
    \end{align*}\end{comment}

We now consider a more general class of trees which can be described as $\delta$-regular trees rooted to a $(\Delta-1)$-regular tree.
To be precise, let $\delta > 1$, and let $T$ be the tree that is $\delta$-regular everywhere except the root, which has degree $\delta-1$.
Let $\Delta > \delta$, and let $B$ be a $(\Delta-1)$-regular tree. 
Then the tree $X(\Delta,\delta)$ is constructed by connecting the root of a copy of $T$ to each node in $B$, see Figure \ref{fig:examples} for examples. We call $B$ the \emph{base tree} of $X(\Delta,\delta)$ and we refer to copies of the $\delta$-regular tree in $X(\Delta,\delta)$ as  \emph{small trees}. Clearly, each node in the base of $X(\Delta,\delta)$ has degree $\Delta$, and each node in a small tree has degree $\delta$.

\begin{figure}
\tikzset{every picture/.style={line width=0.75pt}} %set default line width to 0.75pt        
\resizebox{.3\textwidth}{!}{
\begin{tikzpicture}[x=0.75pt,y=0.75pt,yscale=-1,xscale=1]
%uncomment if require: \path (0,404); %set diagram left start at 0, and has height of 404

%Shape: Circle [id:dp946601855754976] 
\draw  [fill={rgb, 255:red, 0; green, 0; blue, 0 }  ,fill opacity=1 ] (174,254.25) .. controls (174,251.9) and (175.9,250) .. (178.25,250) .. controls (180.6,250) and (182.5,251.9) .. (182.5,254.25) .. controls (182.5,256.6) and (180.6,258.5) .. (178.25,258.5) .. controls (175.9,258.5) and (174,256.6) .. (174,254.25) -- cycle ;
%Shape: Circle [id:dp7206168166683544] 
\draw  [fill={rgb, 255:red, 0; green, 0; blue, 0 }  ,fill opacity=1 ] (141.25,232.75) .. controls (141.25,230.4) and (143.15,228.5) .. (145.5,228.5) .. controls (147.85,228.5) and (149.75,230.4) .. (149.75,232.75) .. controls (149.75,235.1) and (147.85,237) .. (145.5,237) .. controls (143.15,237) and (141.25,235.1) .. (141.25,232.75) -- cycle ;
%Shape: Circle [id:dp649967295899101] 
\draw  [fill={rgb, 255:red, 0; green, 0; blue, 0 }  ,fill opacity=1 ] (199.08,233.41) .. controls (199.55,231.11) and (201.79,229.62) .. (204.09,230.08) .. controls (206.39,230.55) and (207.88,232.79) .. (207.42,235.09) .. controls (206.95,237.39) and (204.71,238.88) .. (202.41,238.42) .. controls (200.11,237.95) and (198.62,235.71) .. (199.08,233.41) -- cycle ;
%Shape: Ellipse [id:dp9015664616657586] 
\draw   (86,242.5) .. controls (86,261.55) and (130.88,277) .. (186.25,277) .. controls (241.62,277) and (286.5,261.55) .. (286.5,242.5) .. controls (286.5,223.45) and (241.62,208) .. (186.25,208) .. controls (130.88,208) and (86,223.45) .. (86,242.5) -- cycle ;
%Shape: Circle [id:dp10851698481081196] 
\draw  [color={rgb, 255:red, 208; green, 2; blue, 27 }  ,draw opacity=1 ][fill={rgb, 255:red, 208; green, 2; blue, 27 }  ,fill opacity=1 ] (141.36,131.25) .. controls (141.28,128.9) and (143.12,127) .. (145.47,127) .. controls (147.82,127) and (149.78,128.9) .. (149.86,131.25) .. controls (149.94,133.6) and (148.1,135.5) .. (145.75,135.5) .. controls (143.4,135.5) and (141.44,133.6) .. (141.36,131.25) -- cycle ;
%Shape: Circle [id:dp8339673252972051] 
\draw  [color={rgb, 255:red, 208; green, 2; blue, 27 }  ,draw opacity=1 ][fill={rgb, 255:red, 208; green, 2; blue, 27 }  ,fill opacity=1 ] (141.64,109.25) .. controls (141.56,106.9) and (143.4,105) .. (145.75,105) .. controls (148.1,105) and (150.06,106.9) .. (150.14,109.25) .. controls (150.22,111.6) and (148.38,113.5) .. (146.03,113.5) .. controls (143.68,113.5) and (141.72,111.6) .. (141.64,109.25) -- cycle ;
%Straight Lines [id:da1715927247888378] 
\draw [color={rgb, 255:red, 208; green, 2; blue, 27 }  ,draw opacity=1 ]   (145.89,109.25) -- (145.77,118.99) -- (145.61,131.25) ;
%Shape: Circle [id:dp5553048038499598] 
\draw  [color={rgb, 255:red, 208; green, 2; blue, 27 }  ,draw opacity=1 ][fill={rgb, 255:red, 208; green, 2; blue, 27 }  ,fill opacity=1 ] (141.36,178.25) .. controls (141.28,175.9) and (143.12,174) .. (145.47,174) .. controls (147.82,174) and (149.78,175.9) .. (149.86,178.25) .. controls (149.94,180.6) and (148.1,182.5) .. (145.75,182.5) .. controls (143.4,182.5) and (141.44,180.6) .. (141.36,178.25) -- cycle ;
%Shape: Circle [id:dp7301823364631214] 
\draw  [color={rgb, 255:red, 208; green, 2; blue, 27 }  ,draw opacity=1 ][fill={rgb, 255:red, 208; green, 2; blue, 27 }  ,fill opacity=1 ] (141.64,156.25) .. controls (141.56,153.9) and (143.4,152) .. (145.75,152) .. controls (148.1,152) and (150.06,153.9) .. (150.14,156.25) .. controls (150.22,158.6) and (148.38,160.5) .. (146.03,160.5) .. controls (143.68,160.5) and (141.72,158.6) .. (141.64,156.25) -- cycle ;
%Straight Lines [id:da4830927612071555] 
\draw [color={rgb, 255:red, 208; green, 2; blue, 27 }  ,draw opacity=1 ]   (145.89,156.25) -- (145.77,165.99) -- (145.61,178.25) ;
%Straight Lines [id:da12292526431844353] 
\draw [color={rgb, 255:red, 208; green, 2; blue, 27 }  ,draw opacity=1 ]   (146.17,134.25) -- (146.04,143.99) -- (145.89,156.25) ;
%Straight Lines [id:da5892022237242749] 
\draw [color={rgb, 255:red, 208; green, 2; blue, 27 }  ,draw opacity=1 ]   (145.75,177.5) -- (145.63,187.24) -- (145.5,228) ;
%Straight Lines [id:da351322192588851] 
\draw [color={rgb, 255:red, 208; green, 2; blue, 27 }  ,draw opacity=1 ]   (145.87,95.26) -- (145.75,105) -- (145.89,109.25) ;
%Shape: Circle [id:dp142852184943556] 
\draw  [color={rgb, 255:red, 208; green, 2; blue, 27 }  ,draw opacity=1 ][fill={rgb, 255:red, 208; green, 2; blue, 27 }  ,fill opacity=1 ] (200.36,135.25) .. controls (200.28,132.9) and (202.12,131) .. (204.47,131) .. controls (206.82,131) and (208.78,132.9) .. (208.86,135.25) .. controls (208.94,137.6) and (207.1,139.5) .. (204.75,139.5) .. controls (202.4,139.5) and (200.44,137.6) .. (200.36,135.25) -- cycle ;
%Shape: Circle [id:dp9678017593925394] 
\draw  [color={rgb, 255:red, 208; green, 2; blue, 27 }  ,draw opacity=1 ][fill={rgb, 255:red, 208; green, 2; blue, 27 }  ,fill opacity=1 ] (200.64,113.25) .. controls (200.56,110.9) and (202.4,109) .. (204.75,109) .. controls (207.1,109) and (209.06,110.9) .. (209.14,113.25) .. controls (209.22,115.6) and (207.38,117.5) .. (205.03,117.5) .. controls (202.68,117.5) and (200.72,115.6) .. (200.64,113.25) -- cycle ;
%Straight Lines [id:da5055160852316] 
\draw [color={rgb, 255:red, 208; green, 2; blue, 27 }  ,draw opacity=1 ]   (204.89,113.25) -- (204.77,122.99) -- (204.61,135.25) ;
%Shape: Circle [id:dp6917188966153562] 
\draw  [color={rgb, 255:red, 208; green, 2; blue, 27 }  ,draw opacity=1 ][fill={rgb, 255:red, 208; green, 2; blue, 27 }  ,fill opacity=1 ] (200.36,182.25) .. controls (200.28,179.9) and (202.12,178) .. (204.47,178) .. controls (206.82,178) and (208.78,179.9) .. (208.86,182.25) .. controls (208.94,184.6) and (207.1,186.5) .. (204.75,186.5) .. controls (202.4,186.5) and (200.44,184.6) .. (200.36,182.25) -- cycle ;
%Shape: Circle [id:dp9679422755589189] 
\draw  [color={rgb, 255:red, 208; green, 2; blue, 27 }  ,draw opacity=1 ][fill={rgb, 255:red, 208; green, 2; blue, 27 }  ,fill opacity=1 ] (200.64,160.25) .. controls (200.56,157.9) and (202.4,156) .. (204.75,156) .. controls (207.1,156) and (209.06,157.9) .. (209.14,160.25) .. controls (209.22,162.6) and (207.38,164.5) .. (205.03,164.5) .. controls (202.68,164.5) and (200.72,162.6) .. (200.64,160.25) -- cycle ;
%Straight Lines [id:da07818446018157521] 
\draw [color={rgb, 255:red, 208; green, 2; blue, 27 }  ,draw opacity=1 ]   (204.89,160.25) -- (204.77,169.99) -- (204.61,182.25) ;
%Straight Lines [id:da40694539361108417] 
\draw [color={rgb, 255:red, 208; green, 2; blue, 27 }  ,draw opacity=1 ]   (205.17,138.25) -- (205.04,147.99) -- (204.89,160.25) ;
%Straight Lines [id:da09904572478588214] 
\draw [color={rgb, 255:red, 208; green, 2; blue, 27 }  ,draw opacity=1 ]   (204.75,181.5) -- (204.63,191.24) -- (204.09,230.08) ;
%Straight Lines [id:da7727062363150107] 
\draw [color={rgb, 255:red, 208; green, 2; blue, 27 }  ,draw opacity=1 ]   (204.87,99.26) -- (204.75,109) -- (204.89,113.25) ;
%Straight Lines [id:da562872462790979] 
\draw    (145.5,232.75) -- (178.25,254.25) ;
%Straight Lines [id:da43502350514285915] 
\draw    (178.25,254.25) -- (203.25,234.25) ;
%Shape: Circle [id:dp2901951224573859] 
\draw  [color={rgb, 255:red, 208; green, 2; blue, 27 }  ,draw opacity=1 ][fill={rgb, 255:red, 208; green, 2; blue, 27 }  ,fill opacity=1 ] (174.36,126.25) .. controls (174.28,123.9) and (176.12,122) .. (178.47,122) .. controls (180.82,122) and (182.78,123.9) .. (182.86,126.25) .. controls (182.94,128.6) and (181.1,130.5) .. (178.75,130.5) .. controls (176.4,130.5) and (174.44,128.6) .. (174.36,126.25) -- cycle ;
%Shape: Circle [id:dp8903052393991012] 
\draw  [color={rgb, 255:red, 208; green, 2; blue, 27 }  ,draw opacity=1 ][fill={rgb, 255:red, 208; green, 2; blue, 27 }  ,fill opacity=1 ] (174.64,104.25) .. controls (174.56,101.9) and (176.4,100) .. (178.75,100) .. controls (181.1,100) and (183.06,101.9) .. (183.14,104.25) .. controls (183.22,106.6) and (181.38,108.5) .. (179.03,108.5) .. controls (176.68,108.5) and (174.72,106.6) .. (174.64,104.25) -- cycle ;
%Straight Lines [id:da7248211041547015] 
\draw [color={rgb, 255:red, 208; green, 2; blue, 27 }  ,draw opacity=1 ]   (178.89,104.25) -- (178.77,113.99) -- (178.61,126.25) ;
%Shape: Circle [id:dp7339821376085116] 
\draw  [color={rgb, 255:red, 208; green, 2; blue, 27 }  ,draw opacity=1 ][fill={rgb, 255:red, 208; green, 2; blue, 27 }  ,fill opacity=1 ] (174.36,173.25) .. controls (174.28,170.9) and (176.12,169) .. (178.47,169) .. controls (180.82,169) and (182.78,170.9) .. (182.86,173.25) .. controls (182.94,175.6) and (181.1,177.5) .. (178.75,177.5) .. controls (176.4,177.5) and (174.44,175.6) .. (174.36,173.25) -- cycle ;
%Shape: Circle [id:dp4577712953038968] 
\draw  [color={rgb, 255:red, 208; green, 2; blue, 27 }  ,draw opacity=1 ][fill={rgb, 255:red, 208; green, 2; blue, 27 }  ,fill opacity=1 ] (174.64,151.25) .. controls (174.56,148.9) and (176.4,147) .. (178.75,147) .. controls (181.1,147) and (183.06,148.9) .. (183.14,151.25) .. controls (183.22,153.6) and (181.38,155.5) .. (179.03,155.5) .. controls (176.68,155.5) and (174.72,153.6) .. (174.64,151.25) -- cycle ;
%Straight Lines [id:da17961226940840047] 
\draw [color={rgb, 255:red, 208; green, 2; blue, 27 }  ,draw opacity=1 ]   (178.89,151.25) -- (178.77,160.99) -- (178.61,173.25) ;
%Straight Lines [id:da5557286299555914] 
\draw [color={rgb, 255:red, 208; green, 2; blue, 27 }  ,draw opacity=1 ]   (179.17,129.25) -- (179.04,138.99) -- (178.89,151.25) ;
%Straight Lines [id:da2051782931502294] 
\draw [color={rgb, 255:red, 208; green, 2; blue, 27 }  ,draw opacity=1 ]   (178.75,172.5) -- (178.63,182.24) -- (178.25,250) ;
%Straight Lines [id:da09691156710929028] 
\draw [color={rgb, 255:red, 208; green, 2; blue, 27 }  ,draw opacity=1 ]   (178.87,90.26) -- (178.75,100) -- (178.89,104.25) ;
%Straight Lines [id:da38862451974733325] 
\draw    (182.5,254.25) -- (229.5,253) ;
%Shape: Circle [id:dp2698216631821587] 
\draw  [fill={rgb, 255:red, 0; green, 0; blue, 0 }  ,fill opacity=1 ] (225.25,253) .. controls (225.25,250.65) and (227.15,248.75) .. (229.5,248.75) .. controls (231.85,248.75) and (233.75,250.65) .. (233.75,253) .. controls (233.75,255.35) and (231.85,257.25) .. (229.5,257.25) .. controls (227.15,257.25) and (225.25,255.35) .. (225.25,253) -- cycle ;
%Shape: Circle [id:dp135885188231186] 
\draw  [color={rgb, 255:red, 208; green, 2; blue, 27 }  ,draw opacity=1 ][fill={rgb, 255:red, 208; green, 2; blue, 27 }  ,fill opacity=1 ] (226.36,149.25) .. controls (226.28,146.9) and (228.12,145) .. (230.47,145) .. controls (232.82,145) and (234.78,146.9) .. (234.86,149.25) .. controls (234.94,151.6) and (233.1,153.5) .. (230.75,153.5) .. controls (228.4,153.5) and (226.44,151.6) .. (226.36,149.25) -- cycle ;
%Shape: Circle [id:dp7485848083420028] 
\draw  [color={rgb, 255:red, 208; green, 2; blue, 27 }  ,draw opacity=1 ][fill={rgb, 255:red, 208; green, 2; blue, 27 }  ,fill opacity=1 ] (226.64,127.25) .. controls (226.56,124.9) and (228.4,123) .. (230.75,123) .. controls (233.1,123) and (235.06,124.9) .. (235.14,127.25) .. controls (235.22,129.6) and (233.38,131.5) .. (231.03,131.5) .. controls (228.68,131.5) and (226.72,129.6) .. (226.64,127.25) -- cycle ;
%Straight Lines [id:da03881610277000913] 
\draw [color={rgb, 255:red, 208; green, 2; blue, 27 }  ,draw opacity=1 ]   (230.89,127.25) -- (230.77,136.99) -- (230.61,149.25) ;
%Shape: Circle [id:dp706338506499921] 
\draw  [color={rgb, 255:red, 208; green, 2; blue, 27 }  ,draw opacity=1 ][fill={rgb, 255:red, 208; green, 2; blue, 27 }  ,fill opacity=1 ] (226.36,196.25) .. controls (226.28,193.9) and (228.12,192) .. (230.47,192) .. controls (232.82,192) and (234.78,193.9) .. (234.86,196.25) .. controls (234.94,198.6) and (233.1,200.5) .. (230.75,200.5) .. controls (228.4,200.5) and (226.44,198.6) .. (226.36,196.25) -- cycle ;
%Shape: Circle [id:dp2293242539680349] 
\draw  [color={rgb, 255:red, 208; green, 2; blue, 27 }  ,draw opacity=1 ][fill={rgb, 255:red, 208; green, 2; blue, 27 }  ,fill opacity=1 ] (226.64,174.25) .. controls (226.56,171.9) and (228.4,170) .. (230.75,170) .. controls (233.1,170) and (235.06,171.9) .. (235.14,174.25) .. controls (235.22,176.6) and (233.38,178.5) .. (231.03,178.5) .. controls (228.68,178.5) and (226.72,176.6) .. (226.64,174.25) -- cycle ;
%Straight Lines [id:da7481117528790571] 
\draw [color={rgb, 255:red, 208; green, 2; blue, 27 }  ,draw opacity=1 ]   (230.89,174.25) -- (230.77,183.99) -- (230.61,196.25) ;
%Straight Lines [id:da7273886219556006] 
\draw [color={rgb, 255:red, 208; green, 2; blue, 27 }  ,draw opacity=1 ]   (231.17,152.25) -- (231.04,161.99) -- (230.89,174.25) ;
%Straight Lines [id:da27641310988424983] 
\draw [color={rgb, 255:red, 208; green, 2; blue, 27 }  ,draw opacity=1 ]   (230.75,195.5) -- (230.63,205.24) -- (229.25,248.25) ;
%Straight Lines [id:da5403537501982357] 
\draw [color={rgb, 255:red, 208; green, 2; blue, 27 }  ,draw opacity=1 ]   (230.87,113.26) -- (230.75,123) -- (230.89,127.25) ;
%Straight Lines [id:da7695517408036888] 
\draw    (130.5,223) -- (143.25,232.75) ;
%Straight Lines [id:da4332866592471838] 
\draw    (145.5,234) -- (126.5,235) ;
%Straight Lines [id:da2554270240512111] 
\draw    (245.83,242) -- (231.03,251.75) ;
%Straight Lines [id:da19783945677170034] 
\draw    (234.5,253) -- (251.5,253) ;
%Straight Lines [id:da07563988688620216] 
\draw    (222.22,234.34) -- (207.42,235.09) ;
%Straight Lines [id:da9167560035621846] 
\draw    (218.9,224.33) -- (204.09,234.08) ;

\end{tikzpicture}
}  
 \resizebox{.3\textwidth}{!}{

\tikzset{every picture/.style={line width=0.75pt}} %set default line width to 0.75pt        

\begin{tikzpicture}[x=0.75pt,y=0.75pt,yscale=-1,xscale=1]
%uncomment if require: \path (0,300); %set diagram left start at 0, and has height of 300

%Shape: Ellipse [id:dp563505254629851] 
\draw   (100,142.5) .. controls (100,121.24) and (151.71,104) .. (215.5,104) .. controls (279.29,104) and (331,121.24) .. (331,142.5) .. controls (331,163.76) and (279.29,181) .. (215.5,181) .. controls (151.71,181) and (100,163.76) .. (100,142.5) -- cycle ;
%Straight Lines [id:da8932565268917714] 
\draw    (141,141) -- (293,142) ;
%Straight Lines [id:da8868219193487027] 
\draw    (217,141.5) -- (256,120) ;
%Straight Lines [id:da6289160220197462] 
\draw    (215.5,141.5) -- (176,120) ;
%Straight Lines [id:da4800501356484894] 
\draw    (256,120) -- (265,127) ;
%Straight Lines [id:da10921562449563427] 
\draw    (141,141) -- (132,150) ;
%Straight Lines [id:da9487712067441404] 
\draw    (176,120) -- (168,128) ;
%Straight Lines [id:da3929343906669486] 
\draw    (176,120) -- (164,120) ;
%Straight Lines [id:da3767152580027807] 
\draw    (268,120) -- (256,120) ;
%Straight Lines [id:da552679114497161] 
\draw    (265,114) -- (256,120) ;
%Straight Lines [id:da6726066325622646] 
\draw    (293,142) -- (302,149) ;
%Straight Lines [id:da6769829973881446] 
\draw    (168,112) -- (176,120) ;
%Straight Lines [id:da6104497651784522] 
\draw    (141,141) -- (129,141) ;
%Straight Lines [id:da7880503217083507] 
\draw    (132,133) -- (141,141) ;
%Straight Lines [id:da9574927930870788] 
\draw    (305,142) -- (293,142) ;
%Straight Lines [id:da9220829124230346] 
\draw    (301,135) -- (293,142) ;
%Shape: Circle [id:dp2531828095976161] 
\draw  [fill={rgb, 255:red, 0; green, 0; blue, 0 }  ,fill opacity=1 ] (136,141) .. controls (136,139.62) and (137.12,138.5) .. (138.5,138.5) .. controls (139.88,138.5) and (141,139.62) .. (141,141) .. controls (141,142.38) and (139.88,143.5) .. (138.5,143.5) .. controls (137.12,143.5) and (136,142.38) .. (136,141) -- cycle ;
%Shape: Circle [id:dp5082144509238644] 
\draw  [fill={rgb, 255:red, 0; green, 0; blue, 0 }  ,fill opacity=1 ] (173.5,120.5) .. controls (173.5,119.12) and (174.62,118) .. (176,118) .. controls (177.38,118) and (178.5,119.12) .. (178.5,120.5) .. controls (178.5,121.88) and (177.38,123) .. (176,123) .. controls (174.62,123) and (173.5,121.88) .. (173.5,120.5) -- cycle ;
%Shape: Circle [id:dp6792072243878619] 
\draw  [fill={rgb, 255:red, 0; green, 0; blue, 0 }  ,fill opacity=1 ] (213,142) .. controls (213,140.62) and (214.12,139.5) .. (215.5,139.5) .. controls (216.88,139.5) and (218,140.62) .. (218,142) .. controls (218,143.38) and (216.88,144.5) .. (215.5,144.5) .. controls (214.12,144.5) and (213,143.38) .. (213,142) -- cycle ;
%Shape: Circle [id:dp9846260780640548] 
\draw  [fill={rgb, 255:red, 0; green, 0; blue, 0 }  ,fill opacity=1 ] (253.5,120.5) .. controls (253.5,119.12) and (254.62,118) .. (256,118) .. controls (257.38,118) and (258.5,119.12) .. (258.5,120.5) .. controls (258.5,121.88) and (257.38,123) .. (256,123) .. controls (254.62,123) and (253.5,121.88) .. (253.5,120.5) -- cycle ;
%Shape: Circle [id:dp7349169862121815] 
\draw  [fill={rgb, 255:red, 0; green, 0; blue, 0 }  ,fill opacity=1 ] (290.5,142.5) .. controls (290.5,141.12) and (291.62,140) .. (293,140) .. controls (294.38,140) and (295.5,141.12) .. (295.5,142.5) .. controls (295.5,143.88) and (294.38,145) .. (293,145) .. controls (291.62,145) and (290.5,143.88) .. (290.5,142.5) -- cycle ;
%Straight Lines [id:da13710192491263373] 
\draw [color={rgb, 255:red, 208; green, 2; blue, 27 }  ,draw opacity=1 ][fill={rgb, 255:red, 208; green, 2; blue, 27 }  ,fill opacity=1 ]   (139,85) -- (139,138) ;
%Shape: Circle [id:dp7977019659191733] 
\draw  [color={rgb, 255:red, 208; green, 2; blue, 27 }  ,draw opacity=1 ][fill={rgb, 255:red, 208; green, 2; blue, 27 }  ,fill opacity=1 ] (136.5,87.5) .. controls (136.5,86.12) and (137.62,85) .. (139,85) .. controls (140.38,85) and (141.5,86.12) .. (141.5,87.5) .. controls (141.5,88.88) and (140.38,90) .. (139,90) .. controls (137.62,90) and (136.5,88.88) .. (136.5,87.5) -- cycle ;
%Straight Lines [id:da43325447634944714] 
\draw [color={rgb, 255:red, 208; green, 2; blue, 27 }  ,draw opacity=1 ][fill={rgb, 255:red, 208; green, 2; blue, 27 }  ,fill opacity=1 ]   (152,71) -- (139,87.5) ;
%Shape: Circle [id:dp6009472961221994] 
\draw  [color={rgb, 255:red, 208; green, 2; blue, 27 }  ,draw opacity=1 ][fill={rgb, 255:red, 208; green, 2; blue, 27 }  ,fill opacity=1 ] (148.5,72.5) .. controls (148.5,71.12) and (149.62,70) .. (151,70) .. controls (152.38,70) and (153.5,71.12) .. (153.5,72.5) .. controls (153.5,73.88) and (152.38,75) .. (151,75) .. controls (149.62,75) and (148.5,73.88) .. (148.5,72.5) -- cycle ;
%Straight Lines [id:da2133299853053403] 
\draw [color={rgb, 255:red, 208; green, 2; blue, 27 }  ,draw opacity=1 ][fill={rgb, 255:red, 208; green, 2; blue, 27 }  ,fill opacity=1 ]   (127,72) -- (139,87.5) ;
%Straight Lines [id:da3791437349205592] 
\draw [color={rgb, 255:red, 208; green, 2; blue, 27 }  ,draw opacity=1 ][fill={rgb, 255:red, 208; green, 2; blue, 27 }  ,fill opacity=1 ]   (132,59) -- (127,72) ;
%Shape: Circle [id:dp030252049053400287] 
\draw  [color={rgb, 255:red, 208; green, 2; blue, 27 }  ,draw opacity=1 ][fill={rgb, 255:red, 208; green, 2; blue, 27 }  ,fill opacity=1 ] (125,72) .. controls (125,70.62) and (126.12,69.5) .. (127.5,69.5) .. controls (128.88,69.5) and (130,70.62) .. (130,72) .. controls (130,73.38) and (128.88,74.5) .. (127.5,74.5) .. controls (126.12,74.5) and (125,73.38) .. (125,72) -- cycle ;
%Straight Lines [id:da000239746461372925] 
\draw [color={rgb, 255:red, 208; green, 2; blue, 27 }  ,draw opacity=1 ][fill={rgb, 255:red, 208; green, 2; blue, 27 }  ,fill opacity=1 ]   (122,59) -- (127.5,74.5) ;
%Straight Lines [id:da9629443194663311] 
\draw [color={rgb, 255:red, 208; green, 2; blue, 27 }  ,draw opacity=1 ][fill={rgb, 255:red, 208; green, 2; blue, 27 }  ,fill opacity=1 ]   (157,58) -- (152,71) ;
%Straight Lines [id:da5346882121613288] 
\draw [color={rgb, 255:red, 208; green, 2; blue, 27 }  ,draw opacity=1 ][fill={rgb, 255:red, 208; green, 2; blue, 27 }  ,fill opacity=1 ]   (147,58) -- (152.5,73.5) ;
%Straight Lines [id:da7810697168461476] 
\draw [color={rgb, 255:red, 208; green, 2; blue, 27 }  ,draw opacity=1 ][fill={rgb, 255:red, 208; green, 2; blue, 27 }  ,fill opacity=1 ]   (176,65) -- (176,118) ;
%Shape: Circle [id:dp8469931490030765] 
\draw  [color={rgb, 255:red, 208; green, 2; blue, 27 }  ,draw opacity=1 ][fill={rgb, 255:red, 208; green, 2; blue, 27 }  ,fill opacity=1 ] (173.5,67.5) .. controls (173.5,66.12) and (174.62,65) .. (176,65) .. controls (177.38,65) and (178.5,66.12) .. (178.5,67.5) .. controls (178.5,68.88) and (177.38,70) .. (176,70) .. controls (174.62,70) and (173.5,68.88) .. (173.5,67.5) -- cycle ;
%Straight Lines [id:da3940146091371416] 
\draw [color={rgb, 255:red, 208; green, 2; blue, 27 }  ,draw opacity=1 ][fill={rgb, 255:red, 208; green, 2; blue, 27 }  ,fill opacity=1 ]   (189,51) -- (176,67.5) ;
%Shape: Circle [id:dp044938874997583644] 
\draw  [color={rgb, 255:red, 208; green, 2; blue, 27 }  ,draw opacity=1 ][fill={rgb, 255:red, 208; green, 2; blue, 27 }  ,fill opacity=1 ] (185.5,52.5) .. controls (185.5,51.12) and (186.62,50) .. (188,50) .. controls (189.38,50) and (190.5,51.12) .. (190.5,52.5) .. controls (190.5,53.88) and (189.38,55) .. (188,55) .. controls (186.62,55) and (185.5,53.88) .. (185.5,52.5) -- cycle ;
%Straight Lines [id:da5586736697544931] 
\draw [color={rgb, 255:red, 208; green, 2; blue, 27 }  ,draw opacity=1 ][fill={rgb, 255:red, 208; green, 2; blue, 27 }  ,fill opacity=1 ]   (164,52) -- (176,67.5) ;
%Straight Lines [id:da9082960107400596] 
\draw [color={rgb, 255:red, 208; green, 2; blue, 27 }  ,draw opacity=1 ][fill={rgb, 255:red, 208; green, 2; blue, 27 }  ,fill opacity=1 ]   (169,39) -- (164,52) ;
%Shape: Circle [id:dp08147998019307134] 
\draw  [color={rgb, 255:red, 208; green, 2; blue, 27 }  ,draw opacity=1 ][fill={rgb, 255:red, 208; green, 2; blue, 27 }  ,fill opacity=1 ] (162,52) .. controls (162,50.62) and (163.12,49.5) .. (164.5,49.5) .. controls (165.88,49.5) and (167,50.62) .. (167,52) .. controls (167,53.38) and (165.88,54.5) .. (164.5,54.5) .. controls (163.12,54.5) and (162,53.38) .. (162,52) -- cycle ;
%Straight Lines [id:da6732244741558446] 
\draw [color={rgb, 255:red, 208; green, 2; blue, 27 }  ,draw opacity=1 ][fill={rgb, 255:red, 208; green, 2; blue, 27 }  ,fill opacity=1 ]   (159,39) -- (164.5,54.5) ;
%Straight Lines [id:da4468328324205092] 
\draw [color={rgb, 255:red, 208; green, 2; blue, 27 }  ,draw opacity=1 ][fill={rgb, 255:red, 208; green, 2; blue, 27 }  ,fill opacity=1 ]   (194,38) -- (189,51) ;
%Straight Lines [id:da5700090502875389] 
\draw [color={rgb, 255:red, 208; green, 2; blue, 27 }  ,draw opacity=1 ][fill={rgb, 255:red, 208; green, 2; blue, 27 }  ,fill opacity=1 ]   (184,38) -- (189.5,53.5) ;
%Straight Lines [id:da712200826467607] 
\draw [color={rgb, 255:red, 208; green, 2; blue, 27 }  ,draw opacity=1 ][fill={rgb, 255:red, 208; green, 2; blue, 27 }  ,fill opacity=1 ]   (215.5,86) -- (215.5,139) ;
%Shape: Circle [id:dp5451281857171769] 
\draw  [color={rgb, 255:red, 208; green, 2; blue, 27 }  ,draw opacity=1 ][fill={rgb, 255:red, 208; green, 2; blue, 27 }  ,fill opacity=1 ] (213,88.5) .. controls (213,87.12) and (214.12,86) .. (215.5,86) .. controls (216.88,86) and (218,87.12) .. (218,88.5) .. controls (218,89.88) and (216.88,91) .. (215.5,91) .. controls (214.12,91) and (213,89.88) .. (213,88.5) -- cycle ;
%Straight Lines [id:da5009187840447745] 
\draw [color={rgb, 255:red, 208; green, 2; blue, 27 }  ,draw opacity=1 ][fill={rgb, 255:red, 208; green, 2; blue, 27 }  ,fill opacity=1 ]   (228.5,72) -- (215.5,88.5) ;
%Shape: Circle [id:dp6110389686039585] 
\draw  [color={rgb, 255:red, 208; green, 2; blue, 27 }  ,draw opacity=1 ][fill={rgb, 255:red, 208; green, 2; blue, 27 }  ,fill opacity=1 ] (225,73.5) .. controls (225,72.12) and (226.12,71) .. (227.5,71) .. controls (228.88,71) and (230,72.12) .. (230,73.5) .. controls (230,74.88) and (228.88,76) .. (227.5,76) .. controls (226.12,76) and (225,74.88) .. (225,73.5) -- cycle ;
%Straight Lines [id:da7711005296723301] 
\draw [color={rgb, 255:red, 208; green, 2; blue, 27 }  ,draw opacity=1 ][fill={rgb, 255:red, 208; green, 2; blue, 27 }  ,fill opacity=1 ]   (203.5,73) -- (215.5,88.5) ;
%Straight Lines [id:da557152147541144] 
\draw [color={rgb, 255:red, 208; green, 2; blue, 27 }  ,draw opacity=1 ][fill={rgb, 255:red, 208; green, 2; blue, 27 }  ,fill opacity=1 ]   (208.5,60) -- (203.5,73) ;
%Shape: Circle [id:dp9435606159727102] 
\draw  [color={rgb, 255:red, 208; green, 2; blue, 27 }  ,draw opacity=1 ][fill={rgb, 255:red, 208; green, 2; blue, 27 }  ,fill opacity=1 ] (201.5,73) .. controls (201.5,71.62) and (202.62,70.5) .. (204,70.5) .. controls (205.38,70.5) and (206.5,71.62) .. (206.5,73) .. controls (206.5,74.38) and (205.38,75.5) .. (204,75.5) .. controls (202.62,75.5) and (201.5,74.38) .. (201.5,73) -- cycle ;
%Straight Lines [id:da8028856293953804] 
\draw [color={rgb, 255:red, 208; green, 2; blue, 27 }  ,draw opacity=1 ][fill={rgb, 255:red, 208; green, 2; blue, 27 }  ,fill opacity=1 ]   (198.5,60) -- (204,75.5) ;
%Straight Lines [id:da7321521232514194] 
\draw [color={rgb, 255:red, 208; green, 2; blue, 27 }  ,draw opacity=1 ][fill={rgb, 255:red, 208; green, 2; blue, 27 }  ,fill opacity=1 ]   (233.5,59) -- (228.5,72) ;
%Straight Lines [id:da6201527688758841] 
\draw [color={rgb, 255:red, 208; green, 2; blue, 27 }  ,draw opacity=1 ][fill={rgb, 255:red, 208; green, 2; blue, 27 }  ,fill opacity=1 ]   (223.5,59) -- (229,74.5) ;
%Straight Lines [id:da980264381802629] 
\draw [color={rgb, 255:red, 208; green, 2; blue, 27 }  ,draw opacity=1 ][fill={rgb, 255:red, 208; green, 2; blue, 27 }  ,fill opacity=1 ]   (256,65) -- (256,118) ;
%Shape: Circle [id:dp9568687118552273] 
\draw  [color={rgb, 255:red, 208; green, 2; blue, 27 }  ,draw opacity=1 ][fill={rgb, 255:red, 208; green, 2; blue, 27 }  ,fill opacity=1 ] (253.5,67.5) .. controls (253.5,66.12) and (254.62,65) .. (256,65) .. controls (257.38,65) and (258.5,66.12) .. (258.5,67.5) .. controls (258.5,68.88) and (257.38,70) .. (256,70) .. controls (254.62,70) and (253.5,68.88) .. (253.5,67.5) -- cycle ;
%Straight Lines [id:da7663468797076153] 
\draw [color={rgb, 255:red, 208; green, 2; blue, 27 }  ,draw opacity=1 ][fill={rgb, 255:red, 208; green, 2; blue, 27 }  ,fill opacity=1 ]   (269,51) -- (256,67.5) ;
%Shape: Circle [id:dp3380210801120702] 
\draw  [color={rgb, 255:red, 208; green, 2; blue, 27 }  ,draw opacity=1 ][fill={rgb, 255:red, 208; green, 2; blue, 27 }  ,fill opacity=1 ] (265.5,52.5) .. controls (265.5,51.12) and (266.62,50) .. (268,50) .. controls (269.38,50) and (270.5,51.12) .. (270.5,52.5) .. controls (270.5,53.88) and (269.38,55) .. (268,55) .. controls (266.62,55) and (265.5,53.88) .. (265.5,52.5) -- cycle ;
%Straight Lines [id:da37039066798226794] 
\draw [color={rgb, 255:red, 208; green, 2; blue, 27 }  ,draw opacity=1 ][fill={rgb, 255:red, 208; green, 2; blue, 27 }  ,fill opacity=1 ]   (244,52) -- (256,67.5) ;
%Straight Lines [id:da4383678547154314] 
\draw [color={rgb, 255:red, 208; green, 2; blue, 27 }  ,draw opacity=1 ][fill={rgb, 255:red, 208; green, 2; blue, 27 }  ,fill opacity=1 ]   (249,39) -- (244,52) ;
%Shape: Circle [id:dp21503388654232514] 
\draw  [color={rgb, 255:red, 208; green, 2; blue, 27 }  ,draw opacity=1 ][fill={rgb, 255:red, 208; green, 2; blue, 27 }  ,fill opacity=1 ] (242,52) .. controls (242,50.62) and (243.12,49.5) .. (244.5,49.5) .. controls (245.88,49.5) and (247,50.62) .. (247,52) .. controls (247,53.38) and (245.88,54.5) .. (244.5,54.5) .. controls (243.12,54.5) and (242,53.38) .. (242,52) -- cycle ;
%Straight Lines [id:da9353997830062813] 
\draw [color={rgb, 255:red, 208; green, 2; blue, 27 }  ,draw opacity=1 ][fill={rgb, 255:red, 208; green, 2; blue, 27 }  ,fill opacity=1 ]   (239,39) -- (244.5,54.5) ;
%Straight Lines [id:da6772534020058983] 
\draw [color={rgb, 255:red, 208; green, 2; blue, 27 }  ,draw opacity=1 ][fill={rgb, 255:red, 208; green, 2; blue, 27 }  ,fill opacity=1 ]   (274,38) -- (269,51) ;
%Straight Lines [id:da18834368375544863] 
\draw [color={rgb, 255:red, 208; green, 2; blue, 27 }  ,draw opacity=1 ][fill={rgb, 255:red, 208; green, 2; blue, 27 }  ,fill opacity=1 ]   (264,38) -- (269.5,53.5) ;
%Straight Lines [id:da7852184752857533] 
\draw [color={rgb, 255:red, 208; green, 2; blue, 27 }  ,draw opacity=1 ][fill={rgb, 255:red, 208; green, 2; blue, 27 }  ,fill opacity=1 ]   (293,87) -- (293,140) ;
%Shape: Circle [id:dp8537351559311236] 
\draw  [color={rgb, 255:red, 208; green, 2; blue, 27 }  ,draw opacity=1 ][fill={rgb, 255:red, 208; green, 2; blue, 27 }  ,fill opacity=1 ] (290.5,89.5) .. controls (290.5,88.12) and (291.62,87) .. (293,87) .. controls (294.38,87) and (295.5,88.12) .. (295.5,89.5) .. controls (295.5,90.88) and (294.38,92) .. (293,92) .. controls (291.62,92) and (290.5,90.88) .. (290.5,89.5) -- cycle ;
%Straight Lines [id:da9172443201915716] 
\draw [color={rgb, 255:red, 208; green, 2; blue, 27 }  ,draw opacity=1 ][fill={rgb, 255:red, 208; green, 2; blue, 27 }  ,fill opacity=1 ]   (306,73) -- (293,89.5) ;
%Shape: Circle [id:dp20691010866015624] 
\draw  [color={rgb, 255:red, 208; green, 2; blue, 27 }  ,draw opacity=1 ][fill={rgb, 255:red, 208; green, 2; blue, 27 }  ,fill opacity=1 ] (302.5,74.5) .. controls (302.5,73.12) and (303.62,72) .. (305,72) .. controls (306.38,72) and (307.5,73.12) .. (307.5,74.5) .. controls (307.5,75.88) and (306.38,77) .. (305,77) .. controls (303.62,77) and (302.5,75.88) .. (302.5,74.5) -- cycle ;
%Straight Lines [id:da23772078161075705] 
\draw [color={rgb, 255:red, 208; green, 2; blue, 27 }  ,draw opacity=1 ][fill={rgb, 255:red, 208; green, 2; blue, 27 }  ,fill opacity=1 ]   (281,74) -- (293,89.5) ;
%Straight Lines [id:da29252271701214483] 
\draw [color={rgb, 255:red, 208; green, 2; blue, 27 }  ,draw opacity=1 ][fill={rgb, 255:red, 208; green, 2; blue, 27 }  ,fill opacity=1 ]   (286,61) -- (281,74) ;
%Shape: Circle [id:dp3496688918003099] 
\draw  [color={rgb, 255:red, 208; green, 2; blue, 27 }  ,draw opacity=1 ][fill={rgb, 255:red, 208; green, 2; blue, 27 }  ,fill opacity=1 ] (279,74) .. controls (279,72.62) and (280.12,71.5) .. (281.5,71.5) .. controls (282.88,71.5) and (284,72.62) .. (284,74) .. controls (284,75.38) and (282.88,76.5) .. (281.5,76.5) .. controls (280.12,76.5) and (279,75.38) .. (279,74) -- cycle ;
%Straight Lines [id:da292976133917475] 
\draw [color={rgb, 255:red, 208; green, 2; blue, 27 }  ,draw opacity=1 ][fill={rgb, 255:red, 208; green, 2; blue, 27 }  ,fill opacity=1 ]   (276,61) -- (281.5,76.5) ;
%Straight Lines [id:da8095300484915079] 
\draw [color={rgb, 255:red, 208; green, 2; blue, 27 }  ,draw opacity=1 ][fill={rgb, 255:red, 208; green, 2; blue, 27 }  ,fill opacity=1 ]   (311,60) -- (306,73) ;
%Straight Lines [id:da5783875586285452] 
\draw [color={rgb, 255:red, 208; green, 2; blue, 27 }  ,draw opacity=1 ][fill={rgb, 255:red, 208; green, 2; blue, 27 }  ,fill opacity=1 ]   (301,60) -- (306.5,75.5) ;
\end{tikzpicture}
}

\caption{The trees $X(4,2)$  and $X(5,3)$.}\label{fig:examples}
\end{figure}
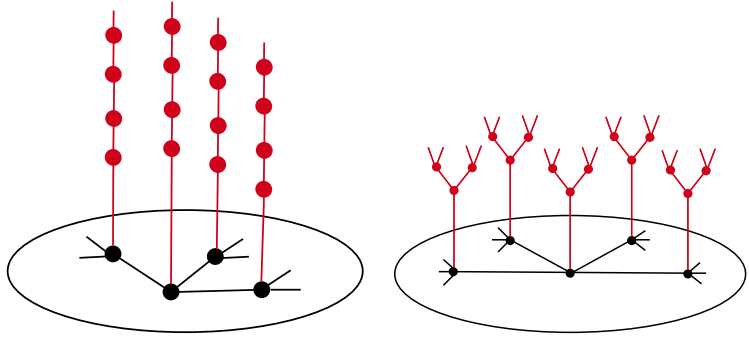

The cop strategy $S_c^\ast$ of reducing the distance between the cop and the robber in every sober cop move is also a sensible strategy on the trees $X(\Delta,\delta)$ (and in fact on any tree).
On the other hand, there are at least two intuitively sensible robber strategies on $X(\Delta,\delta)$. We denote these strategies by \RSA \ and \RSB. In both \RSA \ and \RSB, if the robber is in the base tree $B$, they increase the distance between the two players while staying in $B$.
In a small tree $T$, a robber playing strategy \RSA \ will keep increasing the distance from the cop by picking a neighbor of $\R_{\tim}$ which does not lie on the unique path connecting $\R_{\tim}$ to $\C_{\tim}$; among the possible choices they pick one uniformly at random. A robber playing strategy \RSB \ will opt to backtrack toward the base tree $B$, that is,  pick the unique  neighbor of $\R_{\tim}$ that lies closer to the base tree $B$ than $\R_{\tim}$.   
For different values of $\Delta$ and $\delta,$ we will see that there are certain scenarios in $X(\Delta,\delta)$ where \RSA \ is preferable to \RSB \ and vice versa. 
To simplify the analysis of the robber strategies we also introduce a cop strategy \CSB\ which is similar to \RSB: On the base tree $B$, a cop playing strategy \CSB\ moves towards the copy of $T$ in which the robber is located (and picks a uniform random neighbor in the base tree, if the cop and the robber are located in the same copy). If the cop is not located in the base tree $B$, then similarly to a robber playing \RSB\, they move to the unique  neighbor of $\C_{\tim}$ that lies closer to the base tree $B$ than $\C_{\tim}$. We point out that the only situation in which \CSB\ might differ from \CS\ is when the two players are on the same small tree $T$, and the  only advantage of \CSB\ over \CS\ is that it is easier to analyse against \RSB.

The main result of this section shows that it depends on the values of $\Delta$, $\delta$, $p_c^t$, and $p_r^t$, which of the two strategies \RSA\ and \RSB\ is better for the robber. We will give precise bounds on the tipsiness parameters for which the robber is guaranteed to win playing either of the two strategies, see Theorems \ref{thm:robbersober} and \ref{thm:RSB}. These bounds imply the following result.

\begin{theorem} \label{theorem:strategy_summary}
Consider the game on $X(\Delta,\delta)$ with $\Delta \geq 4$ and assume that $p_c^s+p_c^t = p_r^s + p_r^t= 1/2$. There is some $p_0 \in [0,\frac{\delta}{4(\delta-1)}]$ such that
\begin{enumerate}
    \item if $p_r^t \leq p_0$ and \RSA\ is winning against every cop strategy for some fixed $p_c^t$, then so is \RSB, and
    \item if $p_r^t > p_0$ and \RSB\ is winning against every cop strategy for some fixed $p_c^t$, then so is \RSA.
\end{enumerate}
If $\delta=2$, then $p_0 = \frac 12$. If $2 \delta \geq \Delta+1$, then $p_0 = 0$. Otherwise the value of $p_0$ lies strictly between $0$ and $\frac 12$ and can be determined by solving a quadratic equation whose coefficients depend on $\delta$ and $\Delta$.
\end{theorem}

We note that the restriction $\Delta \geq 4$ only rules out the infinite comb $X(3,2)$. %For $X(3,3)$, Lemma \ref{lemma:delta} tells us when each player has a winning strategy. 
For $X(3,2)$, the robber strategy \RSA\ is winning if $p_c^t > p_r^t$ by the same argument as in the proof of Lemma \ref{lem:robber_wins}. If $p_c^t < p_r^t$, then \CS\ is easily seen to be winning by a slight modification of the argument in the proof of the second part of Theorem~\ref{thm:robbersober}: the cop will almost surely get to the same small tree as the robber, and once this happens there is a positive probability that the cop will not leave this tree before catching the robber. In particular the only case for which Theorem~\ref{theorem:strategy_summary} does not provide an answer is not especially difficult.

\subsection{Analysis of \RSA\ and \RSB}

The goal of this section is to provide conditions for when \RSA\ and \RSB\ are winning on $X(\Delta,\delta)$. The main results of this section are Theorem \ref{thm:robbersober} which tells us exactly when \RSA\ is winning and Theorem \ref{thm:RSB} which tells us in almost all cases whether \RSB\ is winning.

We start by setting up the notation we use to analyse the game on $X(\Delta,\delta)$. Throughout our analysis, we keep track of the projections of the cop's and robber's position to the base tree, in other words we consider the auxiliary random process $(\C_{\tim}',\R_{\tim}')$ where $\C_{\tim}'$ is the vertex of $B$ which is closest to $\C_{\tim}$, and $\R_{\tim}'$ is the vertex of $B$ which is closest to $\R_{\tim}$. 
%Moreover, define $H_{c,\tim}$ as the distance between $\C_{\tim}$ and $\C_{\tim}'$, and $H_{r,\tim}$ as the distance between $\R_{\tim}$ and $\R_{\tim}'$; these are the height at which the cop and robber are in the $\delta$-regular tree. 
Further denote by $T_c(n)$ the (random) time step at which the cop makes their $n$-th move on the base tree $B$, that is, $T_c(0) = \inf\{\tim \geq 0 \mid \C_{\tim-1}' \neq \C_{\tim}'\}$, and $T_c(n+1) = \inf\{\tim > T_c(n) \mid \C_{\tim-1}' \neq \C_{\tim}'\}$. 
Moreover, whenever $T_c(n) < \infty$,  we introduce an additional random variable $F_c(n)$ which take the value~$1$ if the move at time $T_c(n)$ increases the distance between the cop and the robber, and $-1$ if it decreases the distance. Analogously, define $T_r(n)$ and $F_r(n)$ for the robber.

Our analysis of the game is based on the total change in distance between $\C_{\tim}'$ and $\R_{\tim}'$ up to time step $\tim$, which is equal to
\begin{align}
Y_{\tim}:=\sum_{n: T_c(n) < \tim} F_c(n) + \sum_{n: T_r(n) < \tim} F_r(n).\label{eq1}
\end{align} 
The basic idea behind our analysis is that if $Y_{\tim}$ is positive for every $\tim$, then the distance between $\C_{\tim}'$ and $\R_{\tim}'$ and thus also the distance between $\C_{\tim}$ and $\R_{\tim}$ never reaches zero. On the other hand, if there are infinitely many time steps $\tim$ such that $Y_{\tim}=0$, then there are infinitely many time steps $\tim$ such that $\C_{\tim}'=\R_{\tim}'$. If for infinitely many of these steps the distance of both of the two players to the base tree $B$ is bounded, then the cop almost surely wins the game because every time this happens there is a positive probability that the cop wins the game within some bounded number of moves.

In order to bound $Y_{\tim}$ it will be useful to bound the individual summands in Equation~\eqref{eq1} by two auxiliary sequences of i.i.d.\ Bernoulli random variables. 

To define these random variables, we first restrict the allowed enumerations of neighbors in the construction of $\Q_{\tim}$. Let $(u,v)$ be a pair of vertices, where $u$ lies in the base tree. We assume that the enumeration of the neighbors of $u$ with respect to a pair $(u,v)$ is always chosen such that $u_1$ is a neighbor of $u$ in the base tree which minimizes the distance to $v$. Note that if $v$ lies in the small tree attached to $u$, then $u_1$ can be any neighbor of $u$ in the base tree, otherwise this rule uniquely determines $u_1$. Moreover, we let the last element $u_\Delta$ in the enumeration be the unique neighbor of $u$ in the small tree attached to $u$.

We now define
\[
    \tilde F_{c}(n) =
    \begin{cases}
        -1&\text{if } M_{T_{c}(n)} = \mathrm{cs} , \text{ or } M_{T_{c}(n)} = \mathrm{ct}  \text{ and } X_{T_{c}(n)} < \frac{1}{\Delta},\\
        1&\text{otherwise.}
    \end{cases}
\]
In particular, $\tilde F_{c}(n) = 1$ if $T_c(n) = \infty$. 

Letting $A_{\tim}$ denote the event that the cop's position at time step $\tim-1$ is in the base tree,  we compute
\begin{align*}
   \Prob[\tilde F_c(n)=-1 \mid (T_c(n) = \tim)]
   &= \Prob[\tilde F_c(n)=-1 \mid  T_c(n) = \tim \land A_\tim] \\
   &= \frac{\Prob[\tilde F_c(n)=-1 \land T_c(n) = \tim \mid A_\tim]}{\Prob[ T_c(n) = \tim \mid A_\tim]} \\
   %&= \frac{\Prob[M_{\tim} = \mathrm{cs}  \lor (M_{\tim} = \mathrm{ct}  \land X_{\tim} \geq \frac 1\Delta)]}{\Prob[M_{\tim} = (c,r) \lor (M_{\tim} = \mathrm{ct}  \land X_{\tim} < \frac{\Delta-1}{\Delta}]}\\
   &=\frac{p_c^s + \frac{1}{\Delta}p_c^t}{p_c^s-\frac{(\Delta-1)}{\Delta}p_c^t} \\
   &= \frac{1-\frac{2\Delta-2}{\Delta}p_c^t}{1 - \frac 2\Delta p_c^t}.
\end{align*}
Note that this probability does not depend on $\tim$, hence 
\[
\Prob[\tilde F_c(n)=-1 \mid T_c(n) < \infty] = \frac{1-\frac{2\Delta-2}{\Delta}p_c^t}{1 - \frac 2\Delta p_c^t}
\] 
and consequently, 
\[
     \E[\tilde F_c(n) \mid T_c(n) < \infty] = - \frac{1 - \frac{4\Delta-6}{\Delta}p_c^t}{1 - \frac{2}{\Delta}p_c^t}.
\]
We also note that conditional on the event that $T_c(n_0) < \infty$, the random variables $\tilde F_c(n)$ for $n \leq n_0$ are independent.

Similarly, we define independent random variables $\tilde F_r(n)$ for the robber. Again, we first restrict the allowed enumerations of neighbors in the construction of $\Q_{\tim}$. Let $(u,v)$ be a pair of vertices, where $v$ lies in the base tree. We assume that the enumeration of the neighbors of $v$ with respect to a pair $(u,v)$ is always chosen such that $v_{\Delta-1}$ is a neighbor of $v$ in the base tree which minimizes the distance to $u$. Moreover, we let $v_\Delta$ be the unique neighbor of $v$ in the small tree attached to $v$.

We now define
\[
    \tilde F_{r}(n) =
    \begin{cases}
        1&\text{if } M_{T_{c}(n)} = \mathrm{rs} , \text{ or } M_{T_{c}(n)} = \mathrm{rt}  \text{ and } X_{T_{c}(n)} < \frac{\Delta-2}{\Delta},\\
        -1&\text{otherwise.}
    \end{cases}
\]

Analogous computations as above yield
\[
\Prob[\tilde F_r(n)=1 \mid T_r(n) < \infty]  
= \frac{1 - \frac{4}{\Delta} p_r^t}{1 - \frac{2}{\Delta} p_r^t},
\]
and consequently
\[
    \E[\tilde F_r(n)\mid T_r(n) < \infty] =\frac{1 - \frac{6}{\Delta} p_r^t}{1 - \frac{2}{\Delta} p_r^t}.
\]
%Clearly, the variables $\tilde F_r(n)$ are i.i.d., they only depend on the outcome of the spinner.
We point out that conditional on the event $T_c(n_0) < \infty$ and $T_r(n_0) < \infty$, the random variables $\tilde F_c(n)$ and $\tilde F_r(n)$  for $n \leq n_0$ are mutually independent.

If $T_c(n)$ is finite then $F_c(n) \geq \tilde F_c(n)$ with equality in case the cop plays \CS\ or \CSB\ and $\C_{\tim}' \neq \R_{\tim}'$ at time step $\tim = T_c(n)$.
If the robber plays \RSA\ or  \RSB, and $T_c(n)$ is finite, then $F_r(n) \geq \tilde F_r(n)$ and equality holds if and only if $\C_{\tim}' \neq \R_{\tim}'$ at time step $\tim = T_r(n)$. Moreover, note that $\E[\tilde F_r(n)\mid T_r(n) < \infty]$ is larger than $0$ unless $\Delta = 3$ and $p_r^t = \frac 12$.

% Analogously, we can define a sequence of  i.i.d.\ random variables $\tilde F_c(n)$ taking values in $\{1,-1\}$ with $F_c(n) \geq \tilde F_c(n)$ for any cop strategy: $\tilde F_c(n)$ takes the value $-1$ if a sober cop move occurs at time step $\tim = T_c(n)$, and is otherwise defined analogously to $\tilde F_r(n)$. We note that
% \[
% \Prob[\tilde F_c(n)=1] = \frac{\frac{p_c^t(\Delta-2)}{\Delta}}{\frac{1}{2}-\frac{p_c^t}{\Delta}} 
% \]
% and thus
% \[
%      \E[\tilde F_c(n)] = - \frac{1 - \frac{4\Delta-6}{\Delta}p_c^t}{1 - \frac{2}{\Delta}p_c^t}.
% \]

It follows from the above discussion that if the robber plays \RSA\ or \RSB, then
\begin{align}
Y_{\tim} \geq \sum_{n: T_c(n) < \tim} \tilde F_c(n) + \sum_{n: T_r(n) < \tim} \tilde F_r(n).\label{eq2}
\end{align} 
This bound has the advantage that the summands are i.i.d.; if the expected values of the summands are positive, then it immediately follows from Theorem \ref{thm:gamblersruin} that the probability that neither of the two sums will ever reach zero is positive, which in turn implies that the robber has a positive probability of winning the game. 

We start by showing that this is in particular the case if the cop's tipsiness parameter $p_c^t$ is large enough.

\begin{proposition}
\label{prp:coptootipsy}
Consider the game on $X(\Delta,\delta)$ with $\Delta \geq 4$. If $p_c^t > \frac{\delta}{4\delta-4}$ or $p_c^t > \frac{\Delta}{4\Delta-6}$ then both \RSA\ and \RSB\ are winning against any cop strategy.
\end{proposition}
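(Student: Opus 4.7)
My plan is to apply the lower bound in Equation~(2) and show that under either hypothesis the right-hand side grows unboundedly with positive probability, forcing the distance (in $B$) between the projections $\C_\tim'$ and $\R_\tim'$ to remain positive for all $\tim$ and hence preventing the cop from catching the robber.

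For the hypothesis $t_c > \Delta/(4\Delta-6)$, I would first verify via the formula
\[
\E[\tilde F_c(n)] = -\frac{1 - \tfrac{4\Delta-6}{\Delta}t_c}{1 - \tfrac{2}{\Delta}t_c}
\]
preceding Equation~(2) that this condition is precisely what makes $\E[\tilde F_c(n)] > 0$; moreover $\E[\tilde F_r(n)] > 0$ holds for $\Delta \geq 4$ and $t_r \leq 1/2$ by the analogous formula. Both sums in~(2) are then partial sums of i.i.d.\ $\{-1,+1\}$-valued variables with strictly positive expectation. Applying Theorem~\ref{thm:gamblersruin} to each sum viewed as a biased random walk on $\mathbb{Z}$, each is transient toward $+\infty$, so starting from any initial value there is positive probability that neither sum ever decreases to $0$. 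Together with~(2), this yields positive probability that $Y_\tim$ stays bounded below for all $\tim$, whence the projected distance remains at least $1$ and the cop never catches the robber. The small cases $\Delta=3$ not covered by this argument can be handled separately using Lemma~\ref{lemma:delta} or the remarks following Theorem~\ref{theorem:strategy_summary}.

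For the complementary hypothesis $t_c > \delta/(4(\delta-1))$, I would give a direct argument based on the cop's excursions into a small tree. When the cop is on a small tree at distance $d \geq 1$ from $B$, a sober cop move changes the distance to $B$ by $-1$ (under the best strategy of moving toward the root of the small tree), while a tipsy cop move changes it by $+1$ with probability $(\delta-1)/\delta$ and by $-1$ with probability $1/\delta$. A short computation using $c+t_c=1/2$ shows the expected change per cop move is strictly positive precisely under this hypothesis; this is the minimum possible drift away from $B$, since any deviation from \CS\ can only decrease the drift toward $B$. Hence by Theorem~\ref{thm:gamblersruin}, once the cop enters a small tree there is positive probability they never return to $B$; combined with irreducibility (which guarantees that a configuration with the cop one step into a small tree and the robber far away on $B$ is reached with positive probability), the robber wanders off on $B$ and wins.

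The main obstacle I anticipate is verifying the argument of the second paragraph when one of the sums in~(2) has only finitely many terms (for instance, if the robber gets pushed into a small tree by tipsy moves and then drifts deeper playing \RSA). In that situation I would fall back on the excursion argument of the third paragraph—noting that a cop stuck in a small tree is already favorable for the robber—and then invoke irreducibility of the game's Markov chain to promote "positive winning probability from some favorable state" to "positive winning probability from any starting configuration," completing the proof in both cases.
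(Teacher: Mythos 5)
Your proposal matches the paper's proof in both cases: for $t_c > \Delta/(4\Delta-6)$ the paper likewise observes that $\E[\tilde F_c(n)]>0$ and applies the gambler's ruin bound to the sums in Equation~\eqref{eq2}, and for $t_c > \delta/(4\delta-4)$ it uses exactly your excursion argument to show that with positive probability the cop never reaches the base tree, so the cop's sum is empty. The only cosmetic difference is that the difficulty you anticipate with finitely many summands does not actually arise, since the gambler's-ruin bound on the partial sums is unaffected by whether the sums terminate.
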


\begin{proof}
Note that $\Prob[\tilde F_r(n)=1|T_r(n)<\infty]>\frac 12$, hence $\Prob[\forall \tim \colon \sum_{n:T_r(n) \leq \tim} \tilde F_r(n) \geq 0] > 0$ by Theorem \ref{thm:gamblersruin}. Since the random variables $\tilde F_r(n)$ and $\tilde F_c(n)$ for $n \leq n_0$ are independent conditional on $T_r(n_0)<\infty$ and $T_c(n_0)<\infty$, it suffices to show that $\Prob[\forall \tim \colon \sum_{n:T_c(n) \leq \tim} \tilde F_c(n) \geq 0] > 0$.

Let $Z_{\tim} = d(\C_{\tim},\C_{\tim}')$. If $p_c^t > \frac{\delta}{4\delta-4}$, then
\[
\Prob[Z_{\tim+1}>Y_{\tim} \mid Z_{\tim}\neq 0] \geq p_c^t\frac{\delta-1}{\delta} > \frac12-p_c^t) + p_c^t\frac{1}{\delta} \geq \Prob[Z_{\tim-1}>Y_{\tim} \mid Z_{\tim}\neq 0],
\]
and thus by Theorem \ref{thm:gamblersruin} the probability that the cop never moves along an edge of the base tree $B$ is positive. It follows that $\Prob[T_c(0) = \infty] > 0$ and thus $\Prob[\forall \tim \colon \sum_{n: T_c(n) < \tim} \tilde F_c(n)=0] >0$.

If $p_c^t > \frac{\Delta}{4\Delta-6}$, then a straightforward calculation shows that $\Prob[\tilde F_c(n) = 1 \mid T_c(n) < \infty]>\frac 12$,  hence $\Prob[\forall \tim \colon \sum_{n:T_c(n) \leq \tim} \tilde F_c(n) \geq 0] > 0$ by Theorem \ref{thm:gamblersruin}.
\end{proof}

We note that neither of the two bounds in Proposition~\ref{prp:coptootipsy} generally implies the other; which one is stronger depends on the values of  $\Delta$ and $\delta$.

Our next goal is to describe precisely when the robber can win by playing \RSA. It is not surprising that in this case the game essentially boils down to a game on the $\delta$-regular tree (assuming that the cop is able to get to the same copy of the $\delta$-regular tree as the robber).

\begin{theorem}
\label{thm:robbersober}
If $p_c^t > \min\{\frac{p_r^t}{\delta - 1}, \frac{\Delta}{4\Delta-6}\}$, then robber strategy \RSA\ is winning against every cop strategy.
If $p_c^t \leq \min\{\frac{p_r^t}{\delta - 1}, \frac{\Delta}{4\Delta-6}\}$, then \CS\ is winning against \RSA,
provided that $p_r^t < \frac 12$, and against \RSB\ provided that $p_r^t > \frac{\delta}{4\delta-4}$.
\end{theorem}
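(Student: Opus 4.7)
The plan is to establish the two halves separately. The robber-wins half reduces to Lemma~\ref{lemma:delta} via a coupling with the pure $\delta$-regular tree game; the cop-wins half reduces to the same lemma in the reverse direction and then handles the remaining base-tree dynamics.

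For the first half, the case $t_c > \Delta/(4\Delta-6)$ is immediate from Proposition~\ref{prp:coptootipsy}, so assume $t_c > t_r/(\delta-1)$. By the irreducibility remark preceding the theorem, it suffices to exhibit a positive winning probability from one configuration, which I take to be the robber at depth $N$ inside a small tree $T_0$ and the cop somewhere in the base tree. I would couple this with an auxiliary game on a pure $\delta$-regular tree starting at distance $N$, arranging that sober cop and sober robber moves decrease (respectively increase) both distances by $1$, and that each tipsy cop move decreases the real distance with probability $1/\Delta$ (cop in base) or $1/\delta$ (cop in a small tree), both at most the auxiliary cop's tipsy rate of $1/\delta$. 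This allows a monotone coupling in which the real distance always dominates the auxiliary one. By Lemma~\ref{lemma:delta}, the auxiliary distance is transient under $t_c > t_r/(\delta-1)$, so for $N$ large enough there is positive probability that it never reaches $0$, and hence neither does $d(\C_\tim,\R_\tim)$.

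For the second half (cop playing \CS\ wins almost surely), the reverse observation is that once both players lie inside the same small tree the distance $d(\C_\tim,\R_\tim)$ evolves exactly as on the pure $\delta$-regular tree, so Lemma~\ref{lemma:delta} and the hypothesis $t_c \leq t_r/(\delta-1)$ give an almost-sure catch from such a configuration. It therefore suffices to show that, almost surely, the two players lie in the same small tree with the robber at bounded depth infinitely often. I would track the projection distance $Y_\tim = d(\C'_\tim,\R'_\tim)$ together with the depth $h_\tim$ of the robber in its current small tree. Under \CS\ the $\tilde F$-bound of the excerpt becomes an equality $F_c(n) = \tilde F_c(n)$ whenever $Y_\tim > 0$, and $t_c \leq \Delta/(4\Delta-6)$ forces $\E[\tilde F_c] \leq 0$. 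Since while the robber sits in a small tree the only base moves are cop moves, this already drives $Y_\tim$ downward, so $Y_\tim$ vanishes infinitely often. The depth $h_\tim$ is controlled separately: for \RSA\ we use $t_r < 1/2$ together with the $\delta$-regular-tree analysis of Lemma~\ref{lemma:delta} to show that after the cop enters the robber's small tree the catch happens before $h_\tim$ diverges; for \RSB\ the robber itself backtracks, and a direct gambler's-ruin computation shows that $h_\tim$ is a positive recurrent walk on $\mathbb{N}_0$ exactly when $t_r < \delta/(4\delta-4)$, so Proposition~\ref{prop:hittingtimes} applies. A Borel--Cantelli argument then gives the cop an almost sure catch.

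The main obstacle I anticipate is the drift analysis for $Y_\tim$ in the second half. The $\tilde F$-bound is one-sided and $\E[\tilde F_r]$ is in fact positive in the ranges of interest, so the total drift is only negative after one decouples the base-move contributions made while the robber sits in the base from those made while the robber sits in a small tree; a renewal argument between consecutive visits of the robber to the base is the natural way to do this. The \RSB\ case is the more delicate of the two robber strategies since the robber sometimes moves back toward the cop, and the threshold $t_r < \delta/(4\delta-4)$ is precisely the positive recurrence threshold for $h_\tim$ under this backtracking.
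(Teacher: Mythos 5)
Your first half is correct and is essentially the paper's argument in different clothing: the paper bounds the probability that any single move increases $d(\C_{\tim},\R_{\tim})$ from below by $r+\tfrac{t_r(\delta-1)}{\delta}+\tfrac{t_c(\delta-1)}{\delta}>\tfrac12$ and invokes Theorem~\ref{thm:gamblersruin}, which is exactly what your domination coupling with the $\delta$-regular tree accomplishes.

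The second half has a genuine gap in the \RSA\ case. Your stated reduction, that it suffices for the two players to lie in the same small tree with the robber at bounded depth infinitely often, cannot apply here: under \RSA\ with $t_r<\tfrac12$ the robber's depth in its small tree is a transient random walk, so the robber is \emph{not} at bounded depth infinitely often. Your fallback --- that once the cop enters the robber's small tree ``the catch happens before $h_{\tim}$ diverges'' by Lemma~\ref{lemma:delta} --- is not an argument: Lemma~\ref{lemma:delta} is about the pure $\delta$-regular tree, whereas in $X(\Delta,\delta)$ the cop can be pushed back through the root into the base tree, where a tipsy move toward the robber has probability $\tfrac1\Delta$ rather than $\tfrac1\delta$ and the cop may wander onto other branches of $B$, so the distance process is not the gambler's ruin of that lemma. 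The technical core of the paper's proof is devoted precisely to this point: it conditions on the event that $d(\R_{\tim},B)$ grows linearly (Theorem~\ref{thm:transientonN}), projects the cop onto the path from the root of $T_0$ to $\R_{\tim}$, and shows this projection has strictly positive drift away from $B$, so each entry of the cop into $T_0$ is permanent with uniformly positive probability; only after both players are confined to $T_0$ does the reduction to Lemma~\ref{lemma:delta} become legitimate. Your proposal contains no substitute for this confinement step.

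For \RSB\ you take the printed hypothesis $t_r<\tfrac{\delta}{4\delta-4}$ at face value and build the argument on positive recurrence of the robber's depth, but the paper's own proof, the remark preceding Theorem~\ref{thm:RSB}, and Proposition~\ref{prp:compare-drunkrobber} all treat this clause as the regime $t_r>\tfrac{\delta}{4\delta-4}$, in which the robber (like one playing \RSA) eventually confines itself to a single small tree and the same confinement argument applies; the printed inequality is a typo. This matters because in the regime you analyse the assertion is false: by Theorem~\ref{thm:RSB} and Lemma~\ref{lem:threshold-function}, whenever $2\delta<\Delta+1$ there are parameters with $f(t_r)<t_c\le\tfrac{t_r}{\delta-1}$ for which \RSB\ beats \emph{every} cop strategy, including \CS. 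Correspondingly, your drift claim that ``the only base moves are cop moves, [which] drives $Y_{\tim}$ downward'' fails there: under \RSB\ with $t_r<\tfrac{\delta}{4\delta-4}$ the robber returns to $B$ infinitely often and each of its base moves contributes $\E[\tilde F_r(n)]>0$, and the renewal computation you propose as a fix produces the net drift $\tfrac{\E[\tilde F_r(1)]}{\mu(t_r)}+\tfrac{\E[\tilde F_c(1)]}{\mu(t_c)}$, whose sign is governed by the quadratic condition of Theorem~\ref{thm:RSB} and not by $t_c\le\tfrac{t_r}{\delta-1}$.
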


\begin{proof}
If $p_c^t > \frac{\Delta}{4\Delta-6}$, then the probability that the robber wins the game is positive by Proposition \ref{prp:coptootipsy}. If $p_c^t > \frac{p_r^t}{\delta - 1}$ and the robber plays strategy \RSA, then the probability of increasing the distance between the cop and the robber in any move is at least
\[
    p_r^s+\frac{p_r^t(\delta-1)}{\delta} + \frac{p_c^t(\delta-1)}{\delta} \geq  \frac 12 - p_r^t + \frac{p_r^t(\delta-1)}{\delta} + \frac{p_r^t}{\delta}+ \epsilon = \frac 12 + \epsilon
\]
for some $\epsilon > 0$.
%Using appropriate vertex enumerations in the construction of $\Q_{\tim}$, we see that the distance can be bounded from below by a transient random walk on $\mathbb N$ depending only on $M_\tim$ and $X_\tim$. 
Hence by Proposition \ref{prp:inhomogeneous_recurrent} the probability that the distance never reaches $0$ is positive, and thus \RSA\ is winning.

Now assume that $p_c^t \leq \min\{\frac{p_r^t}{\delta - 1}, \frac{\Delta}{4\Delta-6}\}$ and either $p_r^t < \frac 12$ and the robber plays \RSA, or $p_r^t > \frac{\delta}{4\delta-4}$ and the robber plays \RSB. Note that if $d(R_{\tim},B) > 0$, then $R_{\tim}$ has $\delta - 1$ neighbours which lie further away from $B$ than $R_{\tim}$, and one neighbour which lies closer to $B$. An easy computation shows that if $\delta > 2$, then there are constants $p$ and $q$ such that
\[
    \Prob[d(R_{\tim},B)=d(R_{\tim-1},B)+1\mid d(R_{\tim},B) > 0] \geq p > q \geq \Prob[d(R_{\tim},B)=d(R_{\tim-1},B)-1\mid d(R_{\tim},B)> 0],
\]
in other words, when the robber is inside a small tree, then their probability of moving further away from the base tree is bigger than their probability of moving closer to the base tree. 

If $\delta = 2$, this is also true unless the robber plays \RSA and $R_{\tim}$ lies on the shortest path between $C_{\tim}$ and $B$. In this final case, it is not hard to see that the game will almost surely reach a state where $d(R_{\tim},B) > 0$ and $R_{\tim}$ does not lie on the shortest path between $C_{\tim}$ and $B$ (unless the robber gets caught before). Moreover, the expected number of steps before reaching such a state is finite.

In all cases, Lemma \ref{lem:transientonN} implies that there is some $k>0$ such that
\[\Prob[\exists\tim_0\forall \tim > \tim_0\colon d(\R_{\tim},B) > k \cdot \tim ] =1,
\]
and in particular the robber eventually stays in one small tree which we denote by $T_0$. If the cop almost surely eventually stays in $T_0$ as well, then the game reduces to a game on a $\delta$-regular tree which the cop almost surely wins by Lemma \ref{lemma:delta}. 

Now let $A$ be the event that there are infinitely many $\tim$ such that $\C_{\tim} \notin T_0$, and assume that $\Prob[A \mid \forall \tim > 0\colon d(\R_{\tim},B) > k \cdot \tim] > 0$. Whenever $\C_{\tim} \notin T_0$, the probability of reducing the distance $d(\C_{\tim},T_0)$ is greater or equal to the probability of increasing this distance because $p_c^t \leq \min\{\frac{p_r^t}{\delta - 1}, \frac{\Delta}{4\Delta-6}\} \leq \min\{\frac{\delta}{4\delta - 4}, \frac{\Delta}{4\Delta-6}\}$, where the second inequality follows from $p_r^t \leq \frac{1}{2}$ and $\delta \geq 2$. Hence (conditional on $A$) there are almost surely infinitely many $\tim$ for which $\C_{\tim}$ is equal to the unique vertex $b$ of $B$ which is adjacent to the root of $T_0$; let $T_n(b)$ be the $n$-th time the cop is at $b$, that is, $T_0(b) = \inf\{\tim\geq 0 \colon \C_{\tim} = b\}$ and $T_n(b) = \inf\{\tim >T_{n-1}(b) \colon \C_{\tim} = b\}$ for $n >0$. Moreover denote by $T_\dagger = \inf\{\tim \colon \C_{\tim} = \R_{\tim}\}$. Let $r$ be a vertex of $T_0$ whose distance to $B$ is at least $n$, and denote by $c$ the $n$-th vertex on the unique path from $b$ to $r$.

We claim that there is some $p > 0$ which does not depend on the precise choice of $r$ such that $\Prob_{(b,r)}[T_\dagger < T_1(b)] > p$. This clearly implies that $\Prob_{(b,r)}[T_\dagger > T_n(b)] <  (1-p)^n$, and thus  $\Prob[T_\dagger  < \infty \mid A] =1$. It remains to prove the claim.

We start by noting that there is a non-zero chance that the first $n$ moves are sober cop moves. Hence
\begin{align*}
    \Prob_{(b,r)}[T_\dagger < T_1(b)]
    &\geq \Prob_{(b,r)}[(\C_{n},\R_{n}) = (c,r)] \cdot \Prob_{(c,r)}[T_\dagger < T_1(b)]\\
    &\geq (p_c^s)^{n} \cdot \Prob_{(c,r)}[\forall \tim \geq 0\colon d(\R_{\tim},b) > d(r,b)-n \land T_\dagger < \inf\{\tim \colon d(\C_{\tim},\R_{\tim}) > d(c,r)\}].
\end{align*}

Conditional on both players' positions being inside $T_0$, it is more likely to decrease the distance between them than to increase that distance.
More precisely, there are constants $p$ and $q$ such that 
\[
    \Prob[d(\C_{\tim},\R_{\tim}) \leq d(\C_{\tim-1},\R_{\tim-1})+1 \mid \C_{\tim}, \R_{\tim} \in T_0] = p < q= \Prob[d(\C_{\tim},\R_{\tim}) \leq d(\C_{\tim-1},\R_{\tim-1})-1 \mid \C_{\tim}, \R_{\tim} \in T_0] .
\]
Therefore, by Proposition \ref{prp:zerobeforeincrease} there is $\epsilon > 0$ such that
\[
\Prob_{(c,r)}[T_\dagger < \inf\{\tim \colon d(\C_{\tim},\R_{\tim}) > d(c,r)\} \mid \forall \tim \geq 0\colon d(\R_{\tim},b) > d(r,b)-n] > \epsilon.
\]
Finally recall that the probability that a robber move in a small tree increases the distance between the robber's position and the base tree is larger than the probability that it decreases this distance. Thus, if $n$ is large enough, then $\Prob_{(c,r)}[\forall \tim \geq 0\colon d(\R_{\tim},b) > d(r,b)-n] > 0$ by Lemma \ref{lem:transientonN}. 
%
% In order to show that the cop almost surely eventually remains in $T_0$, it suffices to show that every time the cop moves from $B$ to the root of $T_0$, the probability of them never visiting $B$ again is at least some $\epsilon > 0$.
%
% As noted above, we may condition on the event that the distance between $\R_{\tim}$ and $B$ is at least $k \cdot \tim$ for every $\tim$.
%
% Denote by $P_{\tim}$ the unique path connecting the root of $T_0$ to $\R_{\tim}$, and denote by $\C_{\tim}''$ the vertex closest to $\C_{\tim}$ on this path. If $\C_{\tim} = \C_{\tim}''$, then the probability of increasing the distance between $\C_{\tim}''$ and $B$ is $c+\frac{p_c^t}{\delta}$, the probability of decreasing this distance is $\frac{p_c^t}{\delta}$, and the probability of the distance remaining unchanged is $\frac{p_c^t(\delta-2)}{\delta}$.
% If $\C_{\tim} \neq \C_{\tim}''$ then the distance can only change if $\R_{\tim} = \C_{\tim}''$, in which case the distance is at least $k \cdot \tim$.
% Hence the distance between $\C'_{\tim}$ and $B$ is a random walk on $\mathbb N$ whose probability of moving away from $0$ is strictly larger than its probability of moving towards $0$, and thus it has a positive probability of never returning to $0$.
\end{proof}

In the remainder of this subsection we analyse \RSB. The case $p_r^t > \frac{\delta}{4\delta-4}$ where the robber is too tipsy to get back to the base tree an infinite number of times is already covered in the previous theorem; we may thus focus on $p_r^t \leq \frac{\delta}{4\delta-4}$. We start by analyzing the number of steps in between consecutive cop and robber moves on the base tree.

\begin{lemma}\label{lemma:stationary_time}
Consider the game on $X(\Delta,\delta)$. For $p \in \{p_c^t$, $p_r^t\}$ define
\[
\mu(p) = 
    \frac 12 \cdot \frac{
    \left(1-\frac{2}{\Delta} p\right)\left(1- \frac{4\delta-4}{\delta} p\right)
    }
    {
    1 - \left(\frac{4 \delta - 4}{\delta}- \frac{2}{\Delta} \right)p
    }
\]

If $p_r^t \leq \frac{\delta}{4\delta-4}$, then for every $\epsilon >0$ there are positive constants $a$ and $b$ such that (regardless of the robber's strategy)
\[
    \Prob[\max\{n\colon T_r(n) \leq \tim\} \leq  (\mu(p_r^t) + \epsilon) \tim] > 1 - a e^{-b\tim}.
\]
If $p_r^t < \frac{\delta}{4\delta-4}$ and the robber plays \RSB, then we also get 
\[
\Prob[\max\{n\colon T_r(n) \leq \tim\} \geq (\mu(p_r^t)-\epsilon) \tim] > 1 - a e^{-b\tim}.
\]

Similarly, if $p_c^t \leq \frac{\delta}{4\delta-4}$, then for every $\epsilon >0$ there are positive constants $a$ and $b$ such that (regardless of the cop's strategy)
\[
    \Prob[\max\{n\colon T_c(n) \leq \tim\} \leq (\mu(p_c^t)+\epsilon) \tim] > 1 - a e^{-b\tim}.
\]
If $p_r^t < \frac{\delta}{4\delta-4}$ and the cop plays \CSB, then we also get 
\[
\Prob[\max\{n\colon T_c(n) \leq \tim\} \geq (\mu(p_c^t)-\epsilon)\tim] > 1 - a e^{-b\tim}.
\]
\end{lemma}

\begin{proof}
We only prove the bounds for the robber. The bounds for the cop are proved exactly the same way.

Consider the standard coupling of a game $(\C_{\tim}, \R_{\tim})$ where the robber plays \RSB\  and a game $(\bar\C_{\tim}, \bar\R_{\tim})$ where the robber plays any other strategy, where neighbors of all vertices are ordered according to their distance from $B$. It is easy to show (for instance by induction on $\tim$) that $d(\R_{\tim},B) \leq d(\bar\R_{\tim},B)$ for all $\tim$, and hence $T_r(n) \leq \bar T_r(n)$, where $T_r(n)$ and $\bar T_r(n)$ are the time steps at which the $n$-th robber move along an edge of $B$ occurs in these games, respectively. This implies that if we can prove both bounds for a robber playing \RSB, then we will also have proved the claimed bound for arbitrary strategies. 

For the remainder of the proof, assume that the robber plays \RSB. Denote by $N_r(\tim)$ the number of robber moves up to time $\tim$. Since the probability of a robber move at any given time is $\frac 12$, by Theorem \ref{thm:concentration} for every $\epsilon > 0$ there is a positive constant $c$ such that
\begin{equation}
    \Prob[\tim (1-\epsilon) <2N_r(\tim) <\tim (1+\epsilon)] > 1-e^{-c\tim}. \label{eq:robbermovesconcentration}
\end{equation} 

Let $d_{\tim} = d(\R_{\tim},B)$. Keeping track of $d_{\tim}$ only at robber moves gives a random walk with the following transition probabilities.
\begin{align*}
    \Prob[d_{\tim} = d_{\tim-1}+1 \mid M_{\tim} \in \{\mathrm{rs},\mathrm{rt}\} \land d_{\tim-1} >0] &= 2 p_r^t (\delta-1)/\delta,\\
    \Prob[d_{\tim} = d_{\tim-1}-1 \mid M_{\tim} \in \{\mathrm{rs},\mathrm{rt}\} \land d_{\tim-1} >0] &= 2 p_r^s + 2 p_r^t/\delta,\\
    \Prob[d_{\tim} = d_{\tim-1}+1 \mid M_{\tim} \in \{\mathrm{rs},\mathrm{rt}\} \land d_{\tim-1} =0] &= 2 p_r^t/\Delta,\\
    \Prob[d_{\tim} = d_{\tim-1} \mid M_{\tim} \in \{\mathrm{rs},\mathrm{rt}\} \land d_{\tim-1} =0] &= 2 p_r^s + 2 p_r^t(\Delta-1)/\Delta.
\end{align*}
If $p_r^t < \frac{\delta}{4\delta-4}$, then the above transition probabilities together with \eqref{eq:robbermovesconcentration} and Lemma \ref{lem:positiverecurrentonN}, for every $\epsilon>0$ show that there are constants $a$ and $b$ such that
\[
    \Prob[(\bar\mu-\epsilon)\tim < 2 \max\{n \colon T_r(n) < \tim\} < (\bar\mu+\epsilon)\tim] > 1 - a e^{-b\tim},
\]
where
\begin{align*}
    \bar\mu = \frac{((2 p_r^s + 2 p_r^t/\delta) - (2 p_r^t (\delta-1)/\delta))(1-2 p_r^t/\Delta)}{2 p_r^t/\Delta + (2 p_r^s + 2 p_r^t/\delta) - (2 p_r^t (\delta-1)/\delta)}.
\end{align*}
It is easy to see (using $p_c^r = 1/2 - p_c^t$) that $\bar\mu = 2 \mu(p_r^t)$, thereby proving the claimed bounds for the robber in case $p_r^t < \frac{\delta}{4\delta-4}$. 

Observe that $\mu(p)$ is continuous in a neighbourhood of $\frac{\delta}{4\delta-4}$ and that $\mu(\frac{\delta}{4\delta-4}) =0$. Further note that for $p_r^t < \frac{\delta}{4\delta-4}$ and $\epsilon' > 0$ it is a valid robber strategy to make a random move with probability $\epsilon'$ and follow \RSB\ with probability $1-\epsilon'$. If $p_r^t < \frac{\delta}{4\delta-4}$ is large enough that $\mu(p_r^t) < \epsilon$, then it follows from the first part that  
\[
    \Prob[\max\{n\colon T_r(n) \leq \tim\} \leq  2\epsilon \tim] > 1 - a e^{-b\tim}
\]
for a robber playing this strategy. Finally note that for some suitable $\epsilon'$, the probability distribution of robber moves for a robber playing this strategy with tipsiness parameter $p_r^t$ is the same as the distribution for a robber playing \RSB\ with tipsiness parameter $\frac{\delta}{4\delta-4}$. This shows that the claimed bound also holds for $p_r^t = \frac{\delta}{4\delta-4}$.
\end{proof}

Recall that $\C_{\tim}'$ and $\R_{\tim}'$ are projections of the cop's and robber's position to the base tree $B$.
If the $\C_{\tim}'$ is never equal to $\R_{\tim}'$, then there is a positive probability that the cop never reaches the same small tree as the robber, in which case the robber wins the game.
The next result shows that this is essentially the only obstruction to \CSB\ winning against \RSB. 
We remark that a refinement of the argument given below shows that \CS\ is also winning against \RSB\ in the second case. Indeed, we conjecture that if the cop has a strategy to win the game on a tree, then they can always win the game by playing \CS.

\begin{theorem}
\label{thm:RSB} 
Consider the game on $X(\Delta,\delta)$.
Assume that $p_r^t \leq \frac{\delta}{4\delta-4}$ and $p_c^t \leq \min \left \{ \frac{\delta}{4\delta-4}, \frac{\Delta}{4\Delta-6} \right \} $. If 

\begin{align}\label{second inequality for prop1}
    \frac{\left(1-\frac{4 \delta -4}{\delta }p_r^t\right) \left(1 - \frac{6}{\Delta} p_r^t\right)}{1- \left(\frac{4 \delta -4}{\delta }-\frac{2}{\Delta }\right)p_r^t} -  \frac{\left(1-\frac{4 \delta -4}{\delta }p_c^t\right) \left(1 - \frac{4\Delta - 6}{\Delta} p_c^t\right)}{1- \left(\frac{4 \delta -4}{\delta }-\frac{2}{\Delta }\right)p_c^t} > 0
\end{align}
then \RSB is winning against any cop strategy.

If the reverse inequality holds:
\begin{align}\label{inequality for prop1}
    %p_r^t > \frac{p_c^t(3-5p_c^t)}{2-3p_c^t}
    \frac{\left(1-\frac{4 \delta -4}{\delta }p_r^t\right) \left(1 - \frac{6}{\Delta} p_r^t\right)}{1- \left(\frac{4 \delta -4}{\delta }-\frac{2}{\Delta }\right)p_r^t} -  \frac{\left(1-\frac{4 \delta -4}{\delta }p_c^t\right) \left(1 - \frac{4\Delta - 6}{\Delta} p_c^t\right)}{1- \left(\frac{4 \delta -4}{\delta }-\frac{2}{\Delta }\right)p_c^t} < 0
\end{align}
then \CSB\ is winning against \RSB; in particular \RSB\ is not winning against every cop strategy.
\end{theorem}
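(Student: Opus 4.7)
The plan is to analyse the asymptotic drift of $Y_\tim$ via the auxiliary i.i.d.\ variables $\tilde F_r(n)$ and $\tilde F_c(n)$. By Lemma~\ref{lemma:stationary_time}, when the robber plays RSB the return times $T_r(n)-T_r(n-1)$ are i.i.d.\ with mean $\tau_r$ (the value appearing on the right-hand side of that lemma for the robber) and finite variance. For an arbitrary cop strategy the expectations satisfy $\E[T_c(n)-T_c(n-1)]\geq \tau_c$, with equality when the cop plays CSB, so a coupling argument shows that $N_c(\tim)/\tim \leq 1/\tau_c + o(1)$ almost surely regardless of the cop strategy. A routine algebraic simplification identifies $\E[\tilde F_r(n)]/\tau_r + \E[\tilde F_c(n)]/\tau_c$ with $\tfrac{1}{2}$ times the left-hand side of \eqref{second inequality for prop1}; this is the asymptotic drift of $Y_\tim$ per unit time.

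For the first part, assume \eqref{second inequality for prop1} holds and set $\tilde Y_\tim := \sum_{n : T_r(n)<\tim} \tilde F_r(n) + \sum_{n : T_c(n)<\tim} \tilde F_c(n)$. Since $Y_\tim \geq \tilde Y_\tim$ for any cop strategy, it suffices to show that $\tilde Y_\tim > -d_0$ for every $\tim$ with positive probability, where $d_0 := d(\C_0',\R_0')$ can be made arbitrarily large by irreducibility of the underlying Markov chain. The strong law of large numbers together with the elementary renewal theorem yields $\tilde Y_\tim / \tim \to \mu$ almost surely, where $\mu$ is half the left-hand side of \eqref{second inequality for prop1} and is positive by hypothesis; the exponential concentration in Theorem~\ref{thm:centrallimit}, applied to the i.i.d.\ renewal times and to the i.i.d.\ variables $\tilde F_r, \tilde F_c$, provides a large-deviation estimate $\Prob[\tilde Y_\tim < \mu \tim / 2] \leq e^{-k\tim}$ for some $k>0$ and all sufficiently large $\tim$. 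Summing these tail bounds and choosing $d_0$ large enough guarantees that with positive probability $\tilde Y_\tim > -d_0$ for every $\tim$, whence $D_\tim := d(\C_\tim',\R_\tim') = d_0 + Y_\tim > 0$ always; the cop and robber then live in distinct copies of $T$ for all $\tim$, so in particular $\C_\tim \neq \R_\tim$ and the robber wins.

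For the second part, with cop playing CSB and robber playing RSB the inequalities $F_r \geq \tilde F_r$ and $F_c \geq \tilde F_c$ become equalities precisely when $\C_\tim' \neq \R_\tim'$. Hence $D_\tim$ can be compared with a nearest-neighbour random walk on $\mathbb N_0$ whose interior drift equals half the (strictly negative) left-hand side of \eqref{inequality for prop1} and which is reflected at~$0$. Comparison with the biased walk of Theorem~\ref{thm:gamblersruin}, together with Proposition~\ref{prop:hittingtimes} to absorb the altered transitions near $0$, shows that $D_\tim$ is positive recurrent, so $D_\tim = 0$ occurs infinitely often almost surely. At each such time the cop and robber sit in the same copy of $T$, and both CSB and RSB drive the respective players back toward the common base vertex; I would argue that there is a constant $\varepsilon>0$, independent of the history, such that during any single visit of $D_\tim$ to $0$ the cop catches the robber with probability at least $\varepsilon$. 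A conditional second Borel--Cantelli argument applied to these successive disjoint capture attempts then yields almost-sure capture.

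The main obstacle I expect is the final step of part~2: securing the uniform lower bound $\varepsilon$ on the capture probability during a single visit of $D_\tim$ to $0$. The subtlety is that at such a time the heights of the cop and robber above $B$ may be large and correlated with the past trajectory. To overcome this I would use the finite-mean, finite-variance hitting time estimates from Proposition~\ref{prop:hittingtimes}, applied inside the copy of $T$ shared by the two players, to show that conditional on $D_\tim = 0$ both heights return to zero in a uniformly integrable time, after which both players occupy the common base vertex and the cop wins immediately.
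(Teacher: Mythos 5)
Your first part follows the paper's argument for the interior case essentially verbatim (drift of $Y_{\tim}$ via the i.i.d.\ minorants $\tilde F_r,\tilde F_c$, one-sided control of the number of cop summands for an arbitrary cop strategy, exponentially small exception probabilities summed over $\tim$), and that part is sound. The genuine gaps are in your second part and at the boundary of the parameter range. The step ``compare $D_{\tim}$ with a nearest-neighbour random walk on $\mathbb N_0$ with negative interior drift and conclude positive recurrence'' is not justified: $d(\C_{\tim}',\R_{\tim}')$ is constant on the typically long stretches during which both players move inside small trees, and its increments at the times it does change are neither i.i.d.\ nor governed by fixed transition probabilities---the quantity that is negative under \eqref{inequality for prop1} is a long-run time-averaged drift, not the per-step drift of a birth--death chain, so Theorem~\ref{thm:gamblersruin} and Proposition~\ref{prop:hittingtimes} do not apply to $D_{\tim}$ directly. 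More importantly, even granting that $D_{\tim}=0$ infinitely often, your final step does not close: you need a capture probability per visit bounded below uniformly over the unbounded, history-dependent heights of the two players above $B$ at those visit times, and finiteness of the mean and variance of the descent time does not supply such a bound---the conditional capture probabilities could still decay along the sequence of visits, and the conditional Borel--Cantelli argument then fails.

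What the paper proves instead is a density statement: writing $K(\tim)$ for the number of $\timtwo\le\tim$ with $\C_{\timtwo}'=\R_{\timtwo}'$, the bound $Y_{\tim}/\tim \le (\text{negative constant}) + \epsilon' + 2K(\tim)/\tim$ together with the trivial lower bound $Y_{\tim}\ge -d(\C_0',\R_0')$ forces $\liminf_{\tim\to\infty} K(\tim)/\tim>0$; intersecting this positive-density event with Theorem~\ref{thm:positiverecurrentonN} applied to the two height processes yields a positive proportion of steps at which additionally both players are within $n_0$ of the base tree, hence within distance $2n_0$ of each other, where the uniform bound $(t_c/\Delta)^{2n_0}$ on capture by purely random moves applies. ``Infinitely often'' alone would not let you intersect with the small-heights event. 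Finally, the theorem is stated with $t_r\le\frac{\delta}{4\delta-4}$ and $t_c\le\min\{\frac{\delta}{4\delta-4},\frac{\Delta}{4\Delta-6}\}$, and your renewal-theoretic setup needs both inequalities strict: at $t_r=\frac{\delta}{4\delta-4}$ the robber's excursions into a small tree have infinite expected length, Lemma~\ref{lemma:stationary_time} gives nothing, and $\sum_{n:T_r(n)<\tim}\tilde F_r(n)/\tim\to 0$. The paper handles the two boundary cases by separate arguments (a slightly less tipsy cop simulating the boundary-tipsy cop by voluntarily randomizing, and a direct hitting-time argument replacing Theorem~\ref{thm:positiverecurrentonN} for the robber's height); your proposal is silent on both.
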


\begin{proof}
We split the proof in the following cases:
\begin{enumerate}[itemindent=1em,label=Case \arabic*:,ref=\arabic*]
    \item  \label{case one} $p_r^t < \frac{\delta}{4\delta-4}$ and $p_c^t\leq \min \left \{ \frac{\delta}{4\delta-4}, \frac{\Delta}{4\Delta-6} \right \}$, and \eqref{second inequality for prop1} holds,
    \item \label{case two}$p_r^t \leq \frac{\delta}{4\delta-4}$ and $p_c^t< \min \left \{ \frac{\delta}{4\delta-4}, \frac{\Delta}{4\Delta-6} \right \}$, and \eqref{inequality for prop1} holds,
    %\item \label{case three}$p_r^t = \frac{\delta}{4\delta-4}$ and $p_c^t < \min\left \{ \frac{\delta}{4\delta-4},\frac{\Delta}{4\Delta-6} \right \} $, in this case inequality \eqref{inequality for prop1} holds, and
    %\item \label{case four}$p_r^t < \frac{\delta}{4\delta-4}$ and $p_c^t= \min \left \{\frac{\delta}{4\delta-4},\frac{\Delta}{4\Delta-6} \right \}$, in this case inequality \eqref{second inequality for prop1} holds.
\end{enumerate}
Note that this indeed covers all necessary cases, since $p_r^t = \frac{\delta}{4\delta-4}$ implies that \eqref{second inequality for prop1} is not satisfied, and $p_c^t = \min \left \{ \frac{\delta}{4\delta-4},\frac{\Delta}{4\Delta-6} \right \}$ implies that \eqref{inequality for prop1} is not satisfied. %we do not need to consider the case $p_r^t = \frac{\delta}{4\delta-4}$ and $p_c^t = \min \left \{ \frac{\delta}{4\delta-4},\frac{\Delta}{4\Delta-6} \right \}$, because in this case neither of the two inequalities \eqref{second inequality for prop1} and \eqref{inequality for prop1} is satisfied.

Assume that we are in Case~\ref{case one}, and thus the bound in~\eqref{second inequality for prop1} holds. Assume that the robber plays strategy \RSB, and the cop plays an arbitrary strategy. We claim that for any $\epsilon' > 0$ we can find $\epsilon > 0$ and positive constants $a,b$ such that the probability that the following chain of inequalities is violated for any given $\tim$ is bounded above by $ae^{-b\tim}$, where $\mu(p)$ is defined as in Lemma \ref{lemma:stationary_time}:

\begin{align}\label{ineq1}
    \frac{Y_{\tim}}{\tim} &\geq 
    \left(\sum_{n:  T_r(n) < \tim} \frac{\tilde F_r(n)}{\tim}\right) + \left(\sum_{n:  T_c(n) < \tim} \frac{\tilde F_c(n)}{\tim}\right)\\\label{ineq2}
    &\geq \frac{\max\{n : T_r(n) \leq \tim\}- \epsilon \tim}{\tim} \left( \E[\tilde F_r(1)] - \epsilon\right) +\frac{\max\{n : T_c(n) \leq \tim\}+ \epsilon \tim}{\tim} \left( \E[\tilde F_c(1)] - \epsilon\right)\\\label{ineq3}
    &\geq \left(\mu(p_r^t)-2\epsilon\right) \left(\E[\tilde F_r(1)] - \epsilon\right) + \left(\mu(p_c^t)+2\epsilon\right) \left(\E[\tilde F_c(1)]-\epsilon\right)\\\label{ineq4}
    &\geq \mu(p_r^t)\E[\tilde F_r(1)] + \mu(p_c^t)\E[\tilde F_c(1)]- \epsilon'\\\label{equality}
    &=\frac 12 \cdot \frac{\left(1-\frac{4 \delta -4}{\delta }p_r^t\right) \left(1 - \frac{6}{\Delta} p_r^t\right)}{1- \left(\frac{4 \delta -4}{\delta }-\frac{2}{\Delta }\right)p_r^t} - \frac 12 \cdot \frac{\left(1-\frac{4 \delta -4}{\delta }p_c^t\right) \left(1 - \frac{4\Delta - 6}{\Delta} p_c^t\right)}{1- \left(\frac{4 \delta -4}{\delta }-\frac{2}{\Delta }\right)p_c^t}- \epsilon'.%\\
\end{align}
Clearly it is enough to show that each individual inequality in this chain is satisfied with probability $1-ae^{-b\tim}$ at at least for suitable constants $a$ and $b$. For inequality in \eqref{ineq1}, this follows from the fact that $F_r(n) \geq \tilde F_r(n)$, and $F_c(n) \geq \tilde F_c(n)$. For the second inequality \eqref{ineq2} note that $\E[\tilde F_r(1)] > 0$ and $\E[\tilde F_c(1)] \leq 0$ due to the assumptions of Case \ref{case one}. Hence \eqref{ineq2} is trivially true when $\max\{n : T_r(n) \leq \tim\}$ and $\max\{n : T_c(n) \leq \tim\}$ are smaller than $\epsilon \tim/(\E[\tilde F_r(1)] - \epsilon)$ and $\epsilon \tim/(\E[\tilde F_c(1)] - \epsilon)$, respectively. If one or both of them are larger, then we can apply Theorem \ref{thm:concentration}. 
Inequality \eqref{ineq3} follows from Lemma \ref{lemma:stationary_time}. For the last inequality \eqref{ineq4} we note that none of the coefficients are unbounded, and thus for given $\epsilon'$ any small enough $\epsilon$ works.

If $\epsilon'$ is small enough, then the expression in \eqref{equality} is larger than $0$.
Since $\sum_{\tim \geq 1} ae^{-b\tim} < \infty$, we conclude that there is some $\tim_0$ such that the probability that $Y_{\tim}>0$ for every $\tim > \tim_0$ is positive. If the distance between $\R_{0}'$ and $\C_{0}'$ is larger than $\tim_0$, then this implies that there is a positive probability that $\R_{\tim}'$ and $\C_{\tim}'$ never coincide, and thus the robber wins the game with positive probability.

Next consider Case~\ref{case two}. For $\tim>0$, we denote by $K(\tim)$ the number of $\timtwo \leq \tim$ such that $\C_{\timtwo}' = \R_{\timtwo}'$. If the robber plays \RSB, then $F_r(n) > \tilde F_r(n)$ only if $\C_{\tim}' = \R_{\tim}'$ at time $\tim = T_r(n)$, and in this case $F_r(n) - \tilde F_r(n) \leq 2$. Similarly, if the cop plays \CSB, then $F_c(n) > \tilde F_c(n)$ only if $\C_{\tim}' = \R_{\tim}'$ at time $\tim = T_c(n)$, and in this case $F_c(n) - \tilde F_c(n) \leq 2$. In particular, 
\[
 \frac{Y_{\tim}}{\tim} \leq 
    \left(\sum_{n:  T_r(n) < \tim} \frac{\tilde F_r(n)}{\tim}\right)  + \left(\sum_{n:  T_c(n) < \tim}\frac{\tilde F_c(n)}{\tim} \right)+ \frac{2 K(\tim)}{\tim},\\
\]
and a similar calculation as above yields that for every $\epsilon' >0$ there are positive constants $a$ and $b$ such that the inequality
\begin{align}\label{ineqonYs}
 \frac{Y_{\tim}}{\tim} \leq 
    \frac 12 \cdot \frac{\left(1-\frac{4 \delta -4}{\delta }p_r^t\right) \left(1 - \frac{6}{\Delta} p_r^t\right)}{1- \left(\frac{4 \delta -4}{\delta }-\frac{2}{\Delta }\right)p_r^t} - \frac 12 \cdot \frac{\left(1-\frac{4 \delta -4}{\delta }p_c^t\right) \left(1 - \frac{4\Delta - 6}{\Delta} p_c^t\right)}{1- \left(\frac{4 \delta -4}{\delta }-\frac{2}{\Delta }\right)p_c^t} + \epsilon' + \frac{2 K(\tim)}{\tim}
\end{align}
is satisfied with probability $1-ae^{-b \tim}$. Since the distance between $\C_{\tim}'$ and $\R_{\tim}'$ is never negative, there is a constant lower bound on $Y_{\tim}$. Hence choosing $\epsilon'$ sufficiently small, we conclude that there is $\epsilon > 0$ such that almost surely
\begin{equation}
    \liminf \frac{K(\tim)}{\tim} \geq \epsilon. \label{eq:kdensity}
\end{equation}

We note that there are constants $p$ and $q$ such that 
\[
    \Prob[d(\C_{\tim},\C'_{\tim}) > d(\C_{\tim+1},\C'_{\tim+1}) \mid \C_{\tim} \neq \C'_{\tim}] \leq p < q \leq \Prob[d(\C_{\tim},\C'_{\tim}) < d(\C_{\tim+1},\C'_{\tim+1})\mid \C_{\tim} \neq \C'_{\tim}].
\]
Hence by Propositions \ref{prp:inhomogeneous_recurrent} and \ref{prp:occupationmeasure}, for every $\epsilon > 0$ there is some $n_0$ such that almost surely $\liminf \frac{1}{\tim} |\{\timtwo < \tim \colon d(\C_{\timtwo},\C'_{\timtwo}) \leq n_0\}| > 1-\epsilon$.
This, together with \eqref{eq:kdensity} implies that there is some $n_0$ for which 
\[  
    \Prob[\forall \tim \exists \timtwo > \tim \colon \C'_{\timtwo} = \R'_{\timtwo} \text{ and }d(\C_{\timtwo},\C'_{\timtwo}) \leq n_0] = 1.
\]

We hence need to analyse the game provided that the starting positions $(\C_0,\R_0) = (c,r)$ satisfy $\C'_{0} = \R'_{0}$ and $d(\C_{0},\C'_{0}) \leq n_0$. Denote by $N = \min\{\tim>0 \colon d(\R_{\tim}, \R_{\tim}') \leq n_0\}$ and note that $N$ is almost surely finite (because every move is at least as likely to decrease the distance between $\R_{\tim}$ and $\R_{\tim}'$ as it is to increase this distance).

We claim that there is a constant $p$ such that
\[
    \Prob_{(c,r)}[d(\C_N,\R'_N) \leq n_0]\geq p.
\]
To prove this claim note that cop moves and robber moves up to time $N$ are independent; sober moves of either players reduce the distance to $\R'_{\tim}$, which for $\tim < N$ only depends on the initial position. Let $D_{\tim} = d(\C_{\tim},\R_{0}')$. Note that we can find constants $p$ and $q$ such that conditional on $N \geq \tim$ we have
\[
\Prob_{(c,r)}[D_{\tim} =  D_{\tim-1} +1]\leq p < q \leq \Prob_{(c,r)}[D_{\tim} =  D_{\tim-1} -1].
\]
Because $D_{0} \leq n_0$, by Propositions \ref{prp:inhomogeneous_recurrent} and \ref{prp:occupationmeasure} there is a constant $p$ such that $\Prob_{(c,r)}[D_{\tim} \leq n_0] \geq p$ for every $\tim < N$. The value of $p$ does not depend on $r$, and we may assume (for instance by taking the minimum over all possible choices of $c$) that it does not depend on $c$ either. In particular $\Prob_{(c,r)}[D_{N} \leq n_0] \geq p$. 
Since 
\[
    \Prob_{(c,r)}[\exists \tim > N \colon \C'_{\tim} = \R'_{\tim} \text{ and }d(\C_{\tim},\C'_{\tim}) \leq n_0] =1,
\]
this implies that 
\[
    \Prob_{(c,r)}[\exists\tim \geq 0 \colon d(\C_{\tim},\R_{\tim}) \leq 2 n_0]=1,
\]
and thus 
\[
    \Prob[\forall \tim \exists\timtwo > \tim \colon d(\C_{\timtwo},\R_{\timtwo}) \leq 2 n_0]=1.
\]
Finally, every time $d(\C_{\tim},\R_{\tim}) < 2n_0$, the probability that the cop wins the game within the next $2n_0$ steps is at least $(p_c^t/\Delta)^{2n_0} >0$. Therefore the cop almost surely wins the game.
\end{proof}

\subsection{Comparison of the two strategies}

The aim of this section is to compare the two strategies \RSA\ and \RSB\ against one another. Throughout this section we will assume that $p_r^t < \frac 12$ because for $p_r^t = \frac 12$ all robber moves are random, and thus there will not be any difference between the two robber strategies.

We first show that (apart from isolated cases) we may assume that $0<p_c^t < \min \{ \frac{\delta}{4\delta-4},\frac{\Delta}{4\Delta-6} \}$ and that $0 < p_r^t < \frac{\delta}{4\delta-4}$. Proposition \ref{prp:coptootipsy} tells us that if $p_c^t > \min\{\frac{\delta}{4\delta-4},\frac{\Delta}{4\Delta-6}\}$ then the robber wins if they play each of the two strategies \RSA\ or \RSB. Our first result tells us that if $p_r^t > \frac{\delta}{4\delta-4}$ then the robber should always choose strategy \RSA\ over strategy \RSB.

\begin{proposition}
\label{prp:compare-drunkrobber}
If $p_r^t > \frac{\delta}{4\delta-4}$ and the cop has a winning strategy against a robber playing \RSA, then they also have a winning strategy against a robber playing \RSB; in this case \CS\ is winning against both \RSA\ and \RSB.
\end{proposition}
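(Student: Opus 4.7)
My plan is to translate the hypothesis into quantitative tipsiness bounds, show both players become confined to one small tree $T_0$, and catch the robber inside $T_0$ by coupling the cop-robber distance with a simple random walk on $\mathbb Z$. For the bounds: by the first part of Theorem~\ref{thm:robbersober}, the existence of a winning cop strategy against RSA forces $t_c \le \min\{t_r/(\delta-1),\,\Delta/(4\Delta-6)\}$. Together with $r \ge 0$ (hence $t_r \le 1/2$), this gives $t_c \le 1/(2(\delta-1)) \le \delta/(4(\delta-1))$, so $t_c$ also satisfies $t_c \le \min\{\delta/(4\delta-4),\,\Delta/(4\Delta-6)\}$. The case $t_r = 1/2$ is trivial since RSA and RSB then coincide, so I assume $t_r < 1/2$; the second part of Theorem~\ref{thm:robbersober} then already gives that CS is winning against RSA, leaving only the RSB claim.

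At an interior vertex of any small tree, the probability (conditional on a robber move) that the RSB robber increases their distance from the base tree $B$ equals $2t_r(\delta-1)/\delta$, which exceeds $1/2$ under $t_r > \delta/(4\delta-4)$. By Theorems~\ref{thm:gamblersruin} and~\ref{thm:transientonN} this projected walk is transient with linear drift away from $B$, so the RSB robber is almost surely confined to one small tree $T_0$ after finitely many steps. Since $t_c \le \min\{\delta/(4\delta-4),\,\Delta/(4\Delta-6)\}$, the argument from the second part of the proof of Theorem~\ref{thm:robbersober} applies essentially verbatim to show that the cop playing CS also almost surely ends up confined to the same $T_0$.

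Once both players are confined to $T_0$, I claim that at every time step the one-step probability that $d(\C_\tim,\R_\tim)$ increases is at most $1/2$. In the generic configuration (cop an ancestor of the robber in $T_0$, or cop and robber in distinct branches below their lowest common ancestor in $T_0$), this probability equals $(\delta-1)(t_c+t_r)/\delta$, which is at most $1/2$ because $t_c + t_r \le 1/(2(\delta-1)) + 1/2 = \delta/(2(\delta-1))$. In the exceptional configuration where the robber is an ancestor of the cop in $T_0$ (so that a sober RSB move by the robber actually increases $d$), this probability equals $1/2 - t_r + (\delta-1)(t_c+t_r)/\delta$, which is at most $1/2$ precisely because $t_c \le t_r/(\delta-1)$. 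Using a single uniform random variable per step to select both the next step of $d$ and of an auxiliary simple random walk $S_\tim$ on $\mathbb Z$ with $S_0 = d(\C_0,\R_0)$, I couple the two processes so that $d(\C_\tim,\R_\tim) \le S_\tim$ for all $\tim$ (the only asymmetric event, namely $d$ going up while $S$ goes down, is ruled out by the $1/2$ bound). Since $S_\tim$ almost surely visits every integer, in particular $-1$, and $d(\C_\tim,\R_\tim) \ge 0$, the process $d(\C_\tim,\R_\tim)$ must almost surely hit $0$, i.e., the cop almost surely catches the robber.

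The main obstacle is the configuration-by-configuration drift computation; in particular the exceptional configuration must be treated separately because there a sober RSB move by the robber points \emph{away} from the cop rather than toward it. Minor technicalities such as the robber temporarily visiting the root of $T_0$ (and hence briefly returning to $B$) or the cop briefly leaving $T_0$ after an unlucky tipsy move contribute only finitely many steps before the permanent-confinement regime is reached and can be absorbed into the standard localisation arguments already present in the proof of Theorem~\ref{thm:robbersober}.
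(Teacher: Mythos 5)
Your proposal is correct, but it takes a much longer and more self-contained route than the paper. The paper's proof is two lines: the hypothesis plus the first part of Theorem~\ref{thm:robbersober} force $t_c \leq \min\{\frac{t_r}{\delta-1},\frac{\Delta}{4\Delta-6}\}$, and the second part of Theorem~\ref{thm:robbersober} is then invoked directly to conclude that \CS\ beats \RSB\ when $t_r > \frac{\delta}{4\delta-4}$. (The printed statement of that theorem says ``provided that $t_r < \frac{\delta}{4\delta-4}$'' for \RSB, but its proof and the way it is cited here make clear the inequality is meant to be reversed, so that it covers exactly the regime of this proposition.) You instead re-derive that portion of Theorem~\ref{thm:robbersober} from scratch: the same deduction $t_c \leq \frac{t_r}{\delta-1} \leq \frac{\delta}{4\delta-4}$, the same confinement of both players to a single small tree $T_0$, but then an explicit configuration-by-configuration drift computation inside $T_0$ followed by a coupling with a simple random walk, where the paper simply appeals to Lemma~\ref{lemma:delta}. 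Your endgame analysis is actually a worthwhile clarification: Lemma~\ref{lemma:delta} is stated for a robber who always increases the distance to the cop, whereas an \RSB\ robber inside $T_0$ does so only when they are an ancestor of the cop; your two cases (generic configuration giving drift $\frac{(\delta-1)(t_c+t_r)}{\delta}\leq\frac12$, exceptional configuration giving $\frac12 - t_r + \frac{(\delta-1)(t_c+t_r)}{\delta}\leq\frac12$ exactly when $t_c \leq \frac{t_r}{\delta-1}$) make explicit why the reduction is legitimate, and both inequalities check out. Two small caveats: your handling of the steps at which a player sits at the root of $T_0$ after ``confinement'' is waved away rather than argued (conditioning on permanent confinement is a future event and formally alters the one-step transition probabilities), though the paper's own proof of Theorem~\ref{thm:robbersober} is no more careful on this point; and your dispatch of $t_r = \frac12$ settles the ``winning strategy against \RSB'' claim but not literally the ``\CS\ is winning'' claim in that boundary case --- a defect shared by the paper's citation, and in any event your drift bound covers a fully tipsy robber as well.
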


\begin{proof}
If the cop has a winning strategy against a robber playing \RSA, then by Theorem \ref{thm:robbersober} we know that $p_c^t \leq \min\{\frac{p_r^t}{\delta-1},\frac{\Delta}{4\Delta-6}\}$, which means that \CS\ is winning against \RSB\ provided that $p_r^t > \frac{\delta}{4\delta-4}$.
\end{proof}

Hence we can assume that $0<p_c^t \leq \min \{\frac{\delta}{4\delta-4},\frac{\Delta}{4\Delta-6} \}$, and that $0< p_r^t \leq \frac{\delta}{4\delta-4}$. The next proposition deals with the cases where equality holds in one of the two bounds.

\begin{proposition} 
\label{prp:compare-boundary}
Consider the game on $X(\Delta,\delta)$, and assume that $\Delta \geq 4$.
\begin{enumerate}
    \item If $p_c^t = 0$, then \CS\ is always a winning strategy for the cop.
    \item If $p_r^t = 0$ and $p_c^t \neq 0$, then both \RSA\ and \RSB\ are winning robber strategies.
    \item If $p_r^t = \frac{\delta}{4\delta-4}$ and \RSB\ is winning against every cop strategy, then so is \RSA.
\end{enumerate}
\end{proposition}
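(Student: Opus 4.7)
I plan to prove the three claims in sequence, each by specializing or combining results from earlier in the section.

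For (1), when $t_c = 0$ we have $c = 1/2$, so every cop move is sober; under \CS\ each such move reduces the distance $d(\C_\tim, \R_\tim)$ by exactly $1$, because on a tree there is a unique neighbor of $\C_\tim$ on the path to $\R_\tim$. Every robber move changes the distance by $\pm 1$, and a uniform random neighbor of $\R_\tim$ lies closer to the cop with conditional probability at least $1/\Delta$. Hence the per-round probability that the distance increases is bounded above by $r + t_r \cdot (\Delta-1)/\Delta = 1/2 - t_r/\Delta \leq 1/2$. A coupling of the distance process with a simple symmetric random walk, together with Proposition~\ref{prp:inhomogeneous_recurrent}, then forces the distance to hit $0$ almost surely, so \CS\ wins.

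For (2), I will invoke Theorems~\ref{thm:robbersober} and~\ref{thm:RSB} directly. At $t_r = 0$ the threshold $\min\{t_r/(\delta-1), \Delta/(4\Delta-6)\}$ in Theorem~\ref{thm:robbersober} vanishes and $t_c > 0$ gives that \RSA\ is winning. For \RSB, when $t_c > \min\{\delta/(4\delta-4), \Delta/(4\Delta-6)\}$ Proposition~\ref{prp:coptootipsy} applies; otherwise I verify~\eqref{second inequality for prop1} at $t_r = 0$, where its first summand equals $1$, and an elementary algebraic reduction using $\Delta > \delta$ shows that the second summand is strictly less than $1$ for every admissible $t_c > 0$, so Case~\ref{case one} of Theorem~\ref{thm:RSB} concludes.

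For (3), I fix $t_r = \delta/(4\delta-4)$ and split on $t_c$. If $t_c > \min\{\delta/(4\delta-4), \Delta/(4\Delta-6)\}$, Proposition~\ref{prp:coptootipsy} yields that \RSA\ wins. If $t_c < \min\{\delta/(4\delta-4), \Delta/(4\Delta-6)\}$, then Case~\ref{case three} of Theorem~\ref{thm:RSB} asserts that \CSB\ beats \RSB, contradicting the hypothesis that \RSB\ wins against every cop strategy. The remaining boundary $t_c = \min\{\delta/(4\delta-4), \Delta/(4\Delta-6)\}$ is handled by a further split on $\delta$: for $\delta = 2$, $t_r = 1/2$ forces $r = 0$, so sober robber moves never occur and \RSA, \RSB\ induce identical Markov chains, making the conclusion tautological; for $\delta \geq 3$, the comparisons $t_r/(\delta-1) = \delta/[4(\delta-1)^2] \leq 1/4 < \Delta/(4\Delta-6)$ together with $\delta/[4(\delta-1)] > \delta/[4(\delta-1)^2]$ give $\min\{t_r/(\delta-1), \Delta/(4\Delta-6)\} < t_c$ in both candidate cases for the outer minimum, so Theorem~\ref{thm:robbersober} yields \RSA\ winning.

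The main subtle step is the boundary sub-case in part~(3), which falls outside the reach of Theorem~\ref{thm:RSB} and must be handled by hand; the split on $\delta$ (with the degeneracy $r = 0$ covering $\delta = 2$ and the arithmetic comparison handling $\delta \geq 3$) is what makes it manageable.
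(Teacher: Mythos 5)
Your proof is correct, and part (1) matches the paper's argument: bound the per-round probability of a distance increase by $\tfrac12$ and invoke Proposition~\ref{prp:inhomogeneous_recurrent} via stochastic domination. You diverge in parts (2) and (3). For (2) the paper simply observes that with $t_r=0$ and $t_c\neq 0$ the distance increases with probability strictly greater than $\tfrac12$ and again applies Proposition~\ref{prp:inhomogeneous_recurrent}; you instead route \RSB\ through Theorem~\ref{thm:RSB} by checking that the left-hand side of \eqref{second inequality for prop1} is positive at $t_r=0$. That check does go through: the second summand is less than $1$ precisely when $\frac{4\delta-4}{\delta}\cdot\frac{4\Delta-6}{\Delta}\,t_c<\frac{4\Delta-4}{\Delta}$, which holds for all $t_c\le\frac{\delta}{4\delta-4}$. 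Your route is heavier machinery for the same conclusion, but it is arguably tighter for \RSB, since a sober \RSB\ move inside a small tree can move the robber \emph{toward} the cop, so the paper's one-line distance argument implicitly requires first conditioning on the robber reaching the base tree. For (3) the paper argues by contraposition: if \RSA\ is not winning, Theorem~\ref{thm:robbersober} forces $t_c\le\frac{\delta}{4(\delta-1)^2}<\frac14\le\min\bigl\{\frac{\delta}{4\delta-4},\frac{\Delta}{4\Delta-6}\bigr\}$, which lands strictly inside the region where Theorem~\ref{thm:RSB} shows \CSB\ beats \RSB\ --- this automatically sidesteps the boundary $t_c=\min\{\cdot\}$ that your trichotomy must treat by hand. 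Your handling of that boundary sub-case is nevertheless correct (and even proves the stronger unconditional statement that \RSA\ wins there); note only that the section's standing assumption $t_r<\tfrac12$ already excludes $\delta=2$ when $t_r=\frac{\delta}{4\delta-4}$, so your degenerate $r=0$ sub-case is not actually needed.
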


\begin{proof}
The first two statements follow from Proposition \ref{prp:inhomogeneous_recurrent} by observing that the probability that the distance between the cop and the robber increases is at most $\frac 12$ in the first case, and at least $\frac 12 + \epsilon$ for some $\epsilon > 0$ in the second. For the third statement, recall that we are working under the assumption that $p_r^t< \frac 12$ and hence $\delta \geq 3$. If \RSA\ is not winning, then $p_c^t \leq \frac{\delta}{4(\delta-1)^2}$ by Theorem \ref{thm:robbersober}. In particular, $p_c^t < \frac 14 \leq \min \{ \frac{\delta}{4\delta-4}, \frac{\Delta}{4 \Delta-6} \}$ and thus \RSB\ is not winning against \CSB\ by Theorem \ref{thm:RSB}.
%The last statement is a direct consequence of Theorems \ref{thm:robbersober} and  \ref{thm:RSB}.
\end{proof}

The next lemma puts the winning conditions from Theorem \ref{thm:RSB} for a robber playing \RSB\ in a more useful form.

\begin{lemma}
\label{lem:threshold-function}Let $\Delta\geq 4$.
There is a function 
\[f\colon \left[0,\frac{\delta}{4\delta-4}\right] \to \left[0, \min \left \{ \frac{\delta}{4\delta-4},\frac{\Delta}{4\Delta-6}\right \} \right]
\] 
such that \RSB\ is winning against any cop strategy if $f(p_r^t) < p_c^t \leq \min\{ \frac{\delta}{4\delta-4},\frac{\Delta}{4\Delta-6}\}$, and \CSB\ is winning against \RSB\ if $0\leq p_c^t < f(p_r^t)$.

This function is monotonically increasing and strictly convex, $f(0)=0$, $f(\frac{\delta}{4\delta-4}) =  \min \left\{ \frac{\delta}{4\delta-4},\frac{\Delta}{4\Delta-6} \right \}$, and $f'(0) = \frac{2}{\Delta-1}$.
\end{lemma}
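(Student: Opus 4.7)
The plan is to define $f$ implicitly via the threshold inequality of Theorem~\ref{thm:RSB}. Write $A = \frac{4\delta-4}{\delta}$, $C = A - \frac{2}{\Delta}$, $B_r = \frac{6}{\Delta}$, and $B_c = \frac{4\Delta-6}{\Delta}$, and introduce the rational functions
\[
    h_r(t) := \frac{(1-At)(1-B_r t)}{1-Ct}, \qquad h_c(t) := \frac{(1-At)(1-B_c t)}{1-Ct}.
\]
With this notation, the inequalities \eqref{second inequality for prop1} and \eqref{inequality for prop1} become $h_r(t_r) > h_c(t_c)$ and $h_r(t_r) < h_c(t_c)$, respectively. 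Note that $h_r(0)=h_c(0)=1$, that $h_r\bigl(\frac{\delta}{4\delta-4}\bigr)=0$, that $h_c$ vanishes at $\min\bigl\{\frac{\delta}{4\delta-4},\frac{\Delta}{4\Delta-6}\bigr\}$, and that both functions are non-negative on their respective intervals (since $1-Ct>0$ there). Once strict monotonicity of both $h_r$ and $h_c$ is established, the function $f(t_r) := h_c^{-1}(h_r(t_r))$ is well-defined, monotonically increasing, and has the prescribed boundary values $f(0)=0$ and $f\bigl(\frac{\delta}{4\delta-4}\bigr)=\min\bigl\{\frac{\delta}{4\delta-4},\frac{\Delta}{4\Delta-6}\bigr\}$.

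A direct computation yields, for $h$ of the above form with $B\in\{B_r,B_c\}$, the closed-form expression
\[
    h''(t) = \frac{2(A-C)(B-C)}{(1-Ct)^3}.
\]
Since $A-C=\frac{2}{\Delta}>0$ and $B_c-C = \frac{4}{\delta}-\frac{4}{\Delta}\geq 0$ with equality iff $\Delta=\delta$, the function $h_c$ is convex on its domain. The quantity $B_r-C=\frac{8}{\Delta}-A$ is non-positive precisely when $\frac{2}{\Delta}+\frac{1}{\delta}\leq 1$, which for $\delta\geq 2$ holds whenever $\Delta\geq 4$, with equality only at $(\Delta,\delta)=(4,2)$; hence $h_r$ is concave. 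Strict monotonicity follows easily: $h_r'(0)=C-A-B_r=-\frac{8}{\Delta}<0$ together with concavity forces $h_r'<0$ throughout; and convexity of $h_c$ together with the boundary values $h_c(0)=1>h_c(\text{endpoint})=0$ and positivity on the interior rules out any local increase, whence $h_c$ is strictly decreasing.

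For strict convexity of $f$ and the value of $f'(0)$, I will differentiate the relation $h_c(f(t_r))=h_r(t_r)$ twice. The first derivative gives $f'(t_r)=h_r'(t_r)/h_c'(f(t_r))>0$, since both $h_r'$ and $h_c'$ are negative. Differentiating once more,
\[
    f''(t_r) = \frac{h_r''(t_r) - h_c''(f(t_r))\,(f'(t_r))^2}{h_c'(f(t_r))}.
\]
The denominator is negative. In the generic case $\Delta>\delta$ with $(\Delta,\delta)\neq(4,2)$, both $h_r''<0$ and $h_c''(f)>0$ strictly, so the numerator is strictly negative; in the boundary case $(\Delta,\delta)=(4,2)$, $h_r$ is affine but $h_c''>0$ suffices; and in the boundary case $\Delta=\delta$, which together with $\Delta\geq 4$ gives $B_r-C=\frac{12}{\Delta}-4<0$, $h_c$ is affine but $h_r''<0$ suffices. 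Thus $f''>0$ in all cases. Finally, evaluating at $t_r=0$,
\[
    f'(0) = \frac{h_r'(0)}{h_c'(0)} = \frac{-8/\Delta}{-4(\Delta-1)/\Delta} = \frac{2}{\Delta-1},
\]
using that $h'(0)=C-A-B$ for each of the two functions.

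The main obstacle will be handling the two borderline configurations, namely $(\Delta,\delta)=(4,2)$ (where $h_r$ is affine) and $\Delta=\delta$ (where $h_c$ is affine), so that strict convexity of $f$ is preserved in each. Once the clean expression for $h''$ is in hand, however, the entire argument reduces to elementary sign-checking and a routine application of implicit differentiation.
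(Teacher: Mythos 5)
Your proof is correct and follows essentially the same route as the paper: both define $f$ implicitly by the threshold equation from Theorem~\ref{thm:RSB}, both use the closed form $\frac{2(a-c)(b-c)}{(1-cx)^3}$ for the second derivative of $\frac{(1-ax)(1-bx)}{1-cx}$, and both obtain monotonicity and strict convexity by implicit differentiation, with the same attention to the borderline case $(\Delta,\delta)=(4,2)$. As a minor remark, your sign bookkeeping ($h_r'<0$, $h_c'<0$, $h_r''\leq 0$, $h_c''\geq 0$) is the correct one; the paper asserts the opposite signs for the corresponding partial derivatives of $F$, but since those slips occur in matched pairs they cancel in the expressions for $f'$ and $f''$ and do not affect the conclusion.
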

\begin{proof}
If we let $G(x) = \frac{(1-ax)(1-bx)}{(1-cx)}$ then $G'(x) = -\frac{b (1-a x)}{1-c x}-\frac{(a-c) (1-b x)}{(1-c x)^2}$ and $G''(x) = \frac{2 (a-c) (b-c)}{(1-c x)^3}$ provided that $x \neq 1/c$. 

For fixed $x$, $\delta$, and $\Delta$ this implies that the expression
\[
F(x,y) = \frac{\left(1-\frac{4 \delta -4}{\delta }x\right) \left(1 - \frac{6}{\Delta} x\right)}{1- \left(\frac{4 \delta -4}{\delta }-\frac{2}{\Delta }\right)x} -  \frac{\left(1-\frac{4 \delta -4}{\delta }y\right) \left(1 - \frac{4\Delta - 6}{\Delta} y\right)}{1- \left(\frac{4 \delta -4}{\delta }-\frac{2}{\Delta }\right)y}
\]
is strictly increasing for $y$ in the interval $\left [0,\min \left \{ \frac{\delta}{4\delta-4}, \frac{\Delta}{4\Delta-6} \right \} \right ]$. It is not hard to check that $F(x,0) \leq 0$ and $F\left(x, \min \left \{ \frac{\delta}{4\delta-4}, \frac{\Delta}{4\Delta-6} \right \}\right) \geq 0$ for $x \in \left[0,\frac{\delta}{4\delta-4}\right]$. Hence the equation $F(x,f(x)) = 0$ defines a unique function $f(x)$. By Theorem \ref{thm:RSB} this function has the property that if $f(p_r^t) < p_c^t \leq \min\{ \frac{\delta}{4\delta-4},\frac{\Delta}{4\Delta-6}\}$, then \RSB\ is winning against any cop strategy, and if $0\leq p_c^t < f(p_r^t)$, then \CSB\ is winning against \RSB.

To see that $f$ is monotonically increasing, recall that 
\[
    f'(x_0) = -\frac{F_x}{F_y},
\]
where the partial derivatives are evaluated at the point $(x_0,f(x_0))$.
The partial derivatives have the form $\pm G'$ from above for suitable values of $a$, $b$, and $c$ (depending on $\delta$ and $\Delta$), and it is not hard to see that $F_x(x_0,y_0)$ is positive and $F_y(x_0,y_0)$ is negative for all pairs 
\[(x_0,y_0) \in \left(0,\frac{\delta}{4\delta-4}\right) \times \left(0, \min \left \{ \frac{\delta}{4\delta-4},\frac{\Delta}{4\Delta-6}\right \} \right).
\]

To show convexity, recall that
\[ 
    f''(x_0) = \frac{-F_y^2F_{xx} + 2 F_xF_y F_{xy} -F_x^2F_{yy}}{F_y^3}.
\]
Note that the mixed second derivative $F_{xy}$ is always zero and that the denominator $F_y^3$ is always negative. Moreover $F_{xx}$ and $F_{yy}$ have the form $\pm G''$ from above for suitable values of $a$, $b$, and $c$, and again it is not hard to see that for all values of $(x_0,y_0)$ in our range we have $F_{xx}(x_0,y_0) \geq 0$ (with equality if $\delta = 2$ and $\Delta = 4$, here we are using the assumption that $\Delta \geq 4$),  and $F_{yy}(x_0,y_0) > 0$.

Finally note that 
\[
F(0,0) = F\left(\frac{\delta}{4\delta-4}, \min \left \{ \frac{\delta}{4\delta-4},\frac{\Delta}{4\Delta-6}\right \} \right) = 0,
\]
and thus $f(0)=0$, and $f(\frac{\delta}{4\delta-4}) =  \min \left \{ \frac{\delta}{4\delta-4},\frac{\Delta}{4\Delta-6} \right \} $ as claimed. Moreover, evaluating $f'(x_0) =  -\frac{F_x}{F_y}$ at $x_0 = f(x_0) = 0$ gives $\frac{2}{\Delta-1}$.
\end{proof}

We are now ready to prove Theorem \ref{theorem:strategy_summary}. Before we do so, let us recall the statement of this theorem.

\begingroup
\def\thetheorem{\ref{theorem:strategy_summary}}
\begin{theorem}
Consider the game on $X(\Delta,\delta)$ with $\Delta \geq 4$ and assume that $p_c^s+p_c^t = p_r^s + p_r^t= 1/2$. There is some $p_0 \in [0,\frac{\delta}{4(\delta-1)}]$ such that
\begin{enumerate}
    \item if $p_r^t \leq p_0$ and \RSA\ is winning against every cop strategy for some fixed $p_c^t$, then so is \RSB, and
    \item if $p_r^t > p_0$ and \RSB\ is winning against every cop strategy for some fixed $p_c^t$, then so is \RSA.
\end{enumerate}
If $\delta=2$, then $p_0 = \frac 12$. If $2 \delta \geq \Delta+1$, then $p_0 = 0$. Otherwise the value of $p_0$ lies strictly between $0$ and $\frac 12$ and can be determined by solving a quadratic equation whose coefficients depend on $\delta$ and $\Delta$.
\end{theorem}
\addtocounter{theorem}{-1}
\endgroup

\begin{proof}
First note that if $p_r^t=0$, then by Proposition \ref{prp:compare-boundary} the strategies \RSA\ and \RSB\ are both winning against any cop strategy for $p_c^t > 0$ and both losing against \CS\ for $p_c^t = 0$. If $p_r^t > \frac{\delta}{4\delta-4}$ and \RSB\ is winning for a given value $p_c^t$, then by Proposition \ref{prp:compare-drunkrobber} so is \RSA. In particular, if the theorem is true for $p_r^t \in (0,\frac{\delta}{4\delta-4}]$, then it is true for $p_r^t \in [0,\frac 12]$.

Throughout the rest of this proof, assume that $0 < p_r^t \leq \frac{\delta}{4\delta-4}$. Denote by $f(x)$ the function provided by Lemma \ref{lem:threshold-function}, and let $g(x) = \frac{x}{\delta-1}$. Observe that $f(0) = g(0) = 0$. By Theorem \ref{thm:robbersober}, \RSA\ is winning against every possible cop strategy if and only if $p_c^t > g(p_r^t)$, otherwise it loses against \CS. By Lemma \ref{lem:threshold-function}, \RSB\ is winning against every cop strategy if $p_c^t > f(p_r^t)$ and losing against \CSB\ if $p_c^t < f(p_r^t)$.

If $\delta=2$, then $\frac{\delta}{4\delta-4} = \frac 12$. By Lemma \ref{lem:threshold-function} we have that $f(\frac 12) = \min \{ \frac 12, \frac{\Delta}{4\Delta-6} \}$ and since $\Delta \geq 4$ this is strictly less than $g(\frac 12) = \frac 12$. By convexity of $f$ this implies that $f(x) < g(x)$ for $x \in (0,\frac 12)$ and thus if \RSA\ is winning against every cop strategy for a given value $p_c^t$, then so is \RSB.

Now assume that $\delta \geq 3$. In this case $f(\frac{\delta}{4\delta-4}) > \frac 14 \geq g(\frac{\delta}{4\delta-4})$. If $2\delta \geq \Delta +1$, then $f'(0) \geq g'(0)$ and strict convexity of $f$ implies that $f(x) > g(x)$ for every $x \in (0,\frac{\delta}{4\delta-4}]$. This in particular implies that if $p_r^t > 0$ and \RSB\ is winning against every cop strategy for a given value $p_c^t$, then so is \RSA. Finally, if $2\delta < \Delta +1$, then $f'(0) < g'(0)$, and consequently there is a unique $t_0 \in (0,\frac{\delta}{4\delta-4}]$ such that $f(x) \leq g(x)$ for $x \leq t_0$ and $f(x) > g(x)$ for $x > t_0$. This implies that if $p_r^t \leq t_0$ and \RSA\ is winning against every cop strategy for a given value $p_c^t$, then so is \RSB, and if $p_r^t > t_0$ and \RSB\ is winning against every cop strategy for a given value $p_c^t$, then so is \RSA.

Recall that $f(x)$ is the unique solution to the equation $F(x,f(x)) = 0$, where 
\[
F(x,y) = \frac{\left(1-\frac{4 \delta -4}{\delta }x\right) \left(1 - \frac{6}{\Delta} x\right)}{1- \left(\frac{4 \delta -4}{\delta }-\frac{2}{\Delta }\right)x} -  \frac{\left(1-\frac{4 \delta -4}{\delta }y\right) \left(1 - \frac{4\Delta - 6}{\Delta} y\right)}{1- \left(\frac{4 \delta -4}{\delta }-\frac{2}{\Delta }\right)y}.
\]
Since $t_0$ is a solution of $f(x) = g(x)$, we can find its value by solving the equation $F(x,\frac{x}{\delta-1}) = 0$ for $x$, which gives a quadratic equation in $x$ as claimed.
\end{proof}

In Figure \ref{examples for theorem 4.2} we compare 
the efficacy of the robber's dualing strategies on $X(\Delta, \delta)$ for $\Delta = 7$ and $\delta \in \{2,3,4,5\}$. The dashed lines are at $p_c^t = \frac{\delta}{4\delta-4}$ and $p_r^t = \frac{\delta}{4\delta-4}$, respectively; the dotted line is at $p_c^t = \frac{\Delta}{4\Delta-6}$. The red and blue lines are the graphs of the functions $f$ and $g$ from the proof of Theorem \ref{theorem:strategy_summary}; if for $p_r^t < \frac{\delta}{4\delta-4}$ the point $(p_r^t,p_c^t)$ lies above the blue line, then strategy \RSA\ is winning against every cop strategy, and if it lies above the red line \RSB\ is winning against every cop strategy.

\begin{figure}[h!]
\centering
\begin{tikzpicture}[scale=.8]
\begin{axis}
[
  title ={$\Delta = 7$, $\delta = 2$},
  axis lines*=center,
  xtick={0,.1,.2,...,.5},
  ytick={0,.1,.2,...,.5},
  tick align=outside,
  yticklabel style={rotate=90},
  xlabel={$p_r^t$},
  x label style={at={(axis description cs:0.5,0)}},
  y label style={at={(axis description cs:.05,.5)}},
  ylabel={$p_c^t$},
  ymax=.5, ymin=0,
  xmax=.5, xmin=0
]
\addplot[dashed, samples=2,black] coordinates {(.5,0)(.5,.5)};
\addplot[dashed, samples=2,black] coordinates {(0,.5)(.5,.5)};
\addplot[dotted, samples=2,black] coordinates {(0,7/22)(7/22,7/22)};
\addplot [
    domain=0:0.5, 
    samples=100, 
    color=red,
]
{(-21 + 24*x + 18*x^2+sqrt(441-2086*x+3285*x^2-1908*x^3+324*x^4))/(11*(-7+12*x))};
\addplot [
    domain=0:7/22, 
    samples=2, 
    color=blue,
]
{x};
\addplot [
    domain=7/22:.5, 
    samples=2, 
    color=blue,
]
{7/22};
\end{axis}
\end{tikzpicture}%
\qquad
\begin{tikzpicture}[scale=.8]
\begin{axis}
[
  title ={$\Delta = 7$, $\delta = 3$},
  axis lines*=center,
  xtick={0,.1,.2,...,.5},
  ytick={0,.1,.2,...,.5},
  tick align=outside,
  yticklabel style={rotate=90},
  xlabel={$p_r^t$},
  x label style={at={(axis description cs:0.5,0)}},
  y label style={at={(axis description cs:.05,.5)}},
  ylabel={$p_c^t$},
  ymax=.5, ymin=0,
  xmax=.5, xmin=0
]
\addplot[dashed, samples=2,black] coordinates {(3/8,0)(3/8,.5)};
\addplot[dashed, samples=2,black] coordinates {(0,3/8)(.5,3/8)};
\addplot[dotted, samples=2,black] coordinates {(0,7/22)(.5,7/22)};
\addplot [
    domain=0:3/8, 
    samples=100, 
    color=red,
]
{(3*(-63+100*x+100*x^2+sqrt(3969-25536*x+54072*x^2-41600*x^3+10000*x^4)))/(44*(-21+50*x))};

{7/22};
\addplot [
    domain=0:0.5, 
    samples=100, 
    color=blue,
]
{x/2};
\end{axis}
\end{tikzpicture}

\vspace{2ex}
\begin{tikzpicture}[scale=.8]
\begin{axis}
[
  title ={$\Delta = 7$, $\delta = 4$},
  axis lines*=center,
  xtick={0,.1,.2,...,.5},
  ytick={0,.1,.2,...,.5},
  tick align=outside,
  yticklabel style={rotate=90},
  xlabel={$p_r^t$},
  x label style={at={(axis description cs:0.5,0)}},
  y label style={at={(axis description cs:.05,.5)}},
  ylabel={$p_c^t$},
  ymax=.5, ymin=0,
  xmax=.5, xmin=0
]
\addplot[dashed, samples=2,black] coordinates {(1/3,0)(1/3,.5)};
\addplot[dashed, samples=2,black] coordinates {(0,1/3)(.5,1/3)};
\addplot[dotted, samples=2,black] coordinates {(0,7/22)(.5,7/22)};
\addplot [
    domain=0:1/3, 
    samples=100, 
    color=red,
]
{(-84+152*x+171*x^2+sqrt(7056-51408*x+122812*x^2-106020*x^3+29241*x^4))/(2*(-231+627*x))};

\addplot [
    domain=0:0.5, 
    samples=100, 
    color=blue,
]
{x/3};
\end{axis}
\end{tikzpicture}%
\qquad
\begin{tikzpicture}[scale=.8]
\begin{axis}
[
  title ={$\Delta = 7$, $\delta = 5$},
  axis lines*=center,
  xtick={0,.1,.2,...,.5},
  ytick={0,.1,.2,...,.5},
  tick align=outside,
  yticklabel style={rotate=90},
  xlabel={$p_r^t$},
  x label style={at={(axis description cs:0.5,0)}},
  y label style={at={(axis description cs:.05,.5)}},
  ylabel={$p_c^t$},
  ymax=.5, ymin=0,
  xmax=.5, xmin=0
]
\addplot[dashed, samples=2,black] coordinates {(5/16,0)(5/16,.5)};
\addplot[dashed, samples=2,black] coordinates {(0,5/16)(.5,5/16)};
\addplot[dotted, samples=2,black] coordinates {(0,7/22)(.5,7/22)};
\addplot [
    domain=0:5/16, 
    samples=100, 
    color=red,
]
{(-525 + 1020*x + 1224*x^2 + sqrt(
 275625 - 2149000*x + 5484000*x^2 - 5042880*x^3 + 
  1498176*x^4))/(88*(-35 + 102*x))};
\addplot [
    domain=0:0.5, 
    samples=100, 
    color=blue,
]
{x/4};
\end{axis}
\end{tikzpicture}
\caption{
Comparing the efficacy of the robber's dualing strategies on $X(\Delta, \delta)$ for $\Delta = 7$ and $\delta \in \{2,3,4,5\}$.
% The dashed lines are at $p_c^t = \frac{\delta}{4\delta-4}$ and $p_r^t = \frac{\delta}{4\delta-4}$, respectively; the dotted line is at $p_c^t = \frac{\Delta}{4\Delta-6}$. The red and blue lines are the graphs of the functions $f$ and $g$ from the proof of Theorem \ref{theorem:strategy_summary}; if for $p_r^t < \frac{\delta}{4\delta-4}$ the point $(p_r^t,p_c^t)$ lies above the blue line, then strategy \RSA\ is winning against every cop strategy, and if it lies above the red line \RSB\ is winning against every cop strategy.
}
\label{examples for theorem 4.2}
\end{figure}
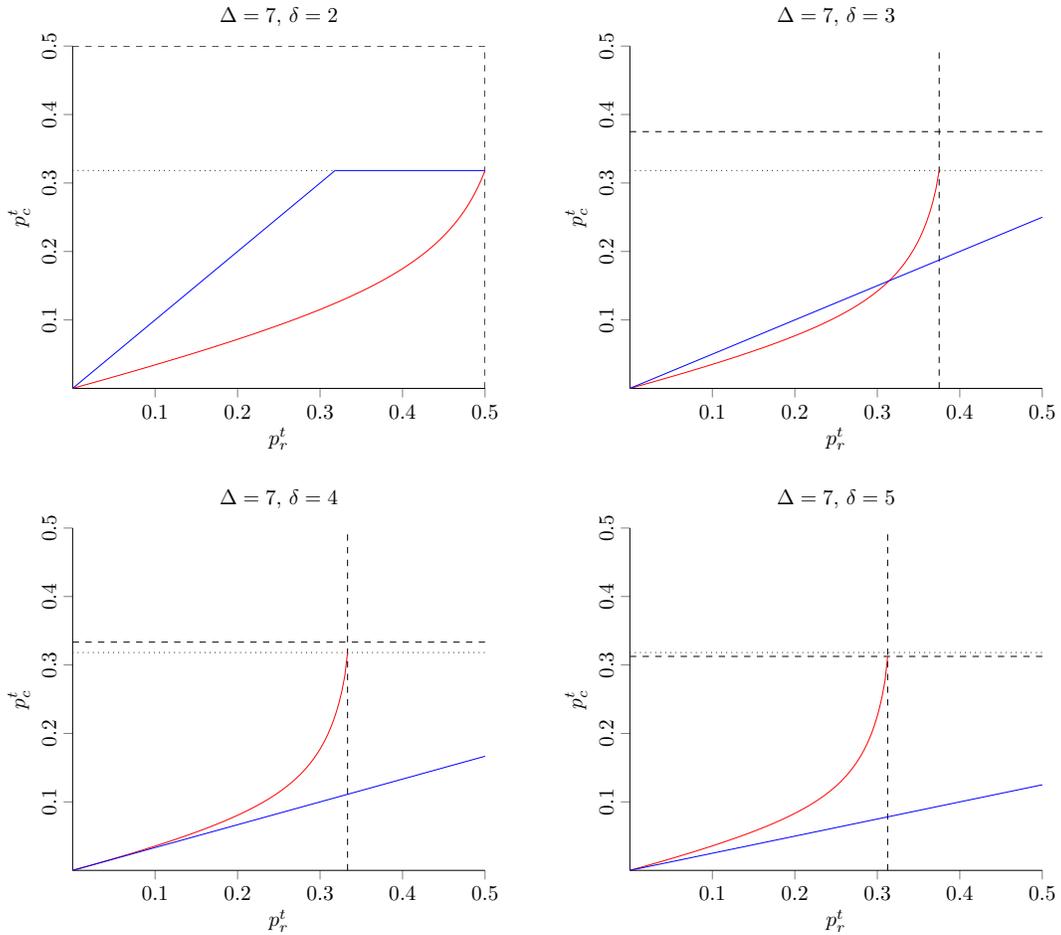
 
\section{Future Directions and Open Problems}\label{section:future}

It seems intuitively clear that on any tree, the cop's optimal strategy is to always move in the direction of the robber, but we were unable to provide a proof nor could we find a counterexample through the tools we used in our arguments.  Hence we pose the following problems, which might be solved via a different perspective or with a different set of mathematical tools than those used in this manuscript.

\begin{problem} \label{prob:51}
Prove or provide a counterexample to the following statement:  On any tree, the cop's optimal strategy is to always move in the direction of the robber.
\end{problem}

\begin{problem} \label{prob:52}
 Prove or provide a counterexample to the following statement: On any graph, if the cop has a winning strategy, then they also have a winning strategy which is ``monotone'' in the sense that the cop never purposefully increases the distance between them and the robber. 
\end{problem}

We conjecture that the statement in Problem~\ref{prob:51} is true, and there is likely a counterexample to the statement in Problem~\ref{prob:52}.

On the grid graph, we have shown that the cop's choice of strategy has an impact on whether they win when $c>r$, but that if the robber employs any strategy that increases the distance between them and the cop, then they are equally likely to win when $r>c$.  On the $X(\Delta,\delta)$ trees, the cop's intuitive strategy is to decrease the distance between them and the robber in each step, but the robber's strategy depends on conditions relating the proportion of sober cop and robber moves to $\Delta$ and $\delta$.
These examples lead us to ask, whether there are any graphs that restrict the strategies of both players at the same time. 

\begin{problem} Are there graphs where the cop and robber each have multiple strategies they could employ where each of these strategies is clearly better depending on specific conditions on $c$ and $r$?
\end{problem}

Recently, Songsuwan, Jiarasuksakun, Tanghanawatsakul, and Kaemawichanurat calculated the expected capture time of the game on the $n$-dimensional grid graph with the robber's tipsiness being $1$ and the cop's tipsiness being $0$ \cite{JKST21}. We think that it might be possible to refine our proof techniques to calculate the expected capture times for other tipsiness parameters as well.

\begin{problem}
Calculate the expected capture time on $n$-dimensional grid graphs for other tipsiness values.
\end{problem}

In this paper, we have studied one of several interesting notions of tipsiness. A particularly interesting generalization is a model where the cop and the robber become `more aware' of each others' presence when their distance is small, that is, they are more likely to play according to their strategies when the opposing player is nearby.

\begin{problem}
Study models where the tipsiness parameters increase with the distance between the cop and the robber. What can be said about optimal strategies and capture times? Do the results depend on how quickly the tipsiness parameters increase (compared to the dimension of the grid or to one another)?  
\end{problem}

 In this manuscript we focus on infinite graphs, but there are also many interesting questions regarding the game on finite graphs. A very natural question to ask is the ``cost of drunkenness": how does the capture time depend on the tipsiness of either player?
\begin{problem}
 On finite graphs, by how much can the expected capture time differ from the capture time in the deterministic game? What is the best strategy for a sober robber against a drunk cop on a finite tree, for instance a full binary tree of depth $k$? What about other tipsiness parameters for both players?
\end{problem}

\bibliography{Bibliography}
\bibliographystyle{plain}

\end{document}